\theoremstyle{plain}
\newtheorem{lemma}{Lemma}[section]
\newtheorem{thm}[lemma]{Theorem}
\newtheorem{remark}[lemma]{Remark}
\newtheorem{prop}[lemma]{Proposition}
\newtheorem{assumption}{Assumption}
\newcommand{\bbR}{\mathbb R}
\begin{document}
\title[Asymptotic error distributions of symplectic methods]{Symplectic methods for stochastic Hamiltonian systems: asymptotic error distributions and Hamiltonian-specific analysis}
\author{Chuchu Chen, Xinyu Chen, Jialin Hong, Yuqian Miao*}
\address{State Key Laboratory of Mathematical Sciences, Academy of Mathematics and Systems Science, Chinese Academy of Sciences, Beijing 100190, China,
\and 
School of Mathematical Sciences, University of Chinese Academy of Sciences, Beijing 100049, China}
\email{chenchuchu@lsec.cc.ac.cn; chenxinyu@amss.ac.cn; hjl@lsec.cc.ac.cn; miaoyuqian@lsec.cc.ac.cn}
\thanks{This work is funded by the National Key R\&D Program of China under Grant (No. 2024YFA1015900),  by the National Natural Science Foundation of China (No. 12031020, No. 12461160278, and No. 12471386), and by Youth Innovation Promotion Association CAS}
\thanks{*Corresponding author.}
\begin{abstract}
In this paper, we investigate the asymptotic error distributions of symplectic methods for stochastic Hamiltonian systems and further provide Hamiltonian-specific analysis
that clarifies the superiority of symplectic methods.
Our contribution is threefold. First, we derive the asymptotic error distributions of symplectic methods for stochastic Hamiltonian systems with multiplicative noise and additive noise, respectively,
and show that the obtained limiting stochastic processes satisfy equations retaining the Hamiltonian formulations.
Second, we propose a new approach for calculating the asymptotic error distribution, revealing the connection between the stochastic modified equation and the asymptotic error distribution. 
Third, we 
characterize the limiting distribution of the normalized Hamiltonian deviation, thereby illustrating through test equations the superiority of symplectic methods for long-time simulations of the Hamiltonians, even in the limit as the step size tends to zero.
\end{abstract}
\keywords {Stochastic Hamiltonian system $\cdot$ Symplectic methods $\cdot$ Asymptotic error distribution $\cdot$ Stochastic modified equation $\cdot$ Hamiltonian deviation}
\maketitle
\section{Introduction}
Error analysis plays a crucial role in evaluating the accuracy and reliability of stochastic numerical methods, providing insight into how these methods approximate solutions and capture the intrinsic dynamics of stochastic systems.
The asymptotic error distribution, serving as a probabilistic limit theorem, characterizes the distributional behavior of the normalized error process $U^n:=n^p(X^n-X)$ as the discretization parameter $n$ tends to infinity, where $p$ denotes the strong convergence order of the numerical solution $X^n$ to the exact solution $X$. 
The study of the asymptotic error distribution can be traced back to \cite{KurtzProtter1991b}, which establishes the result for the Euler method applied to stochastic differential equations (SDEs) with bounded coefficients. Significant progress has been made on the asymptotic error distribution of explicit Euler-type methods for various stochastic systems (see e.g. \cite{Fukasawa2023,Hong2024spde,HuLiu2016,JacodProtter1998,David2023,Protter2020}). 
In addition, implicit methods with better stability also attract attention in the study of the asymptotic error distribution. 
Authors in \cite{Huyaozhong2023} derive the limiting distribution of $U^n(t)$ for any $t \in [0, T]$ in $\mathbb R^d$ for the backward Euler method applied to SDEs driven by additive fractional Brownian motion, while authors in \cite{Sheng2025} obtain the corresponding result for the $\theta$-method in the context of stochastic Hamiltonian systems with additive noise. Furthermore, this limiting distribution for stochastic Runge--Kutta methods applied to SDEs is given in \cite{jin2025asymptotic}.
Besides, the Crank–Nicolson method for $1$-dimensional SDEs driven by fractional Brownian motion is studied, with the limiting distribution obtained in the Skorohod space $\mathcal D([0,T];\mathbb R)$ (see e.g. \cite{Naganuma2015,Ueda2025}). 

As an important class of stochastic systems, the stochastic Hamiltonian system (SHS) provides a natural stochastic generalization of classical mechanics that reconciles the Hamiltonian structure with the nondifferentiability of Brownian motion and offers applications across chemistry, physics, and engineering. In this paper, we investigate the asymptotic error distributions of symplectic methods for SHSs and further provide Hamiltonian-specific analysis
that clarifies their superiority. This class of numerical methods, which preserves the symplectic structure of the underlying SHS, was pioneered by Milstein et al. (see \cite{Milstein2002, Milstein20022}) and has been further developed over the past decades (see, e.g., \cite{Hongsun2022, Hongwang2019, Milstein2021}). To be specific, we consider the following $2d$-dimensional SHS
\begin{align*} 
    d \begin{pmatrix} P_t \\ Q_t
\end{pmatrix}=\begin{pmatrix}
    0 & I_d \\
    -I_d & 0
\end{pmatrix}DH(P_t,Q_t)dt +\begin{pmatrix}
    0 & I_d \\
    -I_d & 0
\end{pmatrix}D\bar{H}(P_t,Q_t) \circ dW_t
\end{align*}
for $t\in[0,T]$ with initial value $(P_0,Q_0)\in \mathbb{R}^d\times \mathbb{R}^d$. Here, $H$ and $\bar{H}:\mathbb{R}^{2d}\to \mathbb{R}$ are Hamiltonians and $W$ is a 1-dimensional
Brownian motion defined on a complete filtered probability space $(\Omega,\mathcal{F},\{\mathcal{F}_t\}_{t\in[0,T]},\mathbb{P})$.

The first main result of our work is the asymptotic error distribution of a class of symplectic methods applied to SHSs with multiplicative and additive noise, respectively (see \Cref{mainthmmulti} and \Cref{mainthmadd}). To be specific, we obtain the limiting distribution of $U^n$ in the sense of stable convergence in $\mathcal C([0,T]; \mathbb R^{2d})$.
The key step in achieving this is to identify a suitable subspace $\Omega^n \subset \Omega$ such that the continuous numerical solution $(P^n_t,Q^n_t)$ admits an explicit representation on $\Omega^n$, while $\mathbb{P}((\Omega^n)^c)$ can be proved to decay exponentially. 
This construction effectively overcomes the difficulties arising from the non-adaptiveness of the integrands involved in the typical implicit representation, which enables the application of weak limit theorems for solutions to stochastic differential equations, leading to the derivation of the asymptotic error distribution.
Our proposed technique provides an effective framework for obtaining the asymptotic error distribution, which can be adapted to general implicit methods for stochastic differential equations. 
In addition, the intrinsic geometric structure of the SHS is also captured by our results, with the equations governing the obtained asymptotic error distributions retaining Hamiltonian formulations (see \Cref{UisHamiltonian} and \Cref{UisHamiltonianadd}).
As the truncated stochastic modified equation provides a higher-order approximation to the numerical method,
it is natural to ask whether the asymptotic error distribution of a numerical method can be derived via a suitable truncated stochastic modified equation.
We construct such a truncated stochastic modified equation in the strong convergence sense inspired by \cite{Deng2016} and provide a positive answer to this question (see \Cref{modifiedthm} and \Cref{modifiedthmadd}). This constitutes a new approach to deriving the asymptotic error distribution and reveals the connection between the stochastic modified equation and the asymptotic error distribution. Moreover, our results indicate that this approach is more straightforward, as the truncated stochastic modified equation is formulated in terms of integrals with continuous-time adapted integrands, which facilitates the application of weak limit theorems.


Our results also provide a new perspective for explaining the superiority of symplectic methods over non-symplectic ones in simulating SHSs, constituting our third main contribution. Complementing the explanation of this superiority in existing works (see e.g. \cite{Xinyu2024, chen2021asymptotically, chen2023large, Sheng2025, Wanghongsun2016, Wangxinzhang2018}), we show that symplectic methods 
can better simulate the original Hamiltonians over long time intervals, even in the limit as $n$ tends to infinity. Specifically,
we obtain the limiting distribution of the normalized Hamiltonian deviation $n^p(H(P_t^n,Q_t^n)-H(P_t,Q_t))$, based on which we further present explicit expressions for asymptotic distribution of the deviations and their corresponding statistics for the Euler method and symplectic methods applied to the stochastic Kubo oscillator and the linear stochastic oscillator, respectively. Finally, numerical experiments are provided to confirm our theoretical results.

The rest of this paper is organized as follows. Section \ref{preliminaries} provides some preliminaries.
In Sections \ref{Sectionthree} and \ref{Sectionfour}, we establish the asymptotic error distributions of symplectic methods for SHSs with multiplicative noise and additive noise, respectively, upon which we show the Hamiltonian-specific results. 
In Section \ref{Sectionfive}, we construct the stochastic modified equation in the strong convergence sense and give the new approach for obtaining the asymptotic error distribution. 
Numerical experiments are presented in \Cref{Sectionsix}, demonstrating the superiority of the symplectic methods over the Euler method in simulating the Hamiltonians.

At the end of this section, we give some notations for the following content. 
Let 
$\mathcal D([0,T];{\mathbb{R}^d})$ denote the space of the c\`adl\`ag functions from 
$[0,T]$ to $\mathbb{R}^d$, equipped with the Skorohod topology, which is a Polish space. Let 
$\mathcal C([0,T];{\mathbb{R}^d})$ denote the space of continuous functions, which is a subspace of $\mathcal D([0,T];{\mathbb{R}^d})$.
The Skorohod distance between two continuous functions $x,y \in 
\mathcal C([0,T];{\mathbb{R}^d})$ reduces to the uniform distance, i.e., 
$\sup_{t\in [0,T]} \|x(t)-y(t)\|$, where $\|\cdot\|$ denotes the Euclidean norm. 
Denote by $\mathcal C(\mathbb R^d; \mathbb R^m)$ (resp. $\mathcal C^k(\mathbb R^d; \mathbb R^m)$) the space of continuous (resp. $k$th continuously differentiable) functions from $\mathbb R^d$ to $\mathbb R^m$. 
Throughout and without ambiguity, we denote by $\Rightarrow$ (resp. $\Rightarrow^{stably}$, $\to^{\mathbb{P}}$) the convergence in distribution (resp. the stable convergence in distribution, the convergence in probability), by $D$ the derivative operator, and by $C$ an arbitrary constant whose value may vary from one place to another.  
For $a\in \bbR$, let $[a]$ denote the maximal integer smaller than or equal to $a$. 

\section{Preliminaries} \label{preliminaries}



This section presents some weak limit theorems. We start with  
some basic results for the weak convergence of stochastic processes. 
Let $X_n = (X_n^1, \dots, X_n^d)$ and $X = (X^1, \dots, X^d)$ be $\mathcal{D}([0,T];\mathbb{R}^d)$-valued random variables. It is important to note that the component-wise convergence $X_n^i \Rightarrow X^i$ in $\mathcal{D}([0,T];\mathbb{R})$ for each $i=1,\dots,d$ does not imply the joint convergence $(X_n^1, \dots, X_n^d) \Rightarrow (X^1, \dots, X^d)$ in $\mathcal{D}([0,T];\mathbb{R}^d)$. A sufficient condition to ensure the desired joint convergence is the continuity of each component $X^i$ of the limiting process (see \cite[Proposition 5]{Naganuma2015}).
Moreover, if $F:\mathbb{R}^d\to \mathbb{R}^{d\times m}$ is continuous,
then the convergence $(X_n,Y_n)\Rightarrow(X,Y)$ in $\mathcal{D}([0,T];\mathbb{R}^d\times\mathbb{R}^m)$ further implies $(F(X_n),Y_n)\Rightarrow (F(X),Y)$ in $\mathcal{D}([0,T];\mathbb{R}^{d\times m}\times\mathbb{R}^m)$, which is a special case of the result in \cite{KurtzProtter1991a}.

In the following proposition, we present a criterion for the convergence of sequences of solutions to stochastic differential equations, 
which serves as
a critical tool in our analysis.
\begin{prop} \label{propmain}
For each $n\in\mathbb{N}^+$, let $U_n$ be $\{\mathcal{F}_t\}$-adapted processes with sample paths in $\mathcal{D}([0,T];\mathbb{R}^d)$ and $Y$ be an $\mathbb{R}^m$-valued $\{\mathcal{F}_t\}$-semimartingale. Suppose that $(U_n,Y)\Rightarrow (U,Y)$ 
in $\mathcal{D}([0,T];\mathbb{R}^d\times\mathbb{R}^m)$. Let $\eta_n$ be a right continuous, nondecreasing $\{\mathcal{F}_t\}$-adapted process and assume that $\eta_n(t)\le t$ and $\eta_n(t)\to t$ for all $t\ge0$, $F:\mathbb{R}^d \to \mathbb{R}^{d\times m}$ be globally Lipschitz continuous, and $X_n$ satisfy
\begin{align*}
    X_n(t) = U_n(t) + \int_0^t F(X_n\circ\eta_n(s-))dY(s).
\end{align*}
Then $\{(X_n,U_n,Y)\}$ is relatively compact and any limit point $(X,U,Y)$ satisfies
\begin{align} \label{equX}
    X(t) = U(t) + \int_0^t F(X(s-))dY(s).
\end{align}
If there exists a unique (strong) solution $X$ of \eqref{equX}, then $(X_n,U_n,Y) \Rightarrow (X,U,Y)$ in $\mathcal{D}([0,T];$ $\mathbb{R}^d\times\mathbb{R}^d\times\mathbb{R}^m)$. Moreover, if $(U_n,Y)\to^{\mathbb{P}} (U,Y)$, then we have $(X_n,U_n,Y) \to^{\mathbb{P}} (X,U,Y)$.
\end{prop}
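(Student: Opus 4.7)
The plan is to follow the weak-convergence framework for stochastic integrals developed by Kurtz and Protter, exploiting two structural features of the setup: $Y$ is a fixed $\{\mathcal{F}_t\}$-semimartingale, so the integrator trivially satisfies any uniformly-controlled-variations hypothesis; and since $\eta_n$ is adapted with $\eta_n(s-)\le s$, the delayed integrand $F(X_n\circ\eta_n(s-))$ is itself cadlag and $\{\mathcal{F}_s\}$-adapted, which is exactly what is needed for It\^o-type stochastic integration.

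First I would establish the a priori moment bounds $\sup_n\bbE[\sup_{t\le T}\|X_n(t)\|^p]<\infty$. Decomposing $Y=M+A$ into its local-martingale and finite-variation parts, applying the Burkholder--Davis--Gundy inequality to the martingale contribution, using the linear growth of $F$ inherited from its global Lipschitz property, and closing with Gronwall's inequality gives this estimate. Combined with the assumed convergence of $(U_n,Y)$ and a standard Aldous-type modulus-of-continuity criterion, the bound yields tightness of $\{(X_n,U_n,Y)\}$ in the product Skorohod space, proving the first assertion of the proposition.

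Next, for any weakly convergent subsequence $(X_{n_k},U_{n_k},Y)\Rightarrow(X,U,Y)$, I would pass to the limit in the defining equation. The crucial step is the joint convergence $\bigl(F(X_{n_k}\circ\eta_{n_k}(\cdot-)),Y\bigr)\Rightarrow\bigl(F(X(\cdot-)),Y\bigr)$ in the Skorohod topology, after which the continuity theorem for stochastic integration against a fixed semimartingale (Kurtz--Protter 1991, Theorem 2.2) identifies any limit $(X,U,Y)$ as a solution of \eqref{equX}. This is the main obstacle I anticipate: Skorohod convergence is not preserved under arbitrary compositions, so I invoke the standard lemma that composing a cadlag function with a nondecreasing adapted time-change converging pointwise to the identity is compatible with Skorohod convergence, combined with the global Lipschitz continuity of $F$ and the joint-convergence remark recalled immediately before the proposition.

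Finally, when \eqref{equX} admits a unique strong solution $X$, relative compactness together with uniqueness of subsequential limits promotes the conclusion to the full weak convergence $(X_n,U_n,Y)\Rightarrow(X,U,Y)$. The strengthened statement under $(U_n,Y)\to^{\bbP}(U,Y)$ follows by running the preceding argument along an arbitrary subsequence, extracting a further a.s.-convergent sub-subsequence via the Skorohod representation theorem, and using strong uniqueness to identify the almost sure limit as $X$; since every subsequence has a sub-subsequence converging a.s.\ to $X$, this forces $(X_n,U_n,Y)\to^{\bbP}(X,U,Y)$.
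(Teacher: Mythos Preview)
Your overall strategy---tightness, limit identification via the Kurtz--Protter continuity theorem for stochastic integrals, then uniqueness---is the right framework, and the treatment of the time-change $\eta_n$ together with the final subsequence argument for convergence in probability is fine. However, the first step has a gap: you assert $\sup_n \mathbb{E}\bigl[\sup_{t\le T}\|X_n(t)\|^p\bigr]<\infty$ via BDG and Gronwall, but the hypotheses impose no integrability whatsoever on $Y$ or on the $U_n$. Nothing rules out $U_n$ having infinite first moment or $[M]_T$ failing to be integrable, so the BDG--Gronwall loop cannot close. A repair by localization (stop $Y$ and bound $U_n$ so that quadratic variation, total variation and $\sup_t\|U_n\|$ are bounded, establish tightness on each localized piece, then patch) is possible but must be stated, and is not entirely routine in the Skorohod topology.

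The paper avoids moment bounds altogether. Writing $Z_n(t)=\int_0^t F(X_n\circ\eta_n(s-))\,dY(s)$, it invokes Theorem~3.1 of Jacod--Protter (1998) to obtain relative compactness of $\{(Z_n,Y)\}$ directly: since the integrator is a fixed semimartingale (hence trivially good/UCV, as you yourself note) and the integrands are adapted c\`adl\`ag, the sequence of stochastic integrals is automatically relatively compact jointly with $Y$, no moments required. The paper then uses the structural observation that $Z_n$ can jump only where $Y$ does; because addition in $\mathcal{D}$ is continuous when the summands' discontinuities are aligned through a common component, relative compactness of $\{(U_n,Y)\}$ and of $\{(Z_n,Y)\}$ combine to give relative compactness of $\{(X_n,U_n,Y)\}=\{(U_n+Z_n,U_n,Y)\}$. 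Limit identification is then by Lemma~3.2 of Kurtz--Protter (1991b), which absorbs the $\eta_n$ composition in one stroke. This route is shorter and sidesteps the integrability issue; your approach would go through once properly localized, but as written the moment-bound claim does not follow from the stated hypotheses.
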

\begin{proof}
     Let $Z_n(t):=\int_0^t F(X_n\circ\eta_n(s-))dY(s)$.It follows from \cite[Theorem 3.1]{JacodProtter1998} that $\{(Z_n,Y)\}$ is relatively compact. Since the stochastic integral $Z_n$ has a discontinuity only when $Y$ has a discontinuity, and $\{(U_n,Y)\}$ is relatively compact, it is clear that $\{(X_n,U_n,Y)\}$ is relatively compact. Since $F$ is continuous, we can obtain that $\{(X_n,F(X_n),U_n,Y)\}$ is relatively compact. The fact that any limit point satisfies \eqref{equX} then follows from \cite[Lemma 3.2]{KurtzProtter1991b}. Using the uniqueness assumption, we have $(X_n,U_n,Y) \Rightarrow (X,U,Y)$.
\end{proof}
\begin{remark} \label{rmk1}
    In fact, if $X_n$ satisfies
\begin{align*}
    X_n(t) = U_n(t) + \int_0^t F(X_n(s-))dY(s),
\end{align*}
in \Cref{propmain} ($F$ is bounded and continuous or $F$ is globally Lipschitz), the result $(X_n,U_n,Y) \Rightarrow (X,U,Y)$ still holds.
\end{remark}


Finally, we provide the definition of stable convergence and introduce a criterion for it. 
Let $\{X_n\}$ be a sequence of random variables defined on the same probability space 
$(\Omega, \mathcal{F}, \mathbb{P})$, taking values in a Polish space $E$. 
We say that $X_n$ converges stably to $X$, 
if $\lim_{n \to \infty} \mathbb{E}[U f(X_n)] = \tilde{\mathbb{E}}[U f(X)]$
for every bounded continuous function $f:E \to \mathbb{R}$ and all bounded measurable 
$\mathbb{R}$-valued random variables $U$, where $\tilde{\mathbb{E}}$ denotes the expectation 
on an extension of the original probability space.
In fact, this convergence is stronger than the convergence in distribution. 

According to \cite[Section 2]{Jacod1997} and \cite[Lemma 2.1]{JacodProtter1998}, we give the following lemma.
\begin{lemma} \label{lemmastable}
    Let $Y_n$ be defined on $(\Omega,\mathcal{F},\mathbb{P})$ with values in another metric space $E'$.\\
    (i) Let $Y$ be defined on $(\Omega,\mathcal{F},\mathbb{P})$. If $Y_n\to^{\mathbb{P}} Y$
    and $X_n\Rightarrow^{stably} X$, then we have $(X_n,Y_n)\Rightarrow^{stably}(X,Y)$ for the product topology $E\times E'$.\\
    (ii) Conversely, if $(X_n,Y_n)\Rightarrow (X,Y)$ and $Y$ generates the $\sigma$-field $\mathcal{F}$, we can realize that $X$ is defined on an extension of $(\Omega,\mathcal{F},\mathbb{P})$ and $X_n\Rightarrow^{stably} X$.
\end{lemma}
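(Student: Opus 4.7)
The plan is to use the following standard characterization of stable convergence: $X_n \Rightarrow^{stably} X$ if and only if $\mathbb{E}[Z f(X_n)] \to \tilde{\mathbb{E}}[Z f(X)]$ for every bounded continuous $f: E \to \mathbb{R}$ and every bounded $\mathcal{F}$-measurable random variable $Z$ on $\Omega$, not merely for products of indicators of open cylinders. Part (i) then asserts that stable convergence is preserved under adjoining a process $Y_n$ that converges in probability on the original space, while part (ii) is the converse upgrade from joint weak convergence to stable convergence when $Y$ is rich enough to represent every bounded measurable observable on $\Omega$.

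For (i), fix a bounded continuous $h: E \times E' \to \mathbb{R}$ and a bounded measurable $U$ on $\Omega$; the target is $\mathbb{E}[U h(X_n, Y_n)] \to \tilde{\mathbb{E}}[U h(X, Y)]$. I would first replace $Y_n$ by $Y$ inside $h$: the tightness of $\{X_n\}$ (inherited from stable convergence, which implies convergence in distribution) together with $Y_n \to^{\mathbb{P}} Y$ and the uniform continuity of $h$ on compacts of $E \times E'$ gives $h(X_n, Y_n) - h(X_n, Y) \to^{\mathbb{P}} 0$, and hence in $L^1(\mathbb{P})$ by boundedness of $h$ and $U$. It remains to show $\mathbb{E}[U h(X_n, Y)] \to \tilde{\mathbb{E}}[U h(X, Y)]$, which I would obtain by approximating $Y$ by a simple random variable $\tilde Y = \sum_{i=1}^N y_i \mathbf{1}_{A_i}$ with $A_i \in \mathcal{F}$: for each $i$ the map $x \mapsto h(x, y_i)$ is bounded continuous on $E$ and $U \mathbf{1}_{A_i}$ is bounded measurable on $\Omega$, so the characterization above yields the convergence term by term; controlling the residual $|h(X_n, Y) - h(X_n, \tilde Y)|$ via uniform continuity of $h$ on compacts combined with tightness of the law of $Y$ then closes the argument.

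For (ii), realize $(X_n, Y_n) \Rightarrow (X, Y)$ on an extension where the second coordinate is identified with the original $Y$ on $\Omega$. By Doob--Dynkin, every bounded measurable $U$ on $\Omega$ admits a representation $U = \phi(Y)$ $\mathbb{P}$-a.s.\ for some bounded measurable $\phi: E' \to \mathbb{R}$. For a bounded continuous $f: E \to \mathbb{R}$, I would approximate $\phi$ by a uniformly bounded sequence $\phi_k$ of continuous functions on the Polish space $E'$ with $\phi_k(Y) \to \phi(Y)$ in $L^1(\mathbb{P})$, via Lusin's theorem applied to the tight Borel measure $\mathbb{P} \circ Y^{-1}$. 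For each fixed $k$, the function $(x, y) \mapsto \phi_k(y) f(x)$ is bounded continuous on $E \times E'$, so joint weak convergence yields $\mathbb{E}[\phi_k(Y_n) f(X_n)] \to \tilde{\mathbb{E}}[\phi_k(Y) f(X)]$. A diagonal argument letting $k \to \infty$, with error controlled by $\|f\|_\infty \|\phi_k - \phi\|_{L^1(\mathbb{P} \circ Y^{-1})}$ on the limit side and by the analogous uniform bound on the prelimit side, then delivers $\mathbb{E}[\phi(Y) f(X_n)] \to \tilde{\mathbb{E}}[\phi(Y) f(X)]$, i.e.\ the stable convergence $X_n \Rightarrow^{stably} X$.

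The main obstacle I anticipate is the uniform-in-$n$ control in (ii): the Lusin-type approximation $\phi_k \to \phi$ is inherently measure-dependent, and to pass to the limit after weak convergence one needs the error $\mathbb{E}|\phi_k(Y_n) - \phi(Y_n)|$ to vanish as $k \to \infty$ uniformly over $n$, not merely under the law of $Y$. This forces a careful uniform absolute-continuity argument that exploits the tightness of $\{Y_n\}$ (inherited from $Y_n \Rightarrow Y$) together with the hypothesis that $Y$ generates $\mathcal{F}$, which rules out ``extra randomness'' in the prelimit sequence beyond what $Y$ can encode. Part (i) is technically softer because $Y_n \to^{\mathbb{P}} Y$ is an on-$\Omega$ convergence and the simple-function approximation of $Y$ interacts cleanly with the upgraded stable-convergence characterization.
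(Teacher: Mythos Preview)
The paper does not prove this lemma at all; it merely quotes it from \cite[Section~2]{Jacod1997} and \cite[Lemma~2.1]{JacodProtter1998}. So there is no proof in the paper to compare against, and your outline for (i) is the standard one and is fine.

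For (ii), the obstacle you flag is not a technicality to be overcome by a ``careful uniform absolute-continuity argument''---it is genuine, and in fact the statement as literally written (with an arbitrary sequence $Y_n$ related to $Y$ only through joint convergence in law) is false. On $\Omega=\{0,1\}$ with uniform $\mathbb P$ and $Y=\mathrm{id}$ generating $\mathcal F$, take $X_n=Y_n=1-Y$ for every $n$; then $(X_n,Y_n)$ has the same law as $(Y,Y)$, so $(X_n,Y_n)\Rightarrow(Y,Y)=:(X,Y)$, yet $\mathbb E[Y\,X_n]=\mathbb E[Y(1-Y)]=0\neq\tfrac12=\tilde{\mathbb E}[Y\,X]$ and stable convergence fails. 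Tightness of $\{Y_n\}$ and the hypothesis $\sigma(Y)=\mathcal F$ (which does force $Y_n=\psi_n(Y)$) do not rescue this.

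What the cited reference actually states, and how the paper actually applies the lemma (always with $Y_n=Y=W$, the driving Brownian motion), is the case $Y_n\equiv Y$. In that case your obstacle evaporates: with $U=\phi(Y)$ by Doob--Dynkin and continuous $\phi_k$ from Lusin satisfying $\|\phi_k(Y)-\phi(Y)\|_{L^1(\mathbb P)}\to0$, one applies the weak convergence $(X_n,Y)\Rightarrow(X,Y)$ directly to the bounded continuous test function $(x,y)\mapsto\phi_k(y)f(x)$, and the approximation error $\|f\|_\infty\|\phi_k(Y)-\phi(Y)\|_{L^1(\mathbb P)}$ is independent of $n$. If one has only $Y_n\to^{\mathbb P}Y$, a Slutsky step gives $(X_n,Y)\Rightarrow(X,Y)$ and reduces to this case. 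So your plan is correct once the hypothesis is read as intended; the uniform-in-$n$ Lusin control you were searching for is neither available nor needed.
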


\section{Asymptotic error distributions of symplectic methods for SHS with multiplicative noise} \label{Sectionthree}
In this section, we investigate the asymptotic error distributions of symplectic methods for SHS with multiplicative noise. 
We start with the asymptotic error distribution of the symplectic Euler method for the 2-dimensional SHS with multiplicative noise (see \Cref{mainthm}) and then extend the result to a class of symplectic methods for the $2d$-dimensional SHS 
(see \Cref{mainthmmulti}).
Furthermore, we show the Hamiltonian-specific results based on the obtained asymptotic error distributions (see \Cref{UisHamiltonian} and \Cref{thmdeltaH}). 

\subsection{Symplectic Euler method for the case of multiplicative noise} \label{3.1}
We first consider a 2-dimensional SHS:
\begin{align} \label{Generalequ}
    d \begin{pmatrix} P_t \\ Q_t
\end{pmatrix}=\begin{pmatrix}
    f(P_t,Q_t) \\
    g(P_t,Q_t)
\end{pmatrix}dt +\begin{pmatrix}
    a(P_t,Q_t) \\ b(P_t,Q_t)
\end{pmatrix} \circ dW_t, \quad t\in (0,T],
\end{align}
with initial value $(P_0,Q_0)\in \mathbb{R}^2$, where $W$ is a 1-dimensional Brownian motion on the filtered complete probability space $(\Omega,\mathcal{F},\{\mathcal{F}_t\}_{t\ge 0},\mathbb{P})$, $f:=-\frac{\partial H}{\partial Q}$, $g:=\frac{\partial H}{\partial P}$, $a:=-\frac{\partial \bar{H}}{\partial Q}$, $b:=\frac{\partial \bar{H}}{\partial P}$, and $H,\bar{H}:\mathbb{R}^2\to \mathbb{R}$ are Hamiltonian functions.
Using the Stratonovich--It\^{o} conversion formula, the solution of \eqref{Generalequ} satisfies
\begin{align}
    P_t =&\  P_0+\int_0^t (f+\frac{1}{2}a_1'a+\frac{1}{2}a_2'b)(P_s,Q_s)ds + \int_0^t a(P_s,Q_s)dW_s, \label{p} \\
    Q_t =& \ Q_0+\int_0^t (g+\frac{1}{2}b_1'a+\frac{1}{2}b_2'b)(P_s,Q_s)ds + \int_0^t b(P_s,Q_s)dW_s, \label{q}
\end{align}
where $a_1':=\frac{\partial a}{\partial P},a_2':=\frac{\partial a}{\partial Q}$, $b_1':=\frac{\partial b}{\partial P}$, and $b_2':=\frac{\partial b}{\partial Q}$.

Without loss of generality, we assume that $T$ is an integer. We consider a uniform partition of the interval $[0,T]$ with step size $\frac{1}{n}$: $0=t_0<t_1<\cdots<t_{nT}=T$, where $t_k:=\frac{k}{n}$ for $n\in\mathbb N^+$ and $k=0,1,\dots,nT$. Then the symplectic Euler method is defined by $(P_0^n,Q_0^n):=(P_0,Q_0)$ and
\begin{align*} 
    \begin{cases}
    P_{t_{k+1}}^n = P_{t_k}^n + (f+\frac{1}{2}a_2'b-\frac{1}{2}a_1'a)(P_{t_{k+1}}^n,Q_{t_k}^n)\frac 1n+a(P_{t_{k+1}}^n,Q_{t_k}^n)\Delta \hat{W}_k, \\
    Q_{t_{k+1}}^n = Q_{t_k}^n + (g+\frac{1}{2}b_2'b-\frac{1}{2}b_1'a)(P_{t_{k+1}}^n,Q_{t_k}^n)\frac 1n+b(P_{t_{k+1}}^n,Q_{t_k}^n)\Delta \hat{W}_k,
    \end{cases}
\end{align*}
for $k=0,\dots,nT-1$. Here, we use the truncated random variable $\Delta \hat{W}_k:=\sqrt{1/n}\,\zeta_{k}$ instead of $\Delta W_k=W_{t_{k+1}}-W_{t_k}=\sqrt{1/n}\,\xi_k$,
where $\xi_0,\xi_1,\dots,\xi_{nT}$ are independent $\mathcal N(0,1)$-distributed random variables, and $\zeta_k$ is defined by
\begin{align*}
    \zeta_{k}:=
    \begin{cases}
        \xi_k, \quad &|\xi_k|\le A_n,\\
        A_n, \quad & \xi_k>A_n,\\
        -A_n, \quad & \xi_k<-A_n,
    \end{cases}
\end{align*}
with $A_n:=\sqrt{2\rho \ln n}$ and the constant $\rho> 2$. The following properties hold:
\begin{align}
    &\mathbb{E}[|\xi_k-\zeta_{k}|^p] \le C(p)\big(\frac{1}{n}\big)^\rho,\quad \forall p\ge1, \label{truncated1} \\
    &\big( \mathbb{E}[| \xi_k^\beta-\zeta_{k}^\beta|^p]\big)^{\frac 1p} \le C(p,\beta,\epsilon,\rho)\big(\frac{1}{n}\big)^{\rho/p-\epsilon}, \quad \forall \beta\in \mathbb{N}^+, \ p\ge 1,\ \epsilon\in(0,\rho/p).
    \label{truncated2}
\end{align}
The proof of \eqref{truncated1} is standard similar to  \cite[Eq.~(1.3.28)]{Milstein2021}. 
For \eqref{truncated2},
we have $\mathbb{E}[|\xi_k^\beta-\zeta_{k}^\beta|^p]=\mathbb{E}[|\xi_k-\zeta_{k}|^p|\xi_k^{\beta-1}+\xi_k^{\beta-2}\zeta_{k}+\dots+\zeta_{k}^{\beta-1}|^p]\le C(p,\beta,\epsilon,\rho)(\mathbb{E}|\xi_k-\zeta_{k}|^{\frac{p}{1-\epsilon p/\rho}})^{1-\epsilon p/\rho}\le C(p,\beta,\epsilon,\rho)(1/n)^{\rho-p\epsilon}$ for any $\epsilon\in(0,\rho/p)$.


Since the symplectic Euler method is implicit, we need to ensure its well-definedness. For any fixed $\epsilon\in(0,\frac{1}{2})$, we introduce a subset $\Omega^n \subset \Omega$:
\begin{align*}
    \Omega^n = \Bigg\{\omega \in \Omega: \sup_{\substack{0\le s,t\le T \\|t-s|\le \frac{1}{n}}} |W_t(\omega)-W_s(\omega)| \le \big(\frac{1}{n}\big)^{\frac{1}{2}-\epsilon} \Bigg\}.
\end{align*}
In the following proposition, we show that $\mathbb{P}((\Omega^n)^c)$ decays exponentially.
\begin{prop} \label{propOmeganc}
    There exist constants $C_1,C_2>0$ such that $\mathbb{P}((\Omega^n)^c)\le C_2n^{1-\epsilon}e^{-C_1n^{2\epsilon}}$.
\end{prop}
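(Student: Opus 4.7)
My plan is to reduce the continuous supremum over all pairs $(s,t)$ with $|t-s|\le 1/n$ to a finite union bound over the maximal oscillation of $W$ on the partition intervals, and then apply a standard Gaussian tail/reflection estimate.

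First, I would partition $[0,T]$ into the $nT$ subintervals $[t_k,t_{k+1}]$ of length $1/n$ used to define the scheme, and set
\[
M_k \;:=\; \sup_{u\in[t_k,t_{k+1}]} |W_u-W_{t_k}|, \qquad k=0,1,\dots,nT-1.
\]
The key geometric observation is that any $s\le t$ in $[0,T]$ with $t-s\le 1/n$ lies in at most two consecutive partition intervals: if $s\in[t_k,t_{k+1}]$ then $t\in[t_k,t_{k+2}]$. In either case, inserting the closest grid point gives
\[
|W_t-W_s| \;\le\; |W_t-W_{t_{k+1}}| + |W_{t_{k+1}}-W_s| \;\le\; 2\max_{0\le j\le nT-1} M_j,
\]
(with the obvious simplification when $s,t$ lie in the same interval). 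Hence
\[
\Bigl\{ \sup_{|t-s|\le 1/n} |W_t-W_s| > n^{-(1/2-\epsilon)}\Bigr\}
\;\subset\; \Bigl\{ \max_{0\le k\le nT-1} M_k > \tfrac{1}{2}n^{-(1/2-\epsilon)}\Bigr\}.
\]

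Next, I would apply a union bound over $k$ and estimate each $\mathbb{P}(M_k>x)$ using the reflection principle for Brownian motion. By stationarity of increments, each $M_k$ is equal in law to $\sup_{u\in[0,1/n]}|W_u|$, and the reflection principle yields
\[
\mathbb{P}(M_k>x) \;\le\; 4\,\mathbb{P}(W_{1/n}>x) \;\le\; \frac{4}{x\sqrt{2\pi n}}\,e^{-nx^2/2}.
\]
Plugging in $x=\tfrac{1}{2}n^{-(1/2-\epsilon)}$ gives $nx^2/2 = \tfrac{1}{8}n^{2\epsilon}$ and the prefactor becomes of order $n^{-\epsilon}$. Summing the $nT$ terms then produces the bound $C_2 n^{1-\epsilon} e^{-C_1 n^{2\epsilon}}$ with $C_1=1/8$ and $C_2$ depending only on $T$, as claimed.

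There is no serious obstacle here; the proof is a standard Gaussian modulus-of-continuity estimate. The only step requiring a little care is the geometric reduction from the continuous double supremum to a finite maximum of interval oscillations (ensuring the factor of $2$ is correct), and verifying the bookkeeping of powers of $n$ so that the exponent comes out as $n^{2\epsilon}$ rather than some weaker power.
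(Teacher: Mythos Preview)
Your proposal is correct and follows essentially the same approach as the paper: reduce the continuous two-parameter supremum to a maximum of interval oscillations via the triangle inequality, apply a union bound over the $nT$ intervals, and bound each term using the reflection principle for Brownian motion. The paper's argument is identical in structure, only less explicit about the constants (it leaves the factor $2$ and the value of $C_1$ as unspecified $C$'s), whereas you track them to obtain $C_1=1/8$.
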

\begin{proof}
    For $t_i=i/n,i=0,1,\dots,nT$, by the triangle inequality, there exists a constant $C>0$ such that
    \begin{align*}
        \sup_{\substack{0\le s,t\le T \\|t-s|\le \frac{1}{n}}} |W_t-W_s| \le C \sup_{i\in\{0,\ldots,nT\}} \sup_{0\le \delta\le \frac{1}{n}} |W_{t_i+\delta}-W_{t_i}|.
    \end{align*}
    Let $Z_i=\sup_{0\le \delta\le \frac{1}{n}} |W_{t_i+\delta}-W_{t_i}|$, then 
    \begin{align*}
        \mathbb{P}((\Omega^n)^c)\le\mathbb{P}\Big(\sup_{i\in\{0,\ldots,nT\}}  Z_i > C\big(\frac{1}{n}\big)^{\frac{1}{2}-\epsilon}\Big) \le nT\,\mathbb{P}\Big(Z_0>C\big(\frac{1}{n}\big)^{\frac{1}{2}-\epsilon}\Big).
    \end{align*}
    Thus, there exists a constant $C_1>0$ such that
    \begin{align*}
        \mathbb{P}\Big(Z_0>C\big(\frac{1}{n}\big)^{\frac{1}{2}-\epsilon}\Big)=\mathbb{P}\Big(\sup_{0\le\delta\le \frac{1}{n}}|W_\delta|> C\big(\frac{1}{n}\big)^{\frac{1}{2}-\epsilon}  \Big)\le Cn^{-\epsilon}e^{-C_1n^{2\epsilon}},
    \end{align*}
    where the last inequality is obtained by the reflection principle for Brownian motion (see \cite[Theorem 21.19]{Klenke2008}). Hence, we conclude that there exist constants $C_1,C_2>0$ such that $\mathbb{P}((\Omega^n)^c)\le C_2n^{1-\epsilon}e^{-C_1n^{2\epsilon}}$. 
\end{proof}

On $\Omega^n$, we define the continuous version $(P_t^n,Q_t^n)$ by
\begin{align} \label{SymEuler}
    \begin{cases}
    P_t^n = P_{t_k}^n + (f+\frac{1}{2}a_2'b-\frac{1}{2}a_1'a)(P_t^n,Q_{t_k}^n)(t-t_k)+a(P_t^n,Q_{t_k}^n)\Delta\hat{W}_{k,t}, \\
    Q_t^n = Q_{t_k}^n + (g+\frac{1}{2}b_2'b-\frac{1}{2}b_1'a)(P_t^n,Q_{t_k}^n)(t-t_k)+b(P_t^n,Q_{t_k}^n)\Delta\hat{W}_{k,t},
    \end{cases}
    \quad t\in (t_k,t_{k+1}],
\end{align}
for $k=0,\dots,nT-1$, where $\Delta\hat{W}_{k,t}:=\sqrt{t-t_k}\zeta_{k}$. And let $(P_t^n,Q_t^n) = (P_0,Q_0)$, $t\in[0,T]$ on $(\Omega^n)^c$. To simplify the notation, 
we denote $u:=f+\frac{1}{2}a_2'b-\frac{1}{2}a_1'a$, $v:=g+\frac{1}{2}b_2'b-\frac{1}{2}b_1'a$, and $u_1':=\frac{\partial u}{\partial P},u_2':=\frac{\partial u}{\partial Q},v_1':=\frac{\partial v}{\partial P},v_2':=\frac{\partial v}{\partial Q}$.

\begin{assumption} \label{assumption1}
    (\romannumeral 1) $f,g,a,b,a_1'a,a_2'b,b_1'a,b_2'b:\mathbb{R}^2\to \mathbb{R}$ are globally Lipschitz continuous.\\
    (\romannumeral 2) Let $f,g\in C^3(\mathbb{R}^2;\mathbb{R})$, $a,b\in C^4(\mathbb{R}^2;\mathbb{R})$ and assume that the $k$th order derivatives $D^k f,D^k g$ for $k= 2,3$ and $D^k a,D^k b$ for $k= 2,3,4$ are functions growing at most polynomially.
\end{assumption}

We then show that, under \Cref{assumption1}, the continuous numerical solution is indeed well-defined and admits an explicit expression on $\Omega^n$ for large $n$.

\begin{prop} \label{mainprop}
Let $\epsilon_1=\frac{1}{4\sup|u_1'|}$ ($\epsilon_1=\infty$ if $\sup|u_1'|=0$) and $\epsilon_2=\frac{1}{4\sup|a_1'|}$ ($\epsilon_2=\infty$ if $\sup|a_1'|=0$). Suppose 
\begin{align} \label{n}
    n > \max\big\{ 4\sup|u_1'|, (4\sup|a_1'|)^{1/(\frac{1}{2}-\epsilon)}
    \big\},
\end{align}
then, under Assumption \ref{assumption1}, $(P_t^n,Q_t^n)$ is well-defined and 
\begin{align}
    P_t^n =&\  P_{t_k}^n+u(P_{t_k}^n,Q_{t_k}^n)(t-t_k)+a(P_{t_k}^n,Q_{t_k}^n)\Delta \hat{W}_{k,t}+(a_1'a)(P_{t_k}^n,Q_{t_k}^n)\Delta \hat{W}_{k,t}^2 \nonumber\\
    &+ (u_1'u)(P_{t_k}^n,Q_{t_k}^n)(t-t_k)^2+(u_1'a+a_1'u)(P_{t_k}^n,Q_{t_k}^n)(t-t_k)\Delta \hat{W}_{k,t}\nonumber \\
    &+R_1(P_{t_k}^n,Q_{t_k}^n,t-t_k,\Delta \hat{W}_{k,t}), \quad t\in (t_k,t_{k+1}], \label{explicitp} \\
    Q_t^n =& \ Q_{t_k}^n+v(P_{t_k}^n,Q_{t_k}^n)(t-t_k)+b(P_{t_k}^n,Q_{t_k}^n)\Delta \hat{W}_{k,t}+(b_1'a)(P_{t_k}^n,Q_{t_k}^n)\Delta \hat{W}_{k,t}^2 \nonumber\\
    &+ (v_1'u)(P_{t_k}^n,Q_{t_k}^n)(t-t_k)^2+(v_1'a+b_1'u)(P_{t_k}^n,Q_{t_k}^n)(t-t_k)\Delta \hat{W}_{k,t}\nonumber \\
    &+R_2(P_{t_k}^n,Q_{t_k}^n,t-t_k,\Delta \hat{W}_{k,t}), \quad t\in (t_k,t_{k+1}]
    \label{explicitq}
\end{align}
on $\Omega^n$ for $k=0,1,\ldots,nT-1$, where $R_1$ and $R_2$ are real-valued continuous functions on $\mathbb{R}^2\times (-\epsilon_1,\epsilon_1)\times(-\epsilon_2,\epsilon_2)$ 
with expansion
$$R_1(\zeta,h_1,h_2)=\sum_{\alpha+\beta=3,\alpha,\beta\in\mathbb{N}}R_1^{\alpha,\beta}(\zeta,\lambda_1h_1,\lambda_2h_2)h_1^\alpha h_2^\beta,$$ $$R_2(\zeta,h_1,h_2)=\sum_{\alpha+\beta=3,\alpha,\beta\in\mathbb{N}}R_2^{\alpha,\beta}(\zeta,\bar{\lambda}_1h_1,\bar{\lambda}_2h_2)h_1^\alpha h_2^\beta,$$
for some constants $\lambda_1,\lambda_2,\bar{\lambda}_1,\bar{\lambda}_2\in[0,1]$, and functions $R_1^{\alpha,\beta},R_2^{\alpha,\beta}$with $\alpha+\beta=3,\alpha,\beta\in \mathbb{N}$ growing at most polynomially.
\end{prop}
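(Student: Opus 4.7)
The plan has two steps: first establish well-posedness of the implicit equation on $\Omega^n$ via Banach's fixed-point theorem for $n$ satisfying \eqref{n}, then derive the claimed expansion by iterated Taylor expansion combined with the implicit function theorem.

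\textbf{Well-posedness via contraction.} For fixed $k$ and $t \in (t_k, t_{k+1}]$, I would view the first line of \eqref{SymEuler} as a fixed-point problem $x = \Phi(x)$ with
$$\Phi(x) := P^n_{t_k} + u(x, Q^n_{t_k})(t - t_k) + a(x, Q^n_{t_k})\Delta\hat{W}_{k,t}.$$
By Assumption \ref{assumption1}(i)--(ii), $u_1'$ and $a_1'$ are continuous and globally bounded, so $\Phi$ is Lipschitz with constant $\sup|u_1'|(t-t_k) + \sup|a_1'||\Delta\hat{W}_{k,t}|$. On $\Omega^n$ one has $|\zeta_k| \le |\xi_k| = \sqrt{n}|\Delta W_k| \le n^\epsilon$, whence $|\Delta\hat{W}_{k,t}| = \sqrt{t-t_k}|\zeta_k| \le (1/n)^{1/2-\epsilon}$. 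Condition \eqref{n} then forces both Lipschitz contributions to be at most $1/4$, making $\Phi$ a $(1/2)$-contraction. Banach's theorem yields a unique $P^n_t$, and $Q^n_t$ follows immediately from the (now explicit) second line of \eqref{SymEuler}.

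\textbf{Iterated Taylor expansion.} Writing $h_1 := t-t_k$, $h_2 := \Delta\hat{W}_{k,t}$, $\Delta P := P^n_t - P^n_{t_k}$, I would Taylor-expand $u(\cdot, Q^n_{t_k})$ and $a(\cdot, Q^n_{t_k})$ about $P^n_{t_k}$ to first order with Lagrange remainder (so with second derivatives at interior points); the implicit equation then reads
$$\Delta P = uh_1 + ah_2 + (u_1' h_1 + a_1' h_2)\Delta P + \tfrac{1}{2}\big[u_{11}''|_\ast h_1 + a_{11}''|_\ast h_2\big](\Delta P)^2,$$
with $u, a, u_1', a_1'$ evaluated at $(P^n_{t_k}, Q^n_{t_k})$. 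Since the contraction estimate gives $|u_1'h_1 + a_1'h_2| < 1/2$, substituting the first Picard iterate $\Delta P = uh_1 + ah_2 + O((|h_1|+|h_2|)^2)$ into the right-hand side and collecting terms of order $\le 2$ produces exactly the six monomials claimed for $P^n_t$, with everything of order $\ge 3$ accumulating into $R_1$. The expansion for $Q^n_t$ is then essentially direct, since $Q^n_t = Q^n_{t_k} + v(P^n_t, Q^n_{t_k})h_1 + b(P^n_t, Q^n_{t_k})h_2$ is already explicit once $\Delta P$ is known; one Taylor-expands $v, b$ in the first slot and substitutes the expansion of $\Delta P$.

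\textbf{Structure of the remainder and main obstacle.} To cast $R_1$ in the stated form $\sum_{\alpha+\beta=3} R_1^{\alpha,\beta}(\zeta, \lambda_1 h_1, \lambda_2 h_2) h_1^\alpha h_2^\beta$ with polynomially growing $R_1^{\alpha,\beta}$, I would invoke the implicit function theorem on $F(x, h_1, h_2) := x - P^n_{t_k} - u(x, Q^n_{t_k})h_1 - a(x, Q^n_{t_k})h_2$ to regard $\Delta P$ as a $C^3$ function of $(h_1, h_2)$ on $(-\epsilon_1, \epsilon_1) \times (-\epsilon_2, \epsilon_2)$ parameterized by $\zeta := (P^n_{t_k}, Q^n_{t_k})$—valid because $u, a \in C^3$ by Assumption \ref{assumption1}(ii) and $\partial_x F$ is bounded away from zero by the contraction estimate. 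Taylor's theorem with Lagrange remainder at order three applied to this smooth map then yields $R_1^{\alpha,\beta}$ as smooth combinations of $u, a$ and their partials up to third order evaluated at interior points; polynomial growth is inherited from Assumption \ref{assumption1}(ii). The same strategy (with $v, b$ in place of $u, a$, exploiting the $C^4$ regularity of $a, b$) handles $R_2$. The main obstacle—and the reason for the higher-regularity hypothesis (ii)—is extracting this explicit sum-of-monomials structure rather than settling for an abstract $O((|h_1|+|h_2|)^3)$ bound; once the structure is in place, polynomial growth is automatic.
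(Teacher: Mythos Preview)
Your proposal is correct and follows essentially the same approach as the paper: both invoke the implicit function theorem to view the solution as a smooth function of $(h_1,h_2)$ parameterized by $\zeta=(P^n_{t_k},Q^n_{t_k})$, then Taylor-expand to second order with Lagrange remainder to obtain the explicit coefficients and the polynomially-growing third-order remainders $R_i^{\alpha,\beta}$. The paper does this in one pass---defining $F(\xi_1,\xi_2,h_1,h_2)$, checking $\det\partial F/\partial(\xi_1,\xi_2)=1-u_1'h_1-a_1'h_2>\tfrac12$, and Taylor-expanding the implicit functions $l_1,l_2$ directly---whereas you add a preliminary Banach contraction step for well-posedness (redundant once the IFT is in hand) and compute the coefficients by Picard substitution rather than implicit differentiation, but the content is identical.
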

\begin{proof}   
For $\omega\in\Omega^n$ and $t\in(t_k,t_{k+1}]$, by \eqref{n}  we have
 $|t-t_k|<\frac{1}{4\sup|u_1'|}$ and $|W_t-W_{t_k}|<\frac{1}{4\sup|a_1'|}$. 
 So that $|\Delta \hat{W}_{k,t}|<\frac{1}{4\sup|a_1'|}$. For a fixed $\xi^0=(\xi_1^0,\xi_2^0)^{\top}\in \mathbb{R}^2$, we define
\begin{align*}
    F(\xi_1,\xi_2,h_1,h_2)=\begin{pmatrix}
        \xi_1-\xi_1^0-u(\xi_1,\xi_2^0)h_1-a(\xi_1,\xi_2^0)h_2 \\
        \xi_2-\xi_2^0-v(\xi_1,\xi_2^0)h_1-b(\xi_1,\xi_2^0)h_2
    \end{pmatrix}.
\end{align*}
Using the fact that 
\begin{align*}
    \frac{\partial F}{\partial(\xi_1,\xi_2)} = \begin{pmatrix}
        1-u_1'(\xi_1,\xi_2^0)h_1-a_1'(\xi_1,\xi_2^0)h_2 & 0 \\
        -v_1'(\xi_1,\xi_2^0)h_1-b_1'(\xi_1,\xi_2^0)h_2 & 1
    \end{pmatrix},
\end{align*}
we obtain $|\frac{\partial F}{\partial(\xi_1,\xi_2)}|=1-u_1'(\xi_1,\xi_2^0)h_1-a_1'(\xi_1,\xi_2^0)h_2>\frac{1}{2}$ for $|h_1|<\epsilon_1,|h_2|<\epsilon_2$. By the implicit function theorem, there exist functions $l_1$ and $l_2$ such that $F(l_1(h_1,h_2),l_2(h_1,h_2),h_1,h_2)=0$ with $l_1(0,0)=\xi_1^0$ and $l_2(0,0)=\xi_2^0$. The Taylor formula yields
\begin{align*}
    \xi_1 = l_1(h_1,h_2) =& \,\xi_1^0+u(\xi^0)h_1+a(\xi^0)h_2+(a_1'a)(\xi^0)h_2^2 \\
    &+(u_1'u)(\xi^0)h_1^2+(a_1'u+u_1'a)(\xi^0)h_1h_2 +R_1(\xi^0,h_1,h_2), \\
    \xi_2 = l_2(h_1,h_2) =&\, \xi_2^0+v(\xi^0)h_1+b(\xi^0)h_2+(b_1'a)(\xi^0)h_2^2 \\
    &+(v_1'u)(\xi^0)h_1^2+(v_1'a+b_1'u)(\xi^0)h_1h_2+R_2(\xi^0,h_1,h_2),
\end{align*}
where
\begin{align*}
    &R_1(\xi^0,h_1,h_2)=\sum_{\alpha+\beta=3,\alpha,\beta\in\mathbb{N}}R_1^{\alpha,\beta}(\xi^0,\lambda_1h_1,\lambda_2h_2)h_1^\alpha h_2^\beta,\\
    &R_2(\xi^0,h_1,h_2)=\sum_{\alpha+\beta=3,\alpha,\beta\in\mathbb{N}}R_2^{\alpha,\beta}(\xi^0,\bar{\lambda}_1h_1,\bar{\lambda}_2h_2)h_1^\alpha h_2^\beta,
\end{align*}
with $\lambda_1,\lambda_2,\bar{\lambda}_1,\bar{\lambda}_2\in[0,1]$, and $R_1^{\alpha,\beta},R_2^{\alpha,\beta}$ with $\alpha+\beta=3,\alpha,\beta\in \mathbb{N}$ are real-valued continuous functions on $\mathbb{R}^2\times (-\epsilon_1,\epsilon_1)\times(-\epsilon_2,\epsilon_2)$. Under Assumption \ref{assumption1}, we know that for each $\alpha,\beta$, there exist $K_1^{\alpha,\beta},K_2^{\alpha,\beta}>0$ and $\gamma_1^{\alpha,\beta},\gamma_2^{\alpha,\beta}>0$ such that
\begin{align*}
    &\Big|R_1^{\alpha,\beta}(\xi^0,\lambda_1h_1,\lambda_2h_2)\Big|\le K_1^{\alpha,\beta}(1+\|\xi^0\|^{\gamma_1^{\alpha,\beta}}),\\
    &\Big|R_2^{\alpha,\beta}(\xi^0,\lambda_1h_1,\lambda_2h_2)\Big|\le K_2^{\alpha,\beta}(1+\|\xi^0\|^{\gamma_2^{\alpha,\beta}}).
\end{align*}
We conclude that $(P_t^n,Q_t^n)$ is well-defined and satisfies \eqref{explicitp} and \eqref{explicitq} on $\Omega^n$, which ends the proof.
\end{proof} 

Based on \Cref{mainprop}, we derive the integral form of $(P_t^n,Q_t^n)$.
Let $\eta_n(t)=\frac{[nt]}{n}$, it follows from \Cref{mainprop} that
\begin{align*}
    P_t^n =& \ P_0^n+\sum_{k=0}^{[nt]-1}u(P_{t_k}^n,Q_{t_k}^n)(t_{k+1}-t_k)+u(P_{\eta_n(t)}^n,Q_{\eta_n(t)}^n)(t-\eta_n(t)) \\
    &+\sum_{k=0}^{[nt]-1}a(P_{t_k}^n,Q_{t_k}^n)(W_{t_{k+1}}-W_{t_k})+a(P_{\eta_n(t)}^n,Q_{\eta_n(t)}^n)(W_t-W_{\eta_n(t)}) \\
    &+ \sum_{k=0}^{[nt]-1} (a_1'a)(P_{t_k}^n,Q_{t_k}^n)(W_{t_{k+1}}-W_{t_k})^2+(a_1'a)(P_{\eta_n(t)}^n,Q_{\eta_n(t)}^n)(W_t-W_{\eta_n(t)})^2+R_{P,2}^n(t)+R_{P,3}^n(t)
\end{align*}
on $\Omega^n$, where
\begin{align*}
    R_{P,2}^n(t):=&\sum_{k=0}^{[nt]-1}(u_1'u)(P_{t_k}^n,Q_{t_k}^n)(t_{k+1}-t_k)^2+(u_1'u)(P_{\eta_n(t)}^n,Q_{\eta_n(t)}^n)(t-\eta_n(t))^2 \\
    &+\sum_{k=0}^{[nt]-1}(u_1'a+a_1'u)(P_{t_k}^n,Q_{t_k}^n)(t_{k+1}-t_k)(W_{t_{k+1}}-W_{t_k}) \\
    &+(u_1'a+a_1'u)(P_{\eta_n(t)}^n,Q_{\eta_n(t)}^n)(t-\eta_n(t))(W_t-W_{\eta_n(t)}) \\
    &+\sum_{k=0}^{[nt]-1}\sum_{\alpha+\beta=3,\alpha,\beta\in\mathbb{N}}R_1^{\alpha,\beta}(P_{t_k}^n,Q_{t_k}^n,\lambda_1(t_{k+1}-t_k),\lambda_2\Delta\hat{W}_k)(t_{k+1}-t_k)^\alpha(W_{t_{k+1}}-W_{t_k})^\beta\\
    &+\sum_{\alpha+\beta=3,\alpha,\beta\in\mathbb{N}}R_1^{\alpha,\beta}(P_{\eta_n(t)}^n,Q_{\eta_n(t)}^n,\lambda_1(t-\eta_n(t)),\lambda_2\Delta\hat{W}_{[nt],t})(t-\eta_n(t))^\alpha(W_t-W_{\eta_n(t)})^\beta, 
\end{align*}
and $R_{P,3}^n$ denotes the error arising from the truncated increments of $W$, that is,
\begin{align*}
    R_{P,3}^n(t):=&\sum_{k=0}^{[nt]-1}a(P_{t_k}^n,Q_{t_k}^n)\sqrt{t_{k+1}-t_k}(\zeta_{k}-\xi_k)+a(P_{\eta_n(t)}^n,Q_{\eta_n(t)}^n)\sqrt{t-\eta_n(t)}(\zeta_{[nt]}-\xi_{[nt]})\\
    &+\sum_{k=0}^{[nt]-1} (a_1'a)(P_{t_k}^n,Q_{t_k}^n)(t_{k+1}-t_k)(\zeta_{k}^2-\xi_k^2)+(a_1'a)(P_{\eta_n(t)}^n,Q_{\eta_n(t)}^n)(t-\eta_n(t))(\zeta_{[nt]}^2-\xi_{[nt]}^2)\\
    &+\sum_{k=0}^{[nt]-1}(u_1'a+a_1'u)(P_{t_k}^n,Q_{t_k}^n)(t_{k+1}-t_k)^{3/2}(\zeta_{k}-\xi_k) \\
    &+(u_1'a+a_1'u)(P_{\eta_n(t)}^n,Q_{\eta_n(t)}^n)(t-\eta_n(t))^{3/2}(\zeta_{[nt]}-\xi_{[nt]})\\
    &+\sum_{k=0}^{[nt]-1}\sum_{\alpha+\beta=3,\alpha,\beta\in\mathbb{N}}R_1^{\alpha,\beta}(P_{t_k}^n,Q_{t_k}^n,\lambda_1(t_{k+1}-t_k),\lambda_2\Delta\hat{W}_k)(t_{k+1}-t_k)^{\alpha+\beta/2}(\zeta_{k}^\beta-\xi_k^\beta)\\
    &+\sum_{\alpha+\beta=3,\alpha,\beta\in\mathbb{N}}R_1^{\alpha,\beta}(P_{\eta_n(t)}^n,Q_{\eta_n(t)}^n,\lambda_1(t-\eta_n(t)),\lambda_2\Delta\hat{W}_{[nt],t})(t-\eta_n(t))^{\alpha+\beta/2}(\zeta_{[nt]}^\beta-\xi_{[nt]}^\beta).
\end{align*}
Combining with
\begin{align*}
    &(W_t-W_s)^2 = t-s+2\int_s^t (W_{\tau}-W_s)dW_{\tau},
\end{align*}
the explicit expression of $P_t^n$ in \eqref{explicitp} leads to
\begin{equation} \label{pn}
    \begin{split}
        P_t^n =&\  P_0 + \int_0^t (u+a_1'a)(P_{\eta_n(s)}^n,Q_{\eta_n(s)}^n)ds + \int_0^ta(P_{\eta_n(s)}^n,Q_{\eta_n(s)}^n)dW_s + R_P^n(t),
    \end{split}
\end{equation}
on $\Omega^n$, where $R_P^n:=R_{P,1}^n+R_{P,2}^n+R_{P,3}^n$ and 
\begin{align*}
    R_{P,1}^n(t):= 2\int_0^t (a_1'a)(P_{\eta_n(s)}^n,Q_{\eta_n(s)}^n)(W_s-W_{\eta_n(s)})dW_s.
\end{align*}
By applying an analogous argument to $Q_t^n$ in \eqref{explicitq}, we obtain 
\begin{equation} \label{qn}
    \begin{split}
        Q_t^n =&\  Q_0 + \int_0^t (v+b_1'a)(P_{\eta_n(s)}^n,Q_{\eta_n(s)}^n)ds + \int_0^tb(P_{\eta_n(s)}^n,Q_{\eta_n(s)}^n)dW_s +R_Q^n(t),
    \end{split}
\end{equation}
on $\Omega^n$, where $R_Q^n:=R_{Q,1}^n+R_{Q,2}^n+R_{Q,3}^n$, 
\begin{align*}
    R_{Q,1}^n(t):= 2\int_0^t (b_1'a)(P_{\eta_n(s)}^n,Q_{\eta_n(s)}^n)(W_s-W_{\eta_n(s)})dW_s,
\end{align*}
and $R_{Q,2}^n,R_{Q,3}^n$ have similar formulations to $R_{P,2}^n,R_{P,3}^n$, respectively, differing only in the coefficients. To be specific,
\begin{align*}
    R_{Q,2}^n(t):=&\sum_{k=0}^{[nt]-1}(v_1'u)(P_{t_k}^n,Q_{t_k}^n)(t_{k+1}-t_k)^2+(v_1'u)(P_{\eta_n(t)}^n,Q_{\eta_n(t)}^n)(t-\eta_n(t))^2 \\
    &+\sum_{k=0}^{[nt]-1}(v_1'a+b_1'u)(P_{t_k}^n,Q_{t_k}^n)(t_{k+1}-t_k)(W_{t_{k+1}}-W_{t_k}) \\
    &+(v_1'a+b_1'u)(P_{\eta_n(t)}^n,Q_{\eta_n(t)}^n)(t-\eta_n(t))(W_t-W_{\eta_n(t)}) \\
    &+\sum_{k=0}^{[nt]-1}\sum_{\alpha+\beta=3,\alpha,\beta\in\mathbb{N}}R_2^{\alpha,\beta}(P_{t_k}^n,Q_{t_k}^n,\bar{\lambda}_1(t_{k+1}-t_k),\bar{\lambda}_2\Delta\hat{W}_k)(t_{k+1}-t_k)^\alpha(W_{t_{k+1}}-W_{t_k})^\beta\\
    & +\sum_{\alpha+\beta=3,\alpha,\beta\in\mathbb{N}}R_2^{\alpha,\beta}(P_{\eta_n(t)}^n,Q_{\eta_n(t)}^n,\bar{\lambda}_1(t-\eta_n(t)),\bar{\lambda}_2\Delta\hat{W}_{[nt],t})(t-\eta_n(t))^\alpha(W_t-W_{\eta_n(t)})^\beta,\\
    R_{Q,3}^n(t):=&\sum_{k=0}^{[nt]-1}b(P_{t_k}^n,Q_{t_k}^n)\sqrt{t_{k+1}-t_k}(\zeta_{k}-\xi_k)+b(P_{\eta_n(t)}^n,Q_{\eta_n(t)}^n)\sqrt{t-\eta_n(t)}(\zeta_{[nt]}-\xi_{[nt]})\\
    &+\sum_{k=0}^{[nt]-1} (b_1'a)(P_{t_k}^n,Q_{t_k}^n)(t_{k+1}-t_k)(\zeta_{k}^2-\xi_k^2)+(b_1'a)(P_{\eta_n(t)}^n,Q_{\eta_n(t)}^n)(t-\eta_n(t))(\zeta_{[nt]}^2-\xi_{[nt]}^2)\\
    &+\sum_{k=0}^{[nt]-1}(v_1'a+b_1'u)(P_{t_k}^n,Q_{t_k}^n)(t_{k+1}-t_k)^{3/2}(\zeta_{k}-\xi_k) \\
    &+(v_1'a+b_1'u)(P_{\eta_n(t)}^n,Q_{\eta_n(t)}^n)(t-\eta_n(t))^{3/2}(\zeta_{[nt]}-\xi_{[nt]})\\
    &+\sum_{k=0}^{[nt]-1}\sum_{\alpha+\beta=3,\alpha,\beta\in\mathbb{N}}R_2^{\alpha,\beta}(P_{t_k}^n,Q_{t_k}^n,\bar{\lambda}_1(t_{k+1}-t_k),\bar{\lambda}_2\Delta\hat{W}_k)(t_{k+1}-t_k)^{\alpha+\beta/2}(\zeta_{k}^\beta-\xi_k^\beta)\\
    &+\sum_{\alpha+\beta=3,\alpha,\beta\in\mathbb{N}}R_2^{\alpha,\beta}(P_{\eta_n(t)}^n,Q_{\eta_n(t)}^n,\bar{\lambda}_1(t-\eta_n(t)),\bar{\lambda}_2\Delta\hat{W}_{[nt],t})(t-\eta_n(t))^{\alpha+\beta/2}(\zeta_{[nt]}^\beta-\xi_{[nt]}^\beta).
\end{align*}

\subsection{Asymptotic error distribution of symplectic methods} \label{3.2}
Building on the preparation in \Cref{3.1}, we give the asymptotic error distribution of the symplectic Euler method in the following theorem, and subsequently extend the result to a class of symplectic methods for the $2d$-dimensional SHS (see \Cref{mainthmmulti}). The normalization constant is chosen as $\sqrt{n}$ since the strong convergence order of the class of symplectic methods we consider is 1/2 in this case.
\begin{thm} \label{mainthm}
    Let $U_P^n(t):=\sqrt{n}(P_t^n-P_t)$ and $U_Q^n(t):=\sqrt{n}(Q_t^n-Q_t)$, 
    where  $(P,Q)$ is the solution to \eqref{Generalequ} and  $(P^n,Q^n)$ is the continuous numerical solution corresponding to the symplectic Euler method. Then under Assumption \ref{assumption1}, we have $(U_P^n,U_Q^n)\Rightarrow^{stably} U:=(U_P,U_Q)$ in $\mathcal{C}([0,T];\mathbb{R}^{2})$ and $U$ satisfies 
\begin{align} \label{symEulerU}
        dU_P(t) =& \Big[(f+\frac{1}{2}a_1'a+\frac{1}{2}a_2'b)_1'(P_t,Q_t)U_P(t) +(f+\frac{1}{2}a_1'a+\frac{1}{2}a_2'b)_2'(P_t,Q_t)U_Q(t)\Big]dt \nonumber \\
        &+ \left[a_1'(P_t,Q_t)U_P(t)+a_2'(P_t,Q_t)U_Q(t)\right]dW_t+\frac{1}{\sqrt{2}} (a_1'a-a_2'b)(P_t,Q_t)dB_t, \nonumber \\
        dU_Q(t) =& \Big[(g+\frac{1}{2}b_1'a+\frac{1}{2}b_2'b)_1'(P_t,Q_t)U_P(t) +(g+\frac{1}{2}b_1'a+\frac{1}{2}b_2'b)_2'(P_t,Q_t)U_Q(t)\Big]dt \nonumber \\
        &+\Big[b_1'(P_t,Q_t)U_P(t)+b_2'(P_t,Q_t)U_Q(t)\Big]dW_t+\frac{1}{\sqrt{2}} (b_1'a-b_2'b)(P_t,Q_t)dB_t,
\end{align}
with initial value $(U_P(0),U_Q(0))=(0,0)$, where $B$ is a Brownian motion independent of $W$.
\end{thm}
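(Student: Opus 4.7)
My strategy is to subtract the exact SDE \eqref{p}--\eqref{q} from the integral representations \eqref{pn}--\eqref{qn}, multiply by $\sqrt n$, and pass to the limit in the resulting integral equation via \Cref{propmain}. The single analytic input that does the heavy lifting is the stable convergence
\begin{align*}
\sqrt n\int_0^\cdot h_n(s)(W_s-W_{\eta_n(s)})\,dW_s \;\Rightarrow^{stably}\; \tfrac{1}{\sqrt 2}\int_0^\cdot h(s)\,dB_s
\end{align*}
whenever $h_n\to h$ uniformly in probability, where $B$ is a standard Brownian motion on an extension of $\Omega$ independent of $W$; this identifies the extra Brownian driver of \eqref{symEulerU}. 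By \Cref{propOmeganc} the probability of $(\Omega^n)^c$ decays super-polynomially, so everything may be carried out on $\Omega^n$.

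Using $u+a_1'a=f+\tfrac12 a_1'a+\tfrac12 a_2'b=:\bar F$ and $v+b_1'a=g+\tfrac12 b_1'a+\tfrac12 b_2'b=:\bar G$, the differences read
\begin{align*}
P_t^n-P_t &= \int_0^t\!\bigl[\bar F(P_{\eta_n(s)}^n,Q_{\eta_n(s)}^n)-\bar F(P_s,Q_s)\bigr]ds+\int_0^t\!\bigl[a(P_{\eta_n(s)}^n,Q_{\eta_n(s)}^n)-a(P_s,Q_s)\bigr]dW_s+R_P^n(t),
\end{align*}
and analogously for $Q_t^n-Q_t$ with $\bar G$ and $b$. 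After scaling by $\sqrt n$, a first-order Taylor expansion of each integrand around $(P_{\eta_n(s)},Q_{\eta_n(s)})$ produces the $U^n(\eta_n(s))$-linear drift and diffusion terms appearing in \eqref{symEulerU}, while the quadratic remainder is $o_{\mathbb P}(1)$ uniformly on $[0,T]$ by uniform-in-$n$ $L^p$ moment bounds on $(P^n,Q^n)$ that I would establish upfront from the explicit representation in \Cref{mainprop}.

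The heart of the proof is the identification of the three non-trivial contributions that together produce the $dB$ coefficient. (i) The drift discretization $\sqrt n\int_0^t[\bar F(P_{\eta_n(s)},Q_{\eta_n(s)})-\bar F(P_s,Q_s)]ds$ is shown to be $o_{\mathbb P}(1)$ by Taylor expansion in $(P,Q)$ and stochastic Fubini, together with the identity $\mathbb E\bigl|\int_0^t(W_s-W_{\eta_n(s)})ds\bigr|^2=O(1/n^2)$. (ii) The noise discretization $\sqrt n\int_0^t[a(P_{\eta_n(s)},Q_{\eta_n(s)})-a(P_s,Q_s)]dW_s$, after first-order Taylor expansion and substitution of the leading-order It\^o increments $P_s-P_{\eta_n(s)}\approx a(W_s-W_{\eta_n(s)})$, $Q_s-Q_{\eta_n(s)}\approx b(W_s-W_{\eta_n(s)})$, reduces to $-\sqrt n\int_0^t(a_1'a+a_2'b)(P_s,Q_s)(W_s-W_{\eta_n(s)})dW_s$ and converges stably to $-\tfrac{1}{\sqrt 2}\int_0^t(a_1'a+a_2'b)dB_s$. (iii) The implicit correction $\sqrt n R_{P,1}^n=2\sqrt n\int_0^t(a_1'a)(P_{\eta_n(s)}^n,Q_{\eta_n(s)}^n)(W_s-W_{\eta_n(s)})dW_s$ converges stably to $\sqrt 2\int_0^t(a_1'a)(P_s,Q_s)dB_s$. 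The algebra $-\tfrac{1}{\sqrt 2}(a_1'a+a_2'b)+\sqrt 2\,a_1'a=\tfrac{1}{\sqrt 2}(a_1'a-a_2'b)$ produces exactly the stated coefficient of $dB_t$ in the $P$-equation, and the parallel computation for $Q$---noting that $R_{Q,1}^n$ carries the coefficient $b_1'a$, the hallmark of the implicit position of $P^n_{t_{k+1}}$ in the scheme---yields $\tfrac{1}{\sqrt 2}(b_1'a-b_2'b)$ with the \emph{same} Brownian motion $B$. The remaining pieces $\sqrt n(R_{P,2}^n+R_{P,3}^n)$ and their $Q$-analogues are shown to vanish uniformly in probability by combining the polynomial growth bounds on $R_1^{\alpha,\beta},R_2^{\alpha,\beta}$ from \Cref{mainprop}, the truncation estimates \eqref{truncated1}--\eqref{truncated2}, and the moment bounds on $(P^n,Q^n)$.

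At this point the integral equation for $U^n=(U_P^n,U_Q^n)$ has the form required by \Cref{propmain}, with the Lipschitz map $F$ being the linear one built from the drift and diffusion coefficients of \eqref{symEulerU}, the driver being $(W,B)$, and the forcing term $\widetilde U^n$ being the sum of the non-linear contributions above, which converges in probability (resp.\ stably) to a continuous limit. Applying \Cref{propmain} and upgrading weak to stable convergence via \Cref{lemmastable} yields $(U_P^n,U_Q^n)\Rightarrow^{stably}(U_P,U_Q)$ in $\mathcal C([0,T];\mathbb R^2)$ solving \eqref{symEulerU}. The principal obstacle I anticipate is twofold: first, establishing uniform-in-$n$ $L^p$ moments for the implicit numerical solution $(P^n,Q^n)$ strong enough to absorb the polynomial-growth remainders; and second, arranging the stable convergences in (ii), (iii) and their $Q$-analogues \emph{jointly} on a single extension with one common Brownian motion $B$, so that the limit \eqref{symEulerU} exhibits the coupled structure asserted in the theorem rather than several independent copies of $B$.
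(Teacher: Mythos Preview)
Your proposal is correct and follows the same overall route as the paper's proof: subtract, scale by $\sqrt n$, identify the three contributions to the $dB$-term via the limit $\sqrt n\int_0^\cdot(W_s-W_{\eta_n(s)})\,dW_s\Rightarrow\tfrac{1}{\sqrt 2}B$, and close via Kurtz--Protter plus \Cref{lemmastable}. The one difference is the intermediate split: the paper inserts the \emph{numerical} solution at \emph{continuous} time $(P_s^n,Q_s^n)$, obtaining linear terms in $U^n(s)$ with coefficients at $(P_s,Q_s)$ so that \Cref{rmk1} applies directly, whereas you insert the \emph{exact} solution at \emph{discrete} time $(P_{\eta_n(s)},Q_{\eta_n(s)})$; both work, and your concern about arranging a single common $B$ is resolved exactly as in the paper by observing that every stable limit in (ii), (iii) and their $Q$-analogues factors through the one sequence $Y_n(t)=\sqrt n\int_0^t(W_s-W_{\eta_n(s)})\,dW_s$.
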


Note that $U_P^n=\sqrt{n}(P^n-P)=\sqrt{n}(P^n-P)\mathbf{1}_{\Omega^n}+\sqrt{n}(P_0-P)\mathbf{1}_{(\Omega^n)^c}$. 
It follows from \Cref{propOmeganc} that
\begin{align*}
    \sqrt{n}\mathbb{E}\Big[ \sup_{t\in[0,T]}|P_0-P_t|\mathbf{1}_{(\Omega^n)^c} \Big] \le \sqrt{n}\Big(\mathbb{E}\Big[\sup_{t\in[0,T]}|P_0-P_t|^2\Big]\Big)^{1/2}(\mathbb{P}(\Omega^n)^c)^{1/2}\to0,
\end{align*}
which yields $\sqrt{n}(P_0-P)\mathbf{1}_{(\Omega^n)^c} \to^\mathbb{P} 0$ in $\mathcal{C}([0,T];\mathbb{R})$. 
This allows us to assume that $P_t^n$ and $Q_t^n$ satisfy equations \eqref{pn} and \eqref{qn} throughout $\Omega$ rather than only on $\Omega^n$, then it remains to show that $(U_P^n,U_Q^n) \Rightarrow (U_P,U_Q)$ in this setting. 

The proof of \Cref{mainthm} requires an analysis of the remainder terms in the integral forms \eqref{pn} and \eqref{qn} of $(P_t^n,Q_t^n)$. To this end, we now introduce the following proposition.

\begin{prop} \label{propconverge0}
    Let \Cref{assumption1} hold, then $R_P^n,R_Q^n \to^{\mathbb{P}} 0$ and $\sqrt{n}R_{P,2}^n,\sqrt{n}R_{P,3}^n,\sqrt{n}R_{Q,2}^n$, $\sqrt{n}R_{Q,3}^n\to^{\mathbb{P}}0$ in $\mathcal{C}([0,T];\mathbb{R})$ as $n\to \infty$.
\end{prop}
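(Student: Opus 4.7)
The plan is to handle the building blocks $R_{P,j}^n$ and $R_{Q,j}^n$ ($j=1,2,3$) separately, as each has a distinct scaling and calls for a different tool. Since $\mathbb{P}((\Omega^n)^c)$ decays exponentially by \Cref{propOmeganc}, the proof may be carried out on $\Omega^n$, where the integral representations \eqref{pn}--\eqref{qn} hold. A preliminary step is to establish uniform-in-$n$ moment bounds $\mathbb{E}[\sup_{t\in[0,T]}\|(P_t^n,Q_t^n)\|^{2q}]\le C_q$ for every $q\ge 1$, via the Lipschitz continuity in \Cref{assumption1}(i), the bound $|\zeta_k|\le A_n$, BDG, and discrete Gr\"onwall applied to \eqref{pn}--\eqref{qn}; these let me absorb polynomial-growth prefactors into constants throughout.

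For $R_{P,1}^n$, BDG yields $\mathbb{E}[\sup_t|R_{P,1}^n(t)|^2]\le C\,\mathbb{E}\int_0^T (a_1'a)^2(P_{\eta_n(s)}^n,Q_{\eta_n(s)}^n)(W_s-W_{\eta_n(s)})^2\,ds\le C/n$ via Cauchy--Schwarz and $\mathbb{E}[(W_s-W_{\eta_n(s)})^4]\le 3/n^2$, so $R_{P,1}^n\to 0$ in $L^2$ without normalization; the same argument handles $R_{Q,1}^n$. For $\sqrt n\,R_{P,2}^n$ I would group the summands: the deterministic $(t_{k+1}-t_k)^2$ contributions are $O(1/n)$ pathwise; the $(t_{k+1}-t_k)\Delta W_k$ pieces form a discrete martingale and Doob's inequality plus orthogonality give an $L^2$ bound of order $1/n$; and the Taylor remainders with $(\alpha,\beta)\in\{(3,0),(2,1),(1,2)\}$ admit the crude bound $|R_1^{\alpha,\beta}|(1/n)^\alpha(1/\sqrt n)^\beta$ per term, so after summing $\sim n$ of them the total is of order $n^{1-\alpha-\beta/2}\le n^{-1}$, all of which tend to $0$ after multiplication by $\sqrt n$.

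The delicate piece is the borderline $(\alpha,\beta)=(0,3)$ remainder, for which $\sum_k|\Delta W_k|^3$ is already of $L^1$-size $n^{-1/2}$, so the absolute-value bound just barely fails against the $\sqrt n$ normalization. I would decompose
\[
R_1^{0,3}(P_{t_k}^n,Q_{t_k}^n,\lambda_1 h_1,\lambda_2 h_2)=R_1^{0,3}(P_{t_k}^n,Q_{t_k}^n,0,0)+\Delta_k.
\]
The first, $\mathcal{F}_{t_k}$-measurable factor $c_k^{(0)}:=R_1^{0,3}(P_{t_k}^n,Q_{t_k}^n,0,0)$ is independent of $\Delta W_k$, so $M_K:=\sum_{k<K}c_k^{(0)}\Delta W_k^3$ is a discrete martingale (since $\mathbb{E}[\Delta W_k^3\mid\mathcal{F}_{t_k}]=0$); conditional orthogonality and Doob give $\mathbb{E}[\sup_K|M_K|^2]\le C\sum_k\mathbb{E}[(c_k^{(0)})^2]\mathbb{E}[\Delta W_k^6]=O(1/n^2)$, so $\sqrt n\cdot O(1/n)\to 0$. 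For the residual $\Delta_k$ I would localize to $E_M=\{\sup_k\|(P_{t_k}^n,Q_{t_k}^n)\|\le M\}$ (of probability at least $1-O(M^{-2})$ by Markov and the moment bound); on $E_M\cap\Omega^n$ one has $|\lambda_1 h_1|\le 1/n$ and $|\lambda_2 h_2|\le A_n/\sqrt n\to 0$, so uniform continuity of $R_1^{0,3}$ on the relevant compact set yields $\sup_k|\Delta_k|\to 0$ as $n\to\infty$, and therefore $\sqrt n\sum_k|\Delta_k||\Delta W_k|^3\le\sup_k|\Delta_k|\cdot\sqrt n\cdot O_p(n^{-1/2})=o_p(1)$; sending $M\to\infty$ closes the argument.

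Finally $\sqrt n\,R_{P,3}^n$ is controlled by the truncation estimates \eqref{truncated1}--\eqref{truncated2}: for every $\beta\ge 1$, $p\ge 1$, $\epsilon\in(0,\rho/p)$, $(\mathbb{E}|\xi_k^\beta-\zeta_k^\beta|^p)^{1/p}\le C(1/n)^{\rho/p-\epsilon}$. Choosing $p=1$, applying Cauchy--Schwarz to the polynomially bounded prefactors, and summing the $O(n)$ summands in $R_{P,3}^n$ multiplied by $\sqrt n$ yields a bound of order $\sqrt n\cdot n\cdot n^{-1/2-\rho+\epsilon}=n^{1-\rho+\epsilon}\to 0$ since $\rho>2$ and $\epsilon$ is small. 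The analysis of $R_{Q,1}^n,R_{Q,2}^n,R_{Q,3}^n$ is identical up to relabelling of coefficients ($b,\,b_1'a,\,v_1'$ etc.\ in place of $a,\,a_1'a,\,u_1'$). The principal obstacle is precisely the $(0,3)$ Taylor remainder: it sits at the critical scale $n^{-1/2}$, and the trick is to combine the martingale structure of $\Delta W_k^3$ with localization-plus-continuity to handle the (possibly non-Lipschitz) Taylor coefficient.
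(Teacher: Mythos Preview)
Your proof is correct and follows the same overall decomposition as the paper: treat $R_{P,1}^n$ via BDG, $\sqrt n\,R_{P,3}^n$ via the truncation estimates \eqref{truncated1}--\eqref{truncated2}, and $\sqrt n\,R_{P,2}^n$ by term-wise analysis. For $R_{P,1}^n$ and $R_{P,3}^n$ your arguments essentially coincide with the paper's.

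For $\sqrt n\,R_{P,2}^n$ the routes diverge. The paper rewrites each summation as a continuous-time It\^o integral (e.g.\ $(t_{k+1}-t_k)(W_{t_{k+1}}-W_{t_k})=\int_{t_k}^{t_{k+1}}(s-t_k)\,dW_s+\int_{t_k}^{t_{k+1}}(W_s-W_{t_k})\,ds$) and then applies BDG to the resulting integrals with adapted integrands, asserting that ``the other terms can be shown to vanish in the same way.'' You instead use the discrete-martingale viewpoint and, crucially, single out the borderline $(\alpha,\beta)=(0,3)$ Taylor remainder for separate treatment. This is well spotted: the coefficient $R_1^{0,3}(P_{t_k}^n,Q_{t_k}^n,0,\lambda_2\Delta\hat W_k)$ is \emph{not} $\mathcal F_{t_k}$-measurable (it sees the full truncated increment $\Delta\hat W_k$), so the paper's ``rewrite via It\^o formula'' does not literally apply to that term without an extra argument. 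Your freeze-at-zero / martingale / localise-for-the-residue device supplies exactly such an argument. With the $C^4$ regularity in Assumption~\ref{assumption1}(ii) one could alternatively replace the localisation step by a Lipschitz bound on $R_1^{0,3}$ in $(h_1,h_2)$, giving $|\Delta_k|\le C(1+\|(P_{t_k}^n,Q_{t_k}^n)\|^\gamma)A_n/\sqrt n$ directly and avoiding the tightness-in-$M$ bookkeeping; either way, your treatment is sound and in fact more explicit than the paper's on this point.
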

\begin{proof}
For $R_{P,1}^n$, applying the Burkholder--Davis--Gundy inequality and the H\"older inequality yields
\begin{align*}
    &\mathbb{E} \bigg[ \sup_{t\in [0,T]}\Big| \int_0^t(a_1'a)(P_{\eta_n(s)}^n,Q_{\eta_n(s)}^n)(W_s-W_{\eta_n(s)})dW_s \Big|^2 \bigg] \\
    \le &\ C\mathbb{E}\left[ \int_0^T \big|(a_1'a)(P_{\eta_n(s)}^n,Q_{\eta_n(s)}^n)\big|^2(W_s-W_{\eta_n(s)})^2ds\right] \\
    \le &\ C \int_0^T \Big(\mathbb{E}\Big[\big|(a_1'a)(P_{\eta_n(s)}^n,Q_{\eta_n(s)}^n)\big|^4\Big] \Big)^{1/2}\Big(\mathbb{E}\Big[(W_s-W_{\eta_n(s)})^4\Big]\Big)^{1/2}ds \\
    \le& \ C\int_0^T\Big(\mathbb{E}\Big[(W_s-W_{\eta_n(s)})^4\Big]\Big)^{1/2}ds 
    = C\int_0^T\big(s-\eta_n(s)\big)ds \to 0, \quad n\to \infty,
\end{align*}
where we use the fact that for any $q\ge1$, there exists $N\in \mathbb{N}^+$ and a constant $C(q)>0$ such that for all $n\ge N$,
   $ \sup_{s\in[0,T]}\mathbb{E}[|a_1'a(P_{\eta_n(s)}^n,Q_{\eta_n(s)}^n)|^q] 
   \le C(q).$
This follows from the polynomial growth of $a_1'a$ and the uniform moment boundedness $\sup_{k=0,\dots,nT} \mathbb{E}[\|(P_{t_k}^n,Q_{t_k}^n)\|^{q_0}]\le C(q_0)$ for any $q_0\geq 1$ and some constant $C(q_0)>0$.

For $\sqrt{n}R_{P,2}^n$, each summation term in its expression can be rewritten in integral form via It\^o formula. For example, it follows from
\begin{align*}
    (t-s)(W_t-W_s) = \int_s^t (\tau-s)dW_\tau + \int_s^t (W_\tau-W_s)d\tau
\end{align*}
that
\begin{align} \label{sumtointegral}
        &\sqrt{n}\sum_{k=0}^{[nt]-1}(u_1'a+a_1'u)(P_{t_k}^n,Q_{t_k}^n)(t_{k+1}-t_k)(W_{t_{k+1}}-W_{t_k}) \nonumber \\
        &+\sqrt{n}(u_1'a+a_1'u)(P_{\eta_n(t)}^n,Q_{\eta_n(t)}^n)(t-\eta_n(t))(W_t-W_{\eta_n(t)}) \nonumber \\
        =&\ \sqrt{n}\int_0^t(u_1'a+a_1'u)(P_{\eta_n(s)}^n,Q_{\eta_n(s)}^n)(s-\eta_n(s))dW_s \nonumber \\
        &+\sqrt{n}\int_0^t (u_1'a+a_1'u)(P_{\eta_n(s)}^n,Q_{\eta_n(s)}^n)(W_s-W_{\eta_n(s)})ds.
\end{align}
In fact, the second term on the right-hand side of \eqref{sumtointegral} has an equivalent integral representation, specifically, 
\begin{align*}
    &\sqrt{n}\int_0^t (u_1'a+a_1'u)(P_{\eta_n(s)}^n,Q_{\eta_n(s)}^n)(W_s-W_{\eta_n(s)})ds \\=&\, \sqrt{n}\int_0^t (u_1'a+a_1'u)(P_{\eta_n(s)}^n,Q_{\eta_n(s)}^n)\int_{\eta_n(s)}^sdW_\tau ds \\
    =&\, \sqrt{n}\int_0^t\int_\tau^{\eta_n(\tau)+1/n}(u_1'a+a_1'u)(P_{\eta_n(s)}^n,Q_{\eta_n(s)}^n)dsdW_\tau\\
    =&\, \sqrt{n}\int_0^t(u_1'a+a_1'u)(P_{\eta_n(\tau)}^n,Q_{\eta_n(\tau)}^n)(\eta_n(\tau)+\frac{1}{n}-\tau)dW_\tau.
\end{align*}
Thus, by a similar analysis as for $R_{P,1}^n$, we have
\begin{align*}
    &n\mathbb{E} \bigg[ \sup_{t\in [0,T]}\Big| \int_0^t(u_1'a+a_1'u)(P_{\eta_n(s)}^n,Q_{\eta_n(s)}^n)(s-\eta_n(s))dW_s \Big|^2 \bigg] \\
    +& n\mathbb{E} \bigg[ \sup_{t\in [0,T]}\Big| \int_0^t(u_1'a+a_1'u)(P_{\eta_n(s)}^n,Q_{\eta_n(s)}^n)(W_s-W_{\eta_n(s)})ds \Big|^2 \bigg]
    \le Cn\int_0^T (s-\eta_n(s))^2ds \to 0.
\end{align*}
As a result, $\sqrt{n}R_{P,2}^n\to^{\mathbb{P}}0$ in $\mathcal{C}([0,T];\mathbb{R})$, 
as the other terms in the expression of $\sqrt{n}R_{P,2}^n$ can be shown to vanish in the same way. 

For $\sqrt{n}R_{P,3}^n$, we next show that the first term converges to 0 in probability in $\mathcal{C}([0,T];\mathbb{R})$ as an illustrative case. Combining \Cref{assumption1} and \eqref{truncated1} leads to
\begin{align*}
    &\sqrt{n}\,\mathbb{E}\bigg[ \sup_{t\in[0,T]}\Big|\sum_{k=0}^{[nt]-1}a(P_{t_k}^n,Q_{t_k}^n)\frac{1}{\sqrt{n}}(\zeta_{k}-\xi_k)+a(P_{\eta_n(t)}^n,Q_{\eta_n(t)}^n)\sqrt{t-\eta_n(t)}(\zeta_{[nt]}-\xi_{[nt]})\Big| \bigg] \\
    \le &\,  \sqrt{n}\,\mathbb{E}\bigg[ \sup_{t\in[0,T]} \frac{1}{\sqrt{n}}\sum_{k=0}^{[nt]}\big|a(P_{t_k}^n,Q_{t_k}^n)\big|\big|\zeta_{k}-\xi_k\big| \bigg]
    \le C\mathbb{E}\bigg[ \sum_{k=0}^{nT}\big|a(P_{t_k}^n,Q_{t_k}^n)\big|\big|\zeta_{k}-\xi_k\big| \bigg]\\
    \le & \, C\sum_{k=0}^{nT}\left( \mathbb{E}\big[a^2(P_{t_k}^n,Q_{t_k}^n)\big] \right)^{1/2}\left( \mathbb{E}|\zeta_{k}-\xi_k|^2 \right)^{1/2}
    \le C(nT+1)\big(\frac{1}{n}\big)^{\rho/2},
\end{align*}
which therefore tends to 0 due to $\rho>2$.
Similarly, the remainder terms converge to 0 by \eqref{truncated1} and \eqref{truncated2}. 

Since $R_P^n=R_{P,1}^n+R_{P,2}^n+R_{P,3}^n$, we conclude that $R_P^n \to^{\mathbb P} 0$ in $\mathcal{C}([0,T];\mathbb{R})$.  
Using the preceding argument, it can be analogously proved that $R_{Q}^n,\sqrt{n}R_{Q,2}^n,\sqrt{n}R_{Q,3}^n\to^{\mathbb{P}}0$ in $\mathcal{C}([0,T];\mathbb{R})$. 
\end{proof}

With previous preparation, we now give the proof of \Cref{mainthm}.
\begin{proof}[Proof of \Cref{mainthm}]

Given that \eqref{pn}, \eqref{qn} hold and $R_P^n, R_Q^n \to^{\mathbb{P}}0$ in $\mathcal{C}([0,T];\mathbb{R})$, it follows from \Cref{propmain} that $(P^n, Q^n, W) \to^{\mathbb{P}} (P, Q, W)$ as $n \to \infty$. Consequently, we also have $(P^n, Q^n, W) \Rightarrow (P, Q, W)$.
By \eqref{p} and \eqref{pn},
\begin{align*}
    \sqrt{n}(P_t^n-P_t) =& \,\sqrt{n}\int_0^t [(u+a_1'a)(P_{\eta_n(s)}^n,Q_{\eta_n(s)}^n)-(u+a_1'a)(P_s,Q_s)]ds \\
    &+\sqrt{n} \int_0^t [a(P_{\eta_n(s)}^n,Q_{\eta_n(s)}^n)-a(P_s,Q_s)]dW_s + \sqrt{n}R_P^n(t) \\
    =:& \,I_1^n(t)+I_2^n(t)+\sqrt{n}R_P^n(t).
\end{align*}

\textit{(\romannumeral1) Reformulation of $I_1^n$.} In fact,
\begin{align*}
    I_1^n(t) =& \,\sqrt{n}\int_0^t [(u+a_1'a)(P_s^n,Q_s^n)-(u+a_1'a)(P_s,Q_s)]ds \\
    &- \sqrt{n}\int_0^t [(u+a_1'a)(P_s^n,Q_s^n)-(u+a_1'a)(P_{\eta_n(s)}^n,Q_{\eta_n(s)}^n)]ds \\
    =& \int_0^t [(u+a_1'a)_1'(P_s,Q_s)U_P^n(s)+(u+a_1'a)_2'(P_s,Q_s)U_Q^n(s)]ds -I_{11}^n(t)-I_{12}^n(t)+I_{1_R}^n(t),
\end{align*}
where
\begin{align*}
    &I_{11}^n(t):= \sqrt{n}\int_0^t (u+a_1'a)_1'(P_{\eta_n(s)}^n,Q_{\eta_n(s)}^n)(P_s^n-P_{\eta_n(s)}^n)ds,\\
    &I_{12}^n(t):= \sqrt{n}\int_0^t (u+a_1'a)_2'(P_{\eta_n(s)}^n,Q_{\eta_n(s)}^n)(Q_s^n-Q_{\eta_n(s)}^n)ds,
\end{align*}
and 
\begin{align} \label{I1Rn}
    I_{1_R}^n(t):=&\  \sqrt{n} \int_0^t \int_0^1 (1-\lambda)(X_s^n-X_s)^{\top} D^2(u+a_1'a)(\Theta_1(\lambda,s))(X_s^n-X_s)d\lambda ds \nonumber \\
    &- \sqrt{n} \int_0^t \int_0^1 (1-\lambda)(X_s^n-X_{\eta_n(s)}^n)^{\top} D^2(u+a_1'a)(\Theta_2(\lambda,s))(X_s^n-X_{\eta_n(s)}^n)d\lambda ds,
\end{align}
with $X^n:=(P^n,Q^n)^{\top}$, $X:=(P,Q)^{\top}$, $\Theta_1(\lambda,s):=X_s+\lambda(X_s^n-X_s)$, $\Theta_2(\lambda,s):=X_{\eta_n(s)}^n+\lambda(X_s^n-X_{\eta_n(s)}^n)$,
and $D^2(u+a_1'a)$ denoting the Hessian matrix of $u+a_1'a$.
Plugging \eqref{explicitp} into $I_{11}^n$ gives
\begin{align*}
    I_{11}^n(t) =&\  \sqrt{n} \int_0^t ((u+a_1'a)_1'u)(P_{\eta_n(s)}^n,Q_{\eta_n(s)}^n)(s-\eta_n(s))ds \\
    &+\sqrt{n}\int_0^t ((u+a_1'a)_1'a)(P_{\eta_n(s)}^n,Q_{\eta_n(s)}^n)(W_s-W_{\eta_n(s)})ds \\
    &+\sqrt{n}\int_0^t ((u+a_1'a)_1'a)(P_{\eta_n(s)}^n,Q_{\eta_n(s)}^n)(s-\eta_n(s))^{1/2}(\zeta_{[ns]}-\xi_{[ns]})ds \\
    &+\sum_{\alpha+\beta=2,3,\alpha,\beta\in\mathbb{N}} \sqrt{n}\int_0^t F_{\alpha,\beta}(P_{\eta_n(s)}^n,Q_{\eta_n(s)}^n)(s-\eta_n(s))^\alpha(\Delta \hat{W}_{[ns],s})^\beta ds,
\end{align*}
with $F_{\alpha,\beta}$ polynomially growing. By a similar argument to \Cref{propconverge0}, we arrive at $I_{11}^n\to^{\mathbb{P}}0$ and  $I_{11}^n \Rightarrow 0$ in $\mathcal{C}([0,T];\mathbb{R})$ as $n\to \infty$.
We also obtain $I_{12}^n \Rightarrow 0$ in $\mathcal{C}([0,T];\mathbb{R})$  by plugging \eqref{explicitq} into $I_{12}^n$.
For $I_{1_R}^n$, it follows from the polynomial growth of $D^2(u+a_1'a)$ that 
\begin{align*}
    &\ n\mathbb{E}\bigg[ \sup_{t\in[0,T]} \Big( \int_0^t\int_0^1(1-\lambda)(X^n_s-X_s)^{\top}D^2(u+a_1'a)(\Theta_1(\lambda,s))(X^n_s-X_s)d\lambda ds \Big)^2 \bigg] \\
   \le &\  nT\,\mathbb{E}\bigg[ \int_0^T \int_0^1(1-\lambda)^2\Big((X^n_s-X_s)^{\top}D^2(u+a_1'a)(\Theta_1(\lambda,s))(X^n_s-X_s)\Big)^2d\lambda ds \bigg] \\
    \le &\ nC\,\mathbb{E}\bigg[ \int_0^T\big(\|X^n_s-X_s\|^4+\|X_s\|^{2\gamma}\|X^n_s-X_s\|^4+\|X^n_s-X_s\|^{2\gamma+4}\big)ds \bigg] \to 0,
\end{align*}
since for any $q\ge1$, there exists $N\in \mathbb{N}^+$ and constant $C>0$ such that for all $n\ge N$,
\begin{align*}
    \sup_{t\in[0,T]}\big(\mathbb{E}[\|X_t^n-X_t\|^{2q}]\big)^{1/2q} \le C(\frac{1}{n})^{1/2}.
\end{align*} 
In the same manner, the fact that 
\begin{align*}
    \sup_{t\in [0,T]}\big(\mathbb{E}[\|X_t^n-X_{\eta_n(t)}^n\|^{2q}]\big)^{1/2q} \le C(\frac{1}{n})^{1/2}
\end{align*}
leads to the estimate for the second term of $I_{1_R}^n$ in \eqref{I1Rn} as follows:
\begin{align*}
    &n\mathbb{E}\bigg[ \sup_{t\in[0,T]} \Big( \int_0^t\int_0^1(1-\lambda)(X^n_s-X_{\eta_n(s)}^n)^{\top}D^2(u+a_1'a)(\Theta_2(\lambda,s))(X^n_s-X_{\eta_n(s)}^n)d\lambda ds \Big)^2 \bigg] \\
    \le &nC\mathbb{E}\bigg[ \int_0^T\big(\|X^n_s-X_{\eta_n(s)}^n\|^4+\|X_{\eta_n(s)}^n\|^{2\gamma}\|X^n_s-X_{\eta_n(s)}^n\|^4+\|X^n_s-X_{\eta_n(s)}^n\|^{2\gamma+4}\big)ds \bigg] \to 0.
\end{align*}
Thus, by setting $M_1^n:=-I_{11}^n-I_{12}^n+I_{1_R}^n$, we conclude that
\begin{align*}
    I_1^n(t) = \int_0^t [(u+a_1'a)_1'(P,Q)U_P^n+(u+a_1'a)_2'(P,Q)U_Q^n]ds +M_{1}^n(t),
\end{align*}
where $M_{1}^n\Rightarrow0$ in $\mathcal{C}([0,T];\mathbb{R})$ as $n\to \infty$.

\textit{(\romannumeral2) Reformulation of $I_2^n$.} Observe that
\begin{align*}
    I_2^n(t) =& \,\sqrt{n}\int_0^t [a(P_s^n,Q_s^n)-a(P_s,Q_s)]dW_s - \sqrt{n}\int_0^t [a(P_s^n,Q_s^n)-a(P_{\eta_n(s)}^n,Q_{\eta_n(s)}^n)]dW_s \\
    =& \int_0^t [a_1'(P,Q)U_P^n+a_2'(P,Q)U_Q^n]dW_s -I_{21}^n(t)-I_{22}^n(t)+ I_{2_R}^n(t),
\end{align*}
where 
\begin{align*}
    &I_{21}^n(t):=\sqrt{n}\int_0^t a_1'(P_{\eta_n(s)}^n,Q_{\eta_n(s)}^n)(P_s^n-P_{\eta_n(s)}^n)dW_s,\\
    &I_{22}^n(t):=\sqrt{n}\int_0^t a_2'(P_{\eta_n(s)}^n,Q_{\eta_n(s)}^n)(Q_s^n-Q_{\eta_n(s)}^n)dW_s,
\end{align*}
and
\begin{align} \label{I2Rn}
    I_{2_R}^n(t):=&\, \sqrt{n} \int_0^t \int_0^1 (1-\lambda)(X_s^n-X_s)^{\top}D^2a(\Theta_1(\lambda,s))(X_s^n-X_s)d\lambda dW_s \nonumber \\
    &- \sqrt{n} \int_0^t \int_0^1 (1-\lambda)(X_s^n-X_{\eta_n(s)}^n)^{\top}D^2a(\Theta_2(\lambda,s))(X_s^n-X_{\eta_n(s)}^n)d\lambda dW_s.
\end{align}
Plugging \eqref{explicitp} into $I_{21}^n$ yields
\begin{equation} \label{I_21^n}
    \begin{split}
        I_{21}^n(t) = \sqrt{n}\int_0^t (a_1'a)(P_{\eta_n(s)}^n,Q_{\eta_n(s)}^n)(W_s-W_{\eta_n(s)})dW_s +I_{21_R}^n(t),
    \end{split}
\end{equation}
where 
\begin{align*}
    I_{21_R}^n(t):= &\, \sqrt{n} \int_0^t (a_1'u)(P_{\eta_n(s)}^n,Q_{\eta_n(s)}^n)(s-\eta_n(s))dW_s \\
    &+ \sqrt{n}\int_0^t (a_1'a)(P_{\eta_n(s)}^n,Q_{\eta_n(s)}^n)(s-\eta_n(s))^{1/2}(\zeta_{[ns]}-\xi_{[ns]})dW_s \\
    &+\sum_{\alpha+\beta=2,3,\alpha,\beta\in\mathbb{N}} \sqrt{n}\int_0^t G_{\alpha,\beta}(P_{\eta_n(s)}^n,Q_{\eta_n(s)}^n)(s-\eta_n(s))^\alpha(\Delta \hat{W}_{[ns],s})^\beta dW_s,
\end{align*}
with $G_{\alpha,\beta}$ polynomially growing. 
By an argument analogous to that in \Cref{propconverge0}, we can prove that $I_{21_R}^n$ converges to 0 in probability in $\mathcal{C}([0,T];\mathbb{R})$, and hence also converges to 0 in distribution. For the first term of $I_{21}^n$ in \eqref{I_21^n}, let $Y_n(t):=\sqrt{n} \int_0^t(W_s-W_{\eta_n(s)})dW_s$, then
\begin{align*}
    \sqrt{n}\int_0^t (a_1'a)(P_{\eta_n(s)}^n,Q_{\eta_n(s)}^n)(W_s-W_{\eta_n(s)})dW_s=\int_0^t (a_1'a)(P_{\eta_n(s)}^n,Q_{\eta_n(s)}^n)dY_n(s).
\end{align*}
Applying \cite[Theorem 2.1]{Rootzen1980} gives $Y_n\Rightarrow \frac{1}{\sqrt{2}}B$, where $B$ is a Brownian motion independent of $W$. Combining this with the convergence $(P^n,Q^n)\Rightarrow(P,Q)$ yields $((a_1'a)(P^n,Q^n),Y_n)\Rightarrow((a_1'a)(P,Q),\frac{1}{\sqrt{2}}B)$, which relies on the temporal continuity of the solution process $(P, Q)$ and the Brownian motion $B$, as well as the continuity of the function $a_1'a$.
In addition, based on the goodness of the sequence $\{Y_n\}$ verified via \cite[Theorem 2.7]{Burdzy2010} and applying \cite[Lemma 3.2]{KurtzProtter1991b}, we obtain
\begin{align*}
    \int_0^{\cdot} (a_1'a)(P_{\eta_n(s)}^n,Q_{\eta_n(s)}^n)dY_n(s) \Rightarrow \frac{1}{\sqrt{2}}\int_0^{\cdot} (a_1'a)(P_s,Q_s)dB_s
\end{align*}
in $\mathcal{C}([0,T];\mathbb{R})$. 
In conclusion, $I_{21}^n\Rightarrow \frac{1}{\sqrt{2}}\int_0^{\cdot} (a_1'a)(P_s,Q_s)dB_s$ in $\mathcal{C}([0,T];\mathbb{R})$. Similarly, we arrive at $I_{22}^n \Rightarrow \frac{1}{\sqrt{2}} \int_0^{\cdot} (a_2'b)(P_s,Q_s)dB_s$ in $\mathcal{C}([0,T];\mathbb{R})$.
For $I_{2_R}^n$, 
by the Burkholder--Davis--Gundy inequality, 
\begin{align*}
    &\ n\mathbb{E}\bigg[ \sup_{t\in[0,T]} \Big| \int_0^t\int_0^1(1-\lambda)(X^n_s-X_s)^{\top}D^2a(\Theta_1(\lambda,s))(X^n_s-X_s)d\lambda dW_s \Big|^2 \bigg] \\
    \le &\  nC\,\mathbb{E}\bigg[  \int_0^T \Big( \int_0^1 (1-\lambda)(X^n_s-X_s)^{\top}D^2a(\Theta_1(\lambda,s))(X^n_s-X_s)d\lambda \Big)^2ds \bigg] \\
    \le&\ nC\,\mathbb{E}\bigg[ \int_0^T(\|X^n_s-X_s\|^4+\|X_s\|^{2\gamma}\|X^n_s-X_s\|^4+\|X^n_s-X_s\|^{2\gamma+4})ds \bigg] \to 0.
\end{align*}
Moreover, we similarly prove that the second term of $I_{2_R}^n$ in \eqref{I2Rn}  converges to 0. Hence, $I_{2_R}^n\Rightarrow0$ as $n\to \infty$.
Combining the estimates above leads to
\begin{align*}
    I_2^n(t) = \int_0^t [a_1'(P_s,Q_s)U_P^n(s)+a_2'(P_s,Q_s)U_Q^n(s)]dW_s + M_{2}^n(t),
\end{align*}
where $M_2^n:=-I_{21}^n-I_{22}^n+I_{2_R}^n$ satisfies
\begin{align*}
    M_{2}^n\Rightarrow -\frac{1}{\sqrt{2}}\int_0^{\cdot}(a_1'a+a_2'b)(P_s,Q_s)dB_s.
\end{align*}

\textit{(\romannumeral3) Convergence of $\sqrt{n}R_P^n$.}
Given that $\sqrt{n}R_{P,2}^n$ and $\sqrt{n}R_{P,3}^n$ converge to 0 by \Cref{propconverge0}, and from the decomposition $\sqrt{n}R_P^n = \sqrt{n}R_{P,1}^n + \sqrt{n}R_{P,2}^n + \sqrt{n}R_{P,3}^n$, the convergence of $\sqrt{n}R_P^n$ reduces to that of $\sqrt{n}R_{P,1}^n$. By a similar argument, it is clear that 
$$\sqrt{n}R_{P,1}^n=2\sqrt{n}\int_0^{\cdot} (a_1'a)(P_{\eta_n(s)}^n,Q_{\eta_n(s)}^n)(W_s-W_{\eta_n(s)})dW_s \Rightarrow \sqrt{2} \int_0^{\cdot} (a_1'a)(P_s,Q_s)dB_s.$$

Combining \textit{(\romannumeral 1)-(\romannumeral 3)}, 
$U_P^n$ satisfies
\begin{align*}
    U_P^n(t) =& \int_0^t [(u+a_1'a)_1'(P_s,Q_s)U_P^n(s)+(u+a_1'a)_2'(P_s,Q_s)U_Q^n(s)]ds \\
    &+\int_0^t [a_1'(P_s,Q_s)U_P^n(s)+a_2'(P_s,Q_s)U_Q^n(s)]dW_s + T_1^n(t),
\end{align*}
where $T_1^n:=M_1^n+M_2^n+\sqrt{n}R_{P,1}^n \Rightarrow \frac{1}{\sqrt{2}} \int_0^{\cdot} (a_1'a-a_2'b)(P_s,Q_s)dB_s$.
A similar analysis applied to $U_Q^n$ shows that
\begin{align*}
    U_Q^n(t) =& \int_0^t [(v+b_1'a)_1'(P_s,Q_s)U_P^n(s)+(v+b_1'a)_2'(P_s,Q_s)U_Q^n(s)]ds \\
    &+\int_0^t [b_1'(P_s,Q_s)U_P^n(s)+b_2'(P_s,Q_s)U_Q^n(s)]dW_s + T_2^n(t),
\end{align*}
where $T_2^n \Rightarrow \frac{1}{\sqrt{2}} \int_0^{\cdot} (b_1'a-b_2'b)(P_s,Q_s)dB_s$. 
Hence, by \Cref{rmk1}, it follows that $(U_P^n,U_Q^n,W) \Rightarrow (U_P,U_Q,W)$, where $(U_P,U_Q)$ satisfies \eqref{symEulerU}. 
Furthermore, 
\Cref{lemmastable}(\romannumeral2) yields $(U_P^n,U_Q^n)\Rightarrow^{stably} U:=(U_P,U_Q)$.
\end{proof}

It should be noted that the technique we used to calculate the asymptotic error distribution is also applicable for multi-dimensional systems. 
To be specific, we consider the $2d$-dimensional SHS
\begin{align} \label{Generalequ2d}
    d \begin{pmatrix} P_t \\ Q_t
\end{pmatrix}=\begin{pmatrix}
    f(P_t,Q_t) \\
    g(P_t,Q_t)
\end{pmatrix}dt +\begin{pmatrix}
    a(P_t,Q_t) \\ b(P_t,Q_t)
\end{pmatrix} \circ dW_t, \quad t\in (0,T],
\end{align}
with the initial value $(P_0,Q_0)\in \mathbb{R}^{2d}$, where $f:=-\frac{\partial H}{\partial Q}$, $g:=\frac{\partial H}{\partial P}$, $a:=-\frac{\partial \bar{H}}{\partial Q}$, $b:=\frac{\partial \bar{H}}{\partial P}$ and $H,\bar{H}:\mathbb{R}^{2d}\to \mathbb{R}$ are Hamiltonian functions. We apply a class of symplectic methods (see \cite[Eq.~(5.2.7)]{Milstein2021}) 
to \eqref{Generalequ2d} and give its continuous version as follows:
\begin{align} \label{generalsym}
        P_t^n =& \,P_{t_k}^n + \Big[f+(\frac{1}{2}-\theta)(\frac{\partial a}{\partial P}a-\frac{\partial a}{\partial Q}b)\Big](\theta P_t^n+(1-\theta)P_{t_k}^n, (1-\theta)Q_t^n+\theta Q_{t_k}^n)(t-t_k) \nonumber \\
        &+ a(\theta P_t^n+(1-\theta)P_{t_k}^n, (1-\theta)Q_t^n+\theta Q_{t_k}^n)\Delta\hat{W}_{k,t} \quad t\in (t_k,t_{k+1}], \nonumber \\
        Q_t^n =&\, Q_{t_k}^n +\Big[g+(\frac{1}{2}-\theta)(\frac{\partial b}{\partial P}a-\frac{\partial b}{\partial Q}b)\Big](\theta P_t^n+(1-\theta)P_{t_k}^n, (1-\theta)Q_t^n+\theta Q_{t_k}^n)(t-t_k) \nonumber \\
        &+b(\theta P_t^n+(1-\theta)P_{t_k}^n, (1-\theta)Q_t^n+\theta Q_{t_k}^n)\Delta\hat{W}_{k,t} \quad t\in (t_k,t_{k+1}],
\end{align} 
for $k=0,\dots,nT-1$ with $\theta \in [0,1]$. The symplectic Euler method is included in \eqref{generalsym} as a special case with $\theta = 1$.
By extending \Cref{assumption1} to the $2d$-dimensional case, we next establish the asymptotic error distributions for this class of symplectic methods.
\begin{assumption} \label{assumption2}
    (\romannumeral1) $f,g,a,b,\frac{\partial a}{\partial P}a,\frac{\partial a}{\partial Q}b,\frac{\partial b}{\partial P}a,\frac{\partial b}{\partial Q}b:\mathbb{R}^{2d}\to \mathbb{R}^d$ are globally Lipschitz \mbox{continuous}.\\
    (\romannumeral2) Let $f,g\in C^3(\mathbb{R}^{2d};\mathbb{R}^d)$, $a,b\in C^4(\mathbb{R}^{2d};\mathbb{R}^d)$ and assume that the $k$th order derivatives $D^k f,D^k g$ for $k= 2,3$ and $D^k a,D^k b$ for $k= 2,3,4$ are functions growing at most \mbox{polynomially}.
\end{assumption}

\begin{thm} \label{mainthmmulti}
Let $U_P^n(t):=\sqrt{n}(P_t^n-P_t)$ and $U_Q^n(t):=\sqrt{n}(Q_t^n-Q_t)$, where  $(P,Q)$ is the solution to \eqref{Generalequ2d} and  $(P^n,Q^n)$ is defined by \eqref{generalsym}. Then under Assumption \ref{assumption2},  we have $(U_P^n,U_Q^n)\Rightarrow^{stably} U:=(U_P,U_Q)$ in $\mathcal{C}([0,T];\mathbb{R}^{2d})$ and $U$ satisfies 
\begin{align} \label{generalU}
        dU_P(t) 
        =& \,\Big[\frac{\partial}{\partial P}\Big(f+\frac{1}{2}\frac{\partial a}{\partial P}a+\frac{1}{2}\frac{\partial a}{\partial Q}b\Big)(P_t,Q_t)U_P(t) +\frac{\partial}{\partial Q}\Big(f+\frac{1}{2}\frac{\partial a}{\partial P}a+\frac{1}{2}\frac{\partial a}{\partial Q}b\Big)(P_t,Q_t)U_Q(t)\Big]dt \nonumber \\
        &+ \Big[\frac{\partial a}{\partial P}(P_t,Q_t)U_P(t)+\frac{\partial a}{\partial Q}(P_t,Q_t)U_Q(t)\Big]dW_t+\frac{2\theta-1}{\sqrt{2}} \Big(\frac{\partial a}{\partial P}a-\frac{\partial a}{\partial Q}b\Big)(P_t,Q_t)dB_t, \nonumber \\
        dU_Q(t)
        =& \,\Big[\frac{\partial}{\partial P}\Big(g+\frac{1}{2}\frac{\partial b}{\partial P}a+\frac{1}{2}\frac{\partial b}{\partial Q}b\Big)(P_t,Q_t)U_P(t) +\frac{\partial }{\partial Q}\Big(g+\frac{1}{2}\frac{\partial b}{\partial P}a+\frac{1}{2}\frac{\partial b}{\partial Q}b\Big)(P_t,Q_t)U_Q(t)\Big]dt \nonumber \\
        &+ \Big[\frac{\partial b}{\partial P}(P_t,Q_t)U_P(t)+\frac{\partial b}{\partial Q}(P_t,Q_t)U_Q(t)\Big]dW_t+\frac{2\theta-1}{\sqrt{2}} \Big(\frac{\partial b}{\partial P}a-\frac{\partial b}{\partial Q}b\Big)(P_t,Q_t)dB_t,
\end{align}
with initial value $(U_P(0),U_Q(0))=(0,0)$, where $B$ is a Brownian motion independent of $W$.
\end{thm}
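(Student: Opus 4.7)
The argument follows the blueprint of \Cref{mainthm}, extended to $2d$ dimensions and augmented by a careful bookkeeping of the parameter $\theta$. First, I would reuse the localizing set $\Omega^n$ from \Cref{3.1}; since $W$ is still one-dimensional, \Cref{propOmeganc} applies verbatim. On $\Omega^n$ for $n$ large, the implicit system \eqref{generalsym} is solvable via the implicit function theorem in $\mathbb{R}^{2d}$: the Jacobian with respect to $(P_t^n,Q_t^n)$ is $I_{2d}$ plus corrections of size $O(t-t_k) + O(\Delta\hat{W}_{k,t})$, hence invertible once $n$ is large enough on $\Omega^n$. A Taylor expansion of the local inverse up to cubic order then produces an explicit formula for $(P_t^n,Q_t^n)$ on $\Omega^n$ analogous to \Cref{mainprop}, with polynomially bounded remainders.

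The key new input is how $\theta$ enters the leading-order nonlinear term. Since $P_t^n - P_{t_k}^n \approx a\,\Delta\hat{W}_{k,t}$ and $Q_t^n - Q_{t_k}^n \approx b\,\Delta\hat{W}_{k,t}$ at leading order, expanding $a(\theta P_t^n+(1-\theta)P_{t_k}^n,(1-\theta)Q_t^n+\theta Q_{t_k}^n)$ around $(P_{t_k}^n,Q_{t_k}^n)$ yields a quadratic contribution in $\Delta\hat{W}_{k,t}$ with coefficient $\theta\frac{\partial a}{\partial P}a + (1-\theta)\frac{\partial a}{\partial Q}b$ (and similarly with $a \to b$ for the $Q$-equation). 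Summing the one-step expansions and applying $(\Delta W)^2 = \Delta t + 2\int(W_s - W_{t_k})\,dW_s$, the $\Delta t$ part of this quadratic term combines with the explicit drift correction $(\frac{1}{2}-\theta)(\frac{\partial a}{\partial P}a - \frac{\partial a}{\partial Q}b)$ to produce precisely the It\^o drift $f + \frac{1}{2}\frac{\partial a}{\partial P}a + \frac{1}{2}\frac{\partial a}{\partial Q}b$ of the exact equation. This yields integral representations for $(P_t^n,Q_t^n)$ analogous to \eqref{pn}--\eqref{qn}, with the $R_{P,1}^n$-analogue equal to $2\int_0^t [\theta\frac{\partial a}{\partial P}a+(1-\theta)\frac{\partial a}{\partial Q}b](P^n_{\eta_n(s)},Q^n_{\eta_n(s)})(W_s-W_{\eta_n(s)})\,dW_s$.

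The final step reproduces parts (i)--(iii) of the proof of \Cref{mainthm}. A vectorial analogue of \Cref{propconverge0} disposes of every remainder other than the $R_{P,1}^n$- and $R_{Q,1}^n$-type terms. Linearizing the drift and diffusion pieces around $(P,Q)$ produces the $dt$- and $dW_t$-terms in \eqref{generalU}. The $dB_t$-coefficient is then assembled from three sources: by \cite[Theorem 2.1]{Rootzen1980} and \cite[Lemma 3.2]{KurtzProtter1991b}, the boundary martingale corrections analogous to $I_{21}^n$ and $I_{22}^n$ (now carrying prefactors $\theta$ and $1-\theta$ inherited from the midpoint arguments of $a$) converge to $\frac{\theta}{\sqrt{2}}\int\frac{\partial a}{\partial P}a\,dB$ and $\frac{1-\theta}{\sqrt{2}}\int\frac{\partial a}{\partial Q}b\,dB$, respectively, while $\sqrt{n}\,R_{P,1}^n \Rightarrow \sqrt{2}\int[\theta\frac{\partial a}{\partial P}a+(1-\theta)\frac{\partial a}{\partial Q}b]\,dB$. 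Combining with signs yields exactly $\frac{2\theta-1}{\sqrt{2}}(\frac{\partial a}{\partial P}a - \frac{\partial a}{\partial Q}b)$, matching \eqref{generalU}; the $U_Q$-component is treated identically with $a \to b$. The conclusion then follows from \Cref{propmain}, \Cref{rmk1}, and \Cref{lemmastable}(ii).

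The main obstacle is the careful bookkeeping of the $\theta$-weighted contributions across the two distinct mechanisms producing $dB_t$-terms, namely the boundary corrections $I_{21}^n, I_{22}^n$ and the one-step remainders $R_{P,1}^n, R_{Q,1}^n$. Their combination produces the factor $(2\theta-1)$, which vanishes precisely at the midpoint rule $\theta=1/2$, so this cancellation must be verified term by term in the expansion rather than obtained by a formal substitution.
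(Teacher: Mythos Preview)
Your overall strategy is exactly what the paper intends: it gives no separate proof of \Cref{mainthmmulti}, merely noting that ``the technique we used to calculate the asymptotic error distribution is also applicable for multi-dimensional systems,'' so the argument is indeed a rerun of \Cref{mainthm} with $\theta$-bookkeeping. Your localization via $\Omega^n$, the implicit-function expansion, the identification of the $(\Delta\hat W)^2$-coefficient as $\theta\frac{\partial a}{\partial P}a+(1-\theta)\frac{\partial a}{\partial Q}b$, and the resulting $R_{P,1}^n$ are all correct.

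However, your accounting of the $dB_t$-contributions is wrong in one place. Once you have passed to the integral form analogous to \eqref{pn}, the diffusion integral is $\int_0^t a(P^n_{\eta_n(s)},Q^n_{\eta_n(s)})\,dW_s$ with $a$ evaluated at the \emph{left endpoint}, not at any $\theta$-weighted point. Consequently, the analogues of $I_{21}^n$ and $I_{22}^n$ are \emph{unchanged} from the $\theta=1$ case: they are $\sqrt{n}\int \frac{\partial a}{\partial P}(P^n_s-P^n_{\eta_n(s)})\,dW_s$ and $\sqrt{n}\int \frac{\partial a}{\partial Q}(Q^n_s-Q^n_{\eta_n(s)})\,dW_s$, with leading increments $a\,\Delta\hat W$ and $b\,\Delta\hat W$ respectively, and hence converge to $\frac{1}{\sqrt 2}\int\frac{\partial a}{\partial P}a\,dB$ and $\frac{1}{\sqrt 2}\int\frac{\partial a}{\partial Q}b\,dB$ \emph{without} $\theta$-prefactors. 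The parameter $\theta$ enters the $dB_t$-budget \emph{only} through $R_{P,1}^n$. The correct combination is therefore
\[
\sqrt{2}\Big[\theta\tfrac{\partial a}{\partial P}a+(1-\theta)\tfrac{\partial a}{\partial Q}b\Big]-\tfrac{1}{\sqrt 2}\Big[\tfrac{\partial a}{\partial P}a+\tfrac{\partial a}{\partial Q}b\Big]=\tfrac{2\theta-1}{\sqrt 2}\Big(\tfrac{\partial a}{\partial P}a-\tfrac{\partial a}{\partial Q}b\Big),
\]
which is the coefficient in \eqref{generalU}. With your stated limits for $I_{21}^n,I_{22}^n$, the three pieces instead sum to $\frac{1}{\sqrt 2}\big[\theta\frac{\partial a}{\partial P}a+(1-\theta)\frac{\partial a}{\partial Q}b\big]$, which neither matches \eqref{generalU} nor vanishes at $\theta=\tfrac12$---contradicting the very cancellation you flag as the main obstacle. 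Fixing this single misattribution repairs the proof.
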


\begin{remark}
When $\theta = \frac{1}{2}$, the term involving $B$ in \eqref{generalU} vanishes, and thus $U = 0$ becomes a solution to \eqref{generalU}. This is consistent with the fact that \eqref{generalsym} with $\theta = \frac{1}{2}$ is the midpoint method, which has a strong convergence order of 1. In this case, the normalized error process should be defined using a normalization constant $n$ rather than $\sqrt{n}$.
\end{remark}

\subsection{Hamiltonian-specific results} \label{3.3}
Building on the derived asymptotic error distribution, the following theorem shows that the equation of $U$ retains a stochastic Hamiltonian formulation.
\begin{thm} \label{UisHamiltonian}
The asymtotic error distribution $U:=(U_P,U_Q)$  given in \eqref{generalU} still has a Hamiltonian formulation, satisfying
\begin{equation*} 
    \begin{split}
        d \begin{pmatrix}
            U_P \\
            U_Q
        \end{pmatrix} =& \begin{pmatrix}
            -\frac{\partial H_0}{\partial U_Q} \\
            \frac{\partial H_0}{\partial U_P}
        \end{pmatrix} dt+\begin{pmatrix}
            -\frac{\partial H_1}{\partial U_Q} \\
            \frac{\partial H_0}{\partial U_P}
        \end{pmatrix} \circ dW_t + \begin{pmatrix}
            -\frac{\partial H_2}{\partial U_Q} \\
            \frac{\partial H_0}{\partial U_P}
        \end{pmatrix} \circ dB_t,
    \end{split}
\end{equation*}
where 
\begin{align}
    &H_0 = \frac{1}{2}U_P^{\top}\frac{\partial g}{\partial P}(P,Q)U_P-U_P^{\top}\frac{\partial f}{\partial P}(P,Q)U_Q-\frac{1}{2}U_Q^{\top}\frac{\partial f}{\partial Q}(P,Q)U_Q, \label{H0} \\
    &H_1 = \frac{1}{2}U_P^{\top}\frac{\partial b}{\partial P}(P,Q)U_P-U_P^{\top}\frac{\partial a}{\partial P}(P,Q)U_Q-\frac{1}{2}U_Q^{\top}\frac{\partial a}{\partial Q}(P,Q)U_Q, \label{H1} \\
    &H_2 = \frac{2\theta-1}{\sqrt{2}} U_P^{\top}\left(\frac{\partial b}{\partial P}a-\frac{\partial b}{\partial Q}b\right)(P,Q)-\frac{2\theta-1}{\sqrt{2}} U_Q^{\top}\left(\frac{\partial a}{\partial P}a-\frac{\partial a}{\partial Q}b\right)(P,Q). \label{H2}
\end{align} 
\end{thm}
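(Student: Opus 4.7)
My plan is to convert the It\^o SDE \eqref{generalU} into its Stratonovich counterpart and then verify that the resulting coefficients coincide with the symplectic gradients of the proposed $H_0, H_1, H_2$. A guiding observation is that \eqref{Generalequ2d}, rewritten in Stratonovich form as $dP = f\,dt + a\circ dW$, $dQ = g\,dt + b\circ dW$, is Hamiltonian, so its variational equation is automatically Hamiltonian with quadratic Hamiltonians given by the Hessians of $H$ and $\bar{H}$. I expect $U$ to agree with this variational SDE up to an extra $B$-driven perturbation accounting for $H_2$. Note that the $B$-coefficients in \eqref{generalU} depend only on $(P,Q)$, so since $B$ is independent of $W$, no It\^o--Stratonovich correction arises from the $B$-integral.

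The key calculation is the $W$-Stratonovich correction. On the enlarged state $(P, Q, U_P, U_Q)$, the correction to be subtracted from the It\^o drift of $U_P$ is
\[
\tfrac{1}{2}\bigl[\tfrac{\partial\sigma^P_W}{\partial P}\,a + \tfrac{\partial\sigma^P_W}{\partial Q}\,b + \tfrac{\partial\sigma^P_W}{\partial U_P}\,\sigma^P_W + \tfrac{\partial\sigma^P_W}{\partial U_Q}\,\sigma^Q_W\bigr],
\]
with $\sigma^P_W = \tfrac{\partial a}{\partial P}U_P + \tfrac{\partial a}{\partial Q}U_Q$ and $\sigma^Q_W = \tfrac{\partial b}{\partial P}U_P + \tfrac{\partial b}{\partial Q}U_Q$. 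Expanding via the product rule and using equality of mixed partials, this quantity should collapse to $\tfrac{1}{2}\tfrac{\partial}{\partial P}(\tfrac{\partial a}{\partial P}a + \tfrac{\partial a}{\partial Q}b)U_P + \tfrac{1}{2}\tfrac{\partial}{\partial Q}(\tfrac{\partial a}{\partial P}a + \tfrac{\partial a}{\partial Q}b)U_Q$, which is exactly the It\^o correction already present in the drift of $U_P$ in \eqref{generalU}. Subtracting yields the clean Stratonovich drift $\tfrac{\partial f}{\partial P}U_P + \tfrac{\partial f}{\partial Q}U_Q$ for $U_P$, and analogously $\tfrac{\partial g}{\partial P}U_P + \tfrac{\partial g}{\partial Q}U_Q$ for $U_Q$. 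I then check the Hamiltonian identifications: using $f = -\partial H/\partial Q$, $g = \partial H/\partial P$ and symmetry of the Hessian of $H$, direct differentiation gives $-\partial H_0/\partial U_Q$ and $\partial H_0/\partial U_P$ equal to the two Stratonovich drifts; the analogous calculation with $\bar{H}$ matches $-\partial H_1/\partial U_Q$ and $\partial H_1/\partial U_P$ to the Stratonovich $W$-coefficients; and $H_2$, being affine in $(U_P, U_Q)$, matches the $B$-coefficients by inspection.

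The main technical obstacle is the expansion of the $W$-Stratonovich correction above: since $\sigma^P_W$ depends on all four state variables, the ``diagonal'' part from $\partial_{U_\bullet}\sigma^P_W$ and the ``cross'' part from $\partial_{P,Q}\sigma^P_W$ must be combined and matched to the Jacobian-times-correction in \eqref{generalU}, which requires systematic bookkeeping of the symmetric mixed partials of $a$ and $b$. A more conceptual alternative is to observe that \eqref{generalU}, modulo the $B$-term, is precisely the variational equation of the Stratonovich SHS; since the variational flow of a Hamiltonian system is itself Hamiltonian with the second-order Taylor Hamiltonian, $H_0$ and $H_1$ are forced by construction, and $H_2$ is then read off from the remaining $B$-coefficients.
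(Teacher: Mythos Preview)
Your proposal is correct and follows essentially the same route as the paper: convert \eqref{generalU} from It\^o to Stratonovich form on the enlarged state $(P,Q,U_P,U_Q)$, obtain the clean coefficients $\tfrac{\partial f}{\partial P}U_P+\tfrac{\partial f}{\partial Q}U_Q$, etc., and then verify the Hamiltonian identities using $\tfrac{\partial f}{\partial P}=-\tfrac{\partial g}{\partial Q}$ and $\tfrac{\partial a}{\partial P}=-\tfrac{\partial b}{\partial Q}$ inherited from the original SHS. The paper's proof is terser---it simply records the Stratonovich form \eqref{weifenStratonovichU} and invokes the Hamiltonian relations---while you spell out the correction computation and add the variational-equation interpretation, but the substance is the same.
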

\begin{proof}
By \eqref{Generalequ2d} and the It\^o--Stratonovich conversion formula, we rewrite \eqref{generalU} as
{\small
\begin{align} \label{weifenStratonovichU}
    d \begin{pmatrix}
        U_P(t) \\
        U_Q(t)
    \end{pmatrix} 
    =& \begin{pmatrix}
        \frac{\partial f}{\partial P}(P_t,Q_t)U_P(t)+\frac{\partial f}{\partial Q}(P_t,Q_t)U_Q(t) \\
        \frac{\partial g}{\partial P}(P_t,Q_t)U_P(t)+\frac{\partial g}{\partial Q}(P_t,Q_t)U_Q(t)
    \end{pmatrix} dt +\begin{pmatrix}
        \frac{\partial a}{\partial P}(P_t,Q_t)U_P(t)+\frac{\partial a}{\partial Q}(P_t,Q_t)U_Q(t) \\
        \frac{\partial b}{\partial P}(P_t,Q_t)U_P(t)+\frac{\partial b}{\partial Q}(P_t,Q_t)U_Q(t)
    \end{pmatrix} \circ dW_t \nonumber \\
    &+ \begin{pmatrix}
        \frac{2\theta-1}{\sqrt{2}} \left(\frac{\partial a}{\partial P}a-\frac{\partial a}{\partial Q}b\right)(P_t,Q_t) \\
        \frac{2\theta-1}{\sqrt{2}} \left(\frac{\partial b}{\partial P}a-\frac{\partial b}{\partial Q}b\right)(P_t,Q_t)
        \end{pmatrix} \circ dB_t.
\end{align}
}
\\
Since \eqref{Generalequ2d} is an SHS and the relations $\frac{\partial f}{\partial P}=-\frac{\partial g}{\partial Q}$ and $\frac{\partial a}{\partial P}=-\frac{\partial b}{\partial Q}$ hold, we conclude that \eqref{weifenStratonovichU} is also an SHS, with Hamiltonians given by \eqref{H0}-\eqref{H2}.
\end{proof}

Next, we show the limiting distribution of the normalized Hamiltonian deviation.
\begin{thm} \label{thmdeltaH}
    If $H$ is one of the Hamiltonians in \eqref{Generalequ2d} and $U=(U_P,U_Q)$ is the asymptotic error distribution given in \eqref{generalU}, then 
    \begin{align} \label{multideltaH}
        \sqrt{n}(H(P^n,Q^n)-H(P,Q))\Rightarrow^{stably}  \Big(\frac{\partial H}{\partial P}(P,Q)\Big)^{\top}U_P+\Big(\frac{\partial H}{\partial Q}(P,Q)\Big)^{\top}U_Q
    \end{align}
    in $\mathcal C ([0,T];{\mathbb{R}})$.
\end{thm}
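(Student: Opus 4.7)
The plan is to use a first-order Taylor expansion of $H$ around $(P_t,Q_t)$ and then pass to the limit in the leading linear term via the already-established stable convergence of $(U_P^n,U_Q^n)$ from \Cref{mainthmmulti}. Writing $X^n:=(P^n,Q^n)^{\top}$ and $X:=(P,Q)^{\top}$, Taylor's formula gives an intermediate process $\Theta^n(t)$ on the segment joining $X_t$ and $X_t^n$ such that
\[
\sqrt{n}\bigl(H(P_t^n,Q_t^n)-H(P_t,Q_t)\bigr)=\frac{\partial H}{\partial P}(P_t,Q_t)^{\top}U_P^n(t)+\frac{\partial H}{\partial Q}(P_t,Q_t)^{\top}U_Q^n(t)+\sqrt{n}R^n(t),
\]
where $R^n(t)=\tfrac{1}{2}(X_t^n-X_t)^{\top}D^2H(\Theta^n(t))(X_t^n-X_t)$.

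I would first show that $\sqrt{n}R^n\to^{\mathbb{P}}0$ in $\mathcal{C}([0,T];\mathbb{R})$. The relations $f=-\partial H/\partial Q$ and $g=\partial H/\partial P$ together with the global Lipschitz property of $f,g$ in \Cref{assumption2}(i) imply that every component of $D^2H$ is bounded on $\mathbb{R}^{2d}$. Hence
\[
\sup_{t\in[0,T]}|\sqrt{n}R^n(t)|\le \frac{C}{\sqrt{n}}\sup_{t\in[0,T]}\bigl(\|U_P^n(t)\|^2+\|U_Q^n(t)\|^2\bigr),
\]
and since the stable convergence $(U_P^n,U_Q^n)\Rightarrow^{stably}(U_P,U_Q)$ in $\mathcal{C}([0,T];\mathbb{R}^{2d})$ entails tightness, $\sup_{t\in[0,T]}(\|U_P^n(t)\|^2+\|U_Q^n(t)\|^2)$ is bounded in probability via the continuity of the uniform norm, which yields the claim.

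For the leading linear term, I view $Y:=\bigl(\partial H/\partial P(P,Q),\partial H/\partial Q(P,Q)\bigr)$ as a fixed $\mathcal{C}([0,T];\mathbb{R}^{2d})$-valued random variable on $(\Omega,\mathcal{F},\mathbb{P})$, so the constant sequence $Y_n\equiv Y$ trivially satisfies $Y_n\to^{\mathbb{P}}Y$. Applying \Cref{lemmastable}(i) with $X_n=(U_P^n,U_Q^n)$ and $Y_n=Y$ produces $(U_P^n,U_Q^n,Y)\Rightarrow^{stably}(U_P,U_Q,Y)$ in $\mathcal{C}([0,T];\mathbb{R}^{4d})$. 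The map $(u_P,u_Q,\varphi,\psi)\mapsto \varphi^{\top}u_P+\psi^{\top}u_Q$ is continuous from $\mathcal{C}([0,T];\mathbb{R}^{4d})$ to $\mathcal{C}([0,T];\mathbb{R})$ under the uniform topology, so the continuous mapping theorem (which preserves stable convergence) combined with the remainder estimate delivers \eqref{multideltaH}. The only nontrivial observation is the boundedness of $D^2H$ coming from \Cref{assumption2}(i); once this is in hand, the argument reduces to a routine Taylor expansion together with the stable-convergence tools developed in \Cref{preliminaries}.
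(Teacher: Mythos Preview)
Your proof is correct and follows essentially the same route as the paper: a first-order Taylor expansion, stable convergence of the linear part via \Cref{lemmastable}(i) paired with the fixed coefficient process $(\partial H/\partial P,\partial H/\partial Q)(P,Q)$, and the key observation that \Cref{assumption2}(i) forces $D^2H$ to be bounded. The only cosmetic difference is in dispatching the remainder: the paper writes it as $\sqrt{n}(X^n-X)^{\top}\bigl[D^2H(\Theta)(X^n-X)\bigr]$ and invokes \Cref{lemmastable}(i) with the bracketed factor $\to^{\mathbb P}0$, whereas you factor out $1/\sqrt{n}$ and use tightness of $(U_P^n,U_Q^n)$; both arguments rest on the same boundedness of $D^2H$ and are equally valid.
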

\begin{proof}
    By the Taylor formula,  we obtain
    \begin{align*}
        \sqrt{n}(H(P^n,Q^n)-H(P,Q))=& \ \sqrt{n}\Big(\frac{\partial H}{\partial P}(P,Q)\Big)^{\top} (P^n-P)+\sqrt{n}\Big(\frac{\partial H}{\partial Q}(P,Q)\Big)^{\top}(Q^n-Q) \\
        &+ \sqrt{n}\begin{pmatrix}
        P^n-P \\
        Q^n-Q
    \end{pmatrix}^{\top}D^2H(\Theta(\lambda))\begin{pmatrix}
        P^n-P \\
        Q^n-Q
    \end{pmatrix},
    \end{align*}
    where $\Theta(\lambda)=(P+\lambda(P^n-P),Q+\lambda(Q^n-Q))$ with $\lambda \in [0,1]$. Since $(\sqrt{n}(P^n-P),\sqrt{n}(Q^n-Q))\Rightarrow^{stably}(U_P,U_Q)$, by \Cref{lemmastable}(i), we deduce that 
    \begin{align*}
        &\quad \sqrt{n}\Big(\frac{\partial H}{\partial P}(P,Q)\Big)^{\top}(P^n-P)+\sqrt{n}\Big(\frac{\partial H}{\partial Q}(P,Q)\Big)^{\top}(Q^n-Q) \\ &\Rightarrow^{stably}\Big(\frac{\partial H}{\partial P}(P,Q)\Big)^{\top}U_P+\Big(\frac{\partial H}{\partial Q}(P,Q)\Big)^{\top}U_Q
    \end{align*}
in $\mathcal C ([0,T];{\mathbb{R}})$. Furthermore, the boundedness of $D^2H$ and the fact that $\big(\begin{smallmatrix} P^n-P \\ Q^n-Q \end{smallmatrix} \big) \to^{\mathbb{P}} 0$ lead to $D^2H(\Theta(\lambda))\big(\begin{smallmatrix} P^n-P \\ Q^n-Q \end{smallmatrix} \big) \to^{\mathbb{P}}0$. Therefore, we have $\sqrt{n}\big(\begin{smallmatrix} P^n-P \\ Q^n-Q \end{smallmatrix} \big) ^{\top}D^2 H (\Theta(\lambda))\big(\begin{smallmatrix} P^n-P \\ Q^n-Q \end{smallmatrix} \big) \Rightarrow^{stably}0$ by \Cref{lemmastable}(i). Consequently, \eqref{multideltaH} holds.
\end{proof}

\section{Asymptotic error distributions of symplectic methods for SHS with additive noise} \label{Sectionfour}
In this section, we consider the additive noise case and give the corresponding asymptotic error distributions of symplectic methods.
For a $2d$-dimensional SHS with additive noise
\begin{align} \label{Generaladdequ2d}
    d \begin{pmatrix} P_t \\ Q_t
\end{pmatrix}=\begin{pmatrix}
    f(P_t,Q_t) \\
    g(P_t,Q_t)
\end{pmatrix}dt +\begin{pmatrix}
    a \\ b
\end{pmatrix} dW_t, \quad t\in (0,T],
\end{align}
with initial value $(P_0,Q_0)\in \mathbb{R}^{2d}$, where $f:=-\frac{\partial H}{\partial Q}$, $g:=\frac{\partial H}{\partial P}$, and $a,b$ are constant vectors, the continuous version of the class of symplectic methods \eqref{generalsym} is
\begin{equation} \label{generalsymadd}
    \begin{split}
        P_t^n =& P_{t_k}^n + f(\theta P_t^n+(1-\theta)P_{t_k}^n, (1-\theta)Q_t^n+\theta Q_{t_k}^n)(t-t_k)  + a(W_t-W_{t_k}), \\
        Q_t^n =& Q_{t_k}^n +g(\theta P_t^n+(1-\theta)P_{t_k}^n, (1-\theta)Q_t^n+\theta Q_{t_k}^n)(t-t_k) +b(W_t-W_{t_k}),
    \end{split} \quad t\in(t_k,t_{k+1}].
\end{equation} 
\begin{assumption} \label{assumption3}
    (\romannumeral 1) $f,g:\mathbb{R}^{2d}\to \mathbb{R}^d$ are globally Lipschitz continuous.\\
    (\romannumeral 2) Let $f,g \in C^4(\mathbb{R}^{2d};\mathbb{R}^d)$ and assume that the $k$th order derivatives $D^k f,D^k g$ for $k= 2,3,4$ are functions growing at most polynomially.
\end{assumption}
Under \Cref{assumption3}, we present the asymptotic error distribution of \eqref{generalsymadd}. Here, we denote $f=(f^1,\dots,f^d),g=(g^1,\dots,g^d)$ with $f^i,g^i:\mathbb{R}^{2d}\to\mathbb{R}$ for $i=1,\dots,d$. Since the strong convergence order of \eqref{generalsymadd} in the case of additive noise is 1, the normalized error process should be defined using a normalization constant $n$.
\begin{thm} \label{mainthmadd}
    Let $U_P^n:=n(P_t^n-P_t)$ and $U_Q^n(t):=n(Q_t^n-Q_t)$, where  $(P,Q)$ is the solution to \eqref{Generaladdequ2d} and  $(P^n,Q^n)$ is defined by \eqref{generalsymadd}. 
    Then under \Cref{assumption3}, 
  we have $(U_P^n,U_Q^n)\Rightarrow^{stably} U:=(U_P,U_Q)=(U_{P,1},\dots,U_{P,d},U_{Q,1}\dots,U_{Q,d})$ in $\mathcal C([0,T];{\mathbb{R}^{2d}})$ and $U$ satisfies 
\begin{align} \label{generalUadd}
        &\quad dU_{P,i}(t) \nonumber\\
        &= \Big[\big(\frac{\partial f^i}{\partial P}\big)^{\top}(P_t,Q_t)U_P(t)+\big(\frac{\partial f^i}{\partial Q}\big)^{\top}(P_t,Q_t)U_Q(t)\Big]dt +(\theta-\frac{1}{2})\Big[\big(\frac{\partial f^i}{\partial P}\big)^{\top}f-\big(\frac{\partial f^i}{\partial Q}\big)^{\top}g\Big](P_t,Q_t)dt  \nonumber\\
        &\quad +\Big[\big(\frac{1}{2}\theta^2-\frac{1}{4}\big)a^{\top}\big(\frac{\partial^2 f^i}{\partial P^2}\big)a+\big(\theta(1-\theta)-\frac{1}{2}\big)a^{\top}\big(\frac{\partial^2 f^i}{\partial P\partial Q}\big)b  + \big(\frac{1}{2}(1-\theta)^2-\frac{1}{4}\big)b^{\top}\big(\frac{\partial^2 f^i}{\partial Q^2}\big)b \Big](P_t,Q_t)dt \nonumber \\
        &\quad +(\theta-\frac{1}{2})\Big[\big(\frac{\partial f^i}{\partial P}\big)^{\top}a-\big(\frac{\partial f^i}{\partial Q}\big)^{\top}b\Big](P_t,Q_t)dW_t-\frac{\sqrt{3}}{6}\Big[\big(\frac{\partial f^i}{\partial P}\big)^{\top}a+\big(\frac{\partial f^i}{\partial Q}\big)^{\top}b\Big](P_t,Q_t)dB_t, \nonumber\\[2mm]
        &\quad dU_{Q,i}(t) \\
        &= \Big[\big(\frac{\partial g^i}{\partial P}\big)^{\top}(P_t,Q_t)U_P(t)+\big(\frac{\partial g^i}{\partial Q}\big)^{\top}(P_t,Q_t)U_Q(t)\Big]dt +(\theta-\frac{1}{2}) \Big[\big(\frac{\partial g^i}{\partial P}\big)^{\top}f-\big(\frac{\partial g^i}{\partial Q}\big)^{\top}g\Big](P_t,Q_t)dt \nonumber\\
        &\quad +\Big[\big(\frac{1}{2}\theta^2-\frac{1}{4}\big)a^{\top}\big(\frac{\partial^2 g^i}{\partial P^2}\big)a+\big(\theta(1-\theta)-\frac{1}{2}\big)a^{\top}\big(\frac{\partial^2 g^i}{\partial P\partial Q}\big)b + \big(\frac{1}{2}(1-\theta)^2-\frac{1}{4}\big)b^{\top}\big(\frac{\partial^2 g^i}{\partial Q^2}\big)b \Big](P_t,Q_t)dt  \nonumber\\
        &\quad +(\theta-\frac{1}{2})\Big[\big(\frac{\partial g^i}{\partial P}\big)^{\top}a-\big(\frac{\partial g^i}{\partial Q}\big)^{\top}b\Big](P_t,Q_t)dW_t-\frac{\sqrt{3}}{6}\Big[\big(\frac{\partial g^i}{\partial P}\big)^{\top}a+\big(\frac{\partial g^i}{\partial Q}\big)^{\top}b\Big](P_t,Q_t)dB_t \nonumber
\end{align}
for $i=1,\dots,d$ with initial value $(U_P(0),U_Q(0))=(0,0)$, where $B$ is a Brownian motion independent of $W$.
\end{thm}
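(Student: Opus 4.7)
The plan is to parallel the proof strategy of \Cref{mainthm} and its $2d$ extension \Cref{mainthmmulti}, but now carrying the Taylor expansion one further order, since the normalization is by $n$ rather than $\sqrt{n}$ (reflecting the strong order $1$ of the scheme in the additive-noise regime). First I would introduce a good event $\Omega^n$ analogous to the one in \Cref{3.1}, on which the implicit scheme \eqref{generalsymadd} is well-defined; here the construction is in fact simpler because the diffusion coefficients are constant, so the implicit function theorem applies on $\Omega^n$ provided only that $|t-t_k|$ is sufficiently small. By the polynomial decay $\mathbb{P}((\Omega^n)^c) \to 0$ (as in \Cref{propOmeganc}) and the moment estimates on $(P^n,Q^n)$, I can work globally on $\Omega$ after throwing away an asymptotically negligible event.

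Second, I would Taylor-expand the implicit relation for $(P_t^n,Q_t^n)$ to third order in the joint increment $((t-t_k),(W_t-W_{t_k}))$. This yields an explicit representation of the form
\begin{align*}
P_t^n = P_{t_k}^n + f(P_{t_k}^n,Q_{t_k}^n)(t-t_k) + a(W_t-W_{t_k}) + \Phi^P_n(P_{t_k}^n,Q_{t_k}^n,t-t_k,W_t-W_{t_k}) + R^P_k(t),
\end{align*}
and analogously for $Q_t^n$, where $\Phi^P_n$ collects the quadratic and cubic corrections (of the form $(t-t_k)^2$, $(t-t_k)(W_t-W_{t_k})$, $(W_t-W_{t_k})^2$, $(W_t-W_{t_k})^3$) with coefficients involving $\theta$, $\frac{\partial f}{\partial P}a$, $\frac{\partial f}{\partial Q}b$, $a^{\top}\frac{\partial^2 f^i}{\partial P^2}a$, $a^{\top}\frac{\partial^2 f^i}{\partial P\partial Q}b$, $b^{\top}\frac{\partial^2 f^i}{\partial Q^2}b$, and the remainders $R^P_k$ are controlled by \Cref{assumption3}. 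Converting the telescoping sum into an integral representation, together with the It\^o identity $(W_s-W_{\eta_n(s)})^2 = (s-\eta_n(s)) + 2\int_{\eta_n(s)}^s (W_u-W_{\eta_n(s)})dW_u$, gives
\begin{align*}
P_t^n = P_0 + \int_0^t \widetilde f_n(P_{\eta_n(s)}^n,Q_{\eta_n(s)}^n)\,ds + aW_t + R_P^n(t),
\end{align*}
where $\widetilde f_n$ is $f$ plus the drift corrections of order $1/n$ coming from the deterministic parts of $\Phi^P_n$, and $R_P^n$ collects all genuinely fluctuating remainders. By the analogue of \Cref{propconverge0} one shows $R_P^n,R_Q^n\to^{\mathbb{P}}0$ in $\mathcal{C}([0,T];\mathbb{R}^d)$, and by \Cref{propmain} together with the Lipschitz property of $f,g$ we obtain $(P^n,Q^n,W)\to^{\mathbb{P}}(P,Q,W)$.

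Third, I would form $U^n:=n(P^n-P,Q^n-Q)$ and subtract \eqref{Generaladdequ2d} from the integral representation. The linearization of $f(P_{\eta_n(s)}^n,Q_{\eta_n(s)}^n)-f(P_s,Q_s)$ around $(P_s,Q_s)$ yields the drift $\bigl(\frac{\partial f}{\partial P}\bigr)^{\!\top}U_P^n+\bigl(\frac{\partial f}{\partial Q}\bigr)^{\!\top}U_Q^n$ plus error terms, handled exactly as in step (i) of the proof of \Cref{mainthm}. The corrections carried by $\widetilde f_n-f$ contribute, after multiplication by $n$, the deterministic drift terms in \eqref{generalUadd} with coefficients $(\theta-\tfrac12)$ and the Hessian combinations $(\tfrac12\theta^2-\tfrac14)$, $(\theta(1-\theta)-\tfrac12)$, $(\tfrac12(1-\theta)^2-\tfrac14)$. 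The $dW$ terms with coefficient $(\theta-\tfrac12)$ arise from integrating-by-parts the $(t-t_k)(W_t-W_{t_k})$ pieces of $\Phi^P_n$ and using $n\int_0^t h(P_{\eta_n(s)}^n,Q_{\eta_n(s)}^n)(\eta_n(s)+1/n-s)\,dW_s \to \tfrac12\int_0^t h(P_s,Q_s)\,dW_s$ from bounded-variation arguments.

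Fourth, the crucial identification of the independent Brownian part comes from the auxiliary process
\begin{align*}
Y_n(t):=n\int_0^t (W_s-W_{\eta_n(s)})\,ds = n\int_0^t\bigl(\eta_n(s)+\tfrac1n-s\bigr)\,dW_s,
\end{align*}
whose quadratic variation converges to $t/3$ while its covariation with $W$ is $O(1/n)$; by \cite[Theorem 2.1]{Rootzen1980} one gets $Y_n\Rightarrow \tfrac{1}{\sqrt{3}}B$ with $B\perp W$. Goodness of $\{Y_n\}$ via \cite[Theorem 2.7]{Burdzy2010}, combined with continuity of the relevant coefficients and $(P^n,Q^n)\Rightarrow(P,Q)$, then yields the convergence of $\int_0^\cdot h(P_{\eta_n(s)}^n,Q_{\eta_n(s)}^n)\,dY_n(s)$ to $\tfrac{1}{\sqrt{3}}\int_0^\cdot h(P_s,Q_s)\,dB_s$, which after combining the coefficients of $a$ and $b$ produces the $-\tfrac{\sqrt{3}}{6}$ prefactor on $dB_t$ in \eqref{generalUadd}. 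Assembling all these pieces into an integral equation of the form \eqref{equX} with a linear Lipschitz $F$, applying \Cref{propmain} together with \Cref{rmk1} and finally \Cref{lemmastable}(ii), yields $(U_P^n,U_Q^n)\Rightarrow^{stably}(U_P,U_Q)$ satisfying \eqref{generalUadd}.

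The main obstacle is the combinatorial bookkeeping of the third-order expansion and its iteration through the implicit equation: one must track which cubic cross terms $(t-t_k)^\alpha(W_t-W_{t_k})^\beta$ survive after multiplication by $n$, and separate cleanly the contributions that go into the deterministic drift, the $dW_t$ integral with the $W$-adapted coefficient, and the $dB_t$ integral mediated by the auxiliary process $Y_n$. A secondary technical step is the verification that all residual terms (including those produced by the truncation $\zeta_k$ via \eqref{truncated1}--\eqref{truncated2}) vanish in probability in $\mathcal{C}([0,T];\mathbb{R}^d)$ after multiplication by $n$, for which the higher smoothness assumed in \Cref{assumption3}(ii) is exactly what is needed.
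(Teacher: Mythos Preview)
Your overall architecture matches the paper's, but the fourth step contains a genuine error that would derail the identification of the limit. For the auxiliary process
\[
Y_n(t)=n\int_0^t(W_s-W_{\eta_n(s)})\,ds=n\int_0^t\bigl(\eta_n(s)+\tfrac1n-s\bigr)\,dW_s,
\]
the covariation with $W$ is \emph{not} $O(1/n)$: on each subinterval one has $\int_0^{1/n}(1/n-u)\,du=\tfrac{1}{2n^2}$, so $\langle Y_n,W\rangle_t\to t/2$. Hence $Y_n\Rightarrow \tfrac12 W+\tfrac{\sqrt{3}}{6}B$, not $\tfrac{1}{\sqrt{3}}B$. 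This is exactly the content of the paper's \eqref{nWds}, and the companion process $n\int_0^\cdot(s-\eta_n(s))\,dW_s$ has the limit $\tfrac12 W-\tfrac{\sqrt{3}}{6}B$ (see \eqref{nsdW}); only the \emph{centered} integrand $s-\eta_n(s)-\tfrac{1}{2n}$ has vanishing covariation with $W$. Your third step is therefore inconsistent with your fourth: you use the very same integral $n\int_0^t h(\cdot)(\eta_n(s)+1/n-s)\,dW_s$ once claiming a pure $\tfrac12\int h\,dW$ limit and once claiming a pure $\tfrac{1}{\sqrt 3}\int h\,dB$ limit. In the paper, the $dW_t$ term with coefficient $(\theta-\tfrac12)$ and the $dB_t$ term with coefficient $-\tfrac{\sqrt3}{6}$ both emerge from the \emph{same} pool of $(s-\eta_n(s))\,dW_s$ and $(W_s-W_{\eta_n(s)})\,ds$ integrals, and the correct coefficients come from the cancellation between their $\pm\tfrac{\sqrt3}{6}B$ parts after combining $nR_{P,1}^n$ with $-S_2^n$. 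Without this, you will not recover the stated constants in \eqref{generalUadd}.

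Two smaller points. First, in the additive case the paper dispenses with $\Omega^n$ and the truncation $\zeta_k$ altogether: since $a,b$ are constant, the implicit relation in \eqref{generalsymadd} involves only the time increment, so by \Cref{mainpropadd} the scheme is well defined on all of $\Omega$ once $n$ exceeds a fixed constant depending on $\sup|Df|$, and the scheme uses the genuine increments $W_t-W_{t_k}$; your references to \eqref{truncated1}--\eqref{truncated2} are unnecessary here. Second, the Taylor expansion of the implicit map must be carried to order four in $(t-t_k,W_t-W_{t_k})$ (not three), as in \Cref{mainpropadd}, because after writing $(t-t_k)(W_t-W_{t_k})^2$ and $(t-t_k)^2(W_t-W_{t_k})$ in It\^o form and multiplying by $n$, third-order terms still contribute to the limit via \eqref{nW^2ds}; the fourth-order remainder is what is shown to vanish.
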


Without loss of generality, we present the proof of \Cref{mainthmadd} for the case $d=1$ and $\theta=1$.
As mentioned earlier, \eqref{generalsymadd} corresponds to the symplectic Euler method when $\theta=1$. 

Similar to the proof of \Cref{mainprop}, one can show that, for $n>2\sup|f_1'|$, the continuous numerical solution $(P^n,Q^n)$ associated with the symplectic Euler method 
is well-defined and admits an explicit expression on $\Omega$. Here, the proof is omitted.
\begin{prop} \label{mainpropadd}
    Let $\epsilon := \frac{1}{2\sup|f_1'|}$ ($\epsilon=\infty$ if $\sup|f_1'|=0$). If $n>2\sup|f_1'|$, then under Assumption \ref{assumption3}, the continuous numerical solution corresponding to the symplectic Euler method is well-defined on $\Omega$ and satisfies
    \begin{align}
    P_t^n =& \ P_{t_k}^n+f(P_{t_k}^n,Q_{t_k}^n)(t-t_k)+a(W_t-W_{t_k})+
    (f_1'f)(P_{t_k}^n,Q_{t_k}^n)(t-t_k)^2 \nonumber \\
    &+(af_1')(P_{t_k}^n,Q_{t_k}^n)(t-t_k)(W_t-W_{t_k})+\frac{1}{2}(a^2f_{11}'')(P_{t_k}^n,Q_{t_k}^n)(t-t_k)(W_t-W_{t_k})^2 \nonumber \\
    &+ (af_1'^2+af_{11}''f)(P_{t_k}^n,Q_{t_k}^n)(t-t_k)^2(W_t-W_{t_k})+(f_1'^2f+\frac{1}{2}f_{11}''f^2)(P_{t_k}^n,Q_{t_k}^n)(t-t_k)^3 \nonumber \\
    &+R_1(P_{t_k}^n,Q_{t_k}^n,t-t_k,W_t-W_{t_k}), \quad t\in (t_k,t_{k+1}], \label{explicitpadd} \\
    Q_t^n =& \ Q_{t_k}^n+g(P_{t_k}^n,Q_{t_k}^n)(t-t_k)+b(W_t-W_{t_k})+
    (g_1'f()P_{t_k}^n,Q_{t_k}^n)(t-t_k)^2 \nonumber \\
    &+(ag_1')(P_{t_k}^n,Q_{t_k}^n)(t-t_k)(W_t-W_{t_k})+\frac{1}{2}(a^2g_{11}'')(P_{t_k}^n,Q_{t_k}^n)(t-t_k)(W_t-W_{t_k})^2 \nonumber \\
    &+ (af_1'g_1'+ag_{11}''f)(P_{t_k}^n,Q_{t_k}^n)(t-t_k)^2(W_t-W_{t_k})+(f_1'g_1'f+\frac{1}{2}g_{11}''f^2)(P_{t_k}^n,Q_{t_k}^n)(t-t_k)^3 \nonumber \\
    &+R_2(P_{t_k}^n,Q_{t_k}^n,t-t_k,W_t-W_{t_k}), \quad t\in (t_k,t_{k+1}], \label{explicitqadd}
    \end{align}
    where $R_1$ and $R_2$ are real-valued continuous functions on $\mathbb{R}^2\times (-\epsilon,\epsilon)\times\mathbb{R}$ which satisfy
    \begin{align*}
    &|R_1(\zeta,h_1,h_2)|\le \sum_{\substack{\alpha+\beta=4 \\ \alpha\in\mathbb{N}^+,\beta\in\mathbb{N}}}K_1(1+\|\zeta\|^{\gamma_1})|h_1|^{\alpha}|h_2|^{\beta},  \\
    &|R_2(\zeta,h_1,h_2)|\le \sum_{\substack{\alpha+\beta=4 \\ \alpha\in\mathbb{N}^+,\beta\in\mathbb{N}}}K_2(1+\|\zeta\|^{\gamma_2})|h_1|^{\alpha}|h_2|^{\beta} 
    \end{align*}
    for some positive constants $K_1,K_2,\gamma_1,\gamma_2$.
\end{prop}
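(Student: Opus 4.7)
The plan is to mirror the structure of \Cref{mainprop} but in the simpler setting afforded by the additivity of the noise. First I would fix $\xi^0=(\xi_1^0,\xi_2^0)^\top$ and introduce
\[
F(\xi_1,\xi_2,h_1,h_2) = \begin{pmatrix} \xi_1 - \xi_1^0 - f(\xi_1,\xi_2^0) h_1 - a h_2 \\ \xi_2 - \xi_2^0 - g(\xi_1,\xi_2^0) h_1 - b h_2 \end{pmatrix}.
\]
Because $a$ and $b$ are constants, the Jacobian $\partial F/\partial(\xi_1,\xi_2)$ is lower triangular with determinant $1-f_1'(\xi_1,\xi_2^0)h_1$, which is independent of $h_2$. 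Under the assumption $n>2\sup|f_1'|$, any step $h_1=t-t_k$ satisfies $|h_1|<\epsilon$, so this determinant exceeds $1/2$ uniformly in $h_2\in\mathbb{R}$. This is the crucial difference from the multiplicative case: no restriction on the Brownian increment is needed, so the implicit scheme is well defined on all of $\Omega$ rather than only on $\Omega^n$.

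Next, the implicit function theorem provides $C^4$ solutions $\xi_1=l_1(h_1,h_2)$ and $\xi_2=l_2(h_1,h_2)$ on $(-\epsilon,\epsilon)\times\mathbb{R}$ (the regularity following from \Cref{assumption3}). The key structural observation is that setting $h_1=0$ in the defining equation yields $l_1(0,h_2)=\xi_1^0+ah_2$ exactly, with no higher-order terms in $h_2$, and analogously $l_2(0,h_2)=\xi_2^0+bh_2$. I would therefore factor $l_1(h_1,h_2)=\xi_1^0+ah_2+h_1\tilde l_1(h_1,h_2)$, where the identity $\tilde l_1(h_1,h_2)=f(l_1(h_1,h_2),\xi_2^0)$ follows directly from the implicit equation. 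The explicit expansion is then obtained by iterated substitution: substitute the lower-order expansion of $l_1$ into $f(l_1,\xi_2^0)$, Taylor expand $f$ around $(\xi_1^0,\xi_2^0)$ to third order, and collect terms of total degree at most three. A direct check confirms that this procedure reproduces the coefficients $f_1'f$, $af_1'$, $\tfrac12 a^2 f_{11}''$, $af_1'^2+af_{11}''f$ and $f_1'^2f+\tfrac12 f_{11}''f^2$ appearing in \eqref{explicitpadd}, and the analogous computation for $l_2$ produces \eqref{explicitqadd}.

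Finally, the remainder $R_1$ collects all terms of total order four, and by construction of the factorization $l_1=\xi_1^0+ah_2+h_1\tilde l_1$ every such term carries an explicit factor $h_1$, which accounts for the constraint $\alpha\in\mathbb{N}^+$ in the bound. For the polynomial-growth estimate I would apply Taylor's theorem with the integral remainder to $\tilde l_1$ up to order two; its third-order derivatives in $(h_1,h_2)$ are obtained by repeated differentiation of the implicit identity $\tilde l_1=f(l_1,\xi_2^0)$, producing expressions involving derivatives of $f$ up to order four, together with inverse powers of $1-f_1'h_1$ which remain bounded by $2$ on $|h_1|<\epsilon$. By \Cref{assumption3}, these derivatives grow at most polynomially in their arguments, and the implicit equation controls $\|l_1\|$ linearly by $\|\zeta\|+|h_1|\|f(l_1,\xi_2^0)\|+|ah_2|$, which together yield the required bound $K_1(1+\|\zeta\|^{\gamma_1})$; the treatment of $R_2$ is identical. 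The main difficulty is the bookkeeping involved in the remainder estimate, where one must verify that the uniform polynomial control on derivatives of $\tilde l_1$ survives both the implicit differentiation and the unbounded range of $h_2$; the latter is what forces the separate tracking of $h_1$ and $h_2$ factors rather than a single $|h|^{\alpha+\beta}$ bound.
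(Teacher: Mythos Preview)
Your proposal is correct and follows essentially the same approach as the paper, which simply defers to the proof of \Cref{mainprop} and omits the details. Your factorization $l_1(h_1,h_2)=\xi_1^0+ah_2+h_1\tilde l_1(h_1,h_2)$ is a minor presentational refinement that makes the constraint $\alpha\in\mathbb{N}^+$ in the remainder bound transparent, but the underlying argument---implicit function theorem plus Taylor expansion, with the observation that the Jacobian determinant $1-f_1'h_1$ is independent of $h_2$ so no restriction to $\Omega^n$ is needed---is exactly what the paper intends.
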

Based on \Cref{mainpropadd}, we give the integral form of $(P_t^n, Q_t^n)$. 
It follows from the It\^o formula that
\begin{align*}
    (t-t_k)(W_t-W_{t_k})^2 = 2\int_{t_k}^t(s-t_k)(W_s-W_{t_k})dW_s+\int_{t_k}^t(s-t_k)ds+\int_{t_k}^t(W_s-W_{t_k})^2ds
\end{align*}
and
\begin{align*}
    (t-t_k)^2(W_t-W_{t_k})=\int_{t_k}^t(s-t_k)^2dW_s+2\int_{t_k}^t(W_s-W_{t_k})(s-t_k)ds.
\end{align*}
Therefore, \eqref{explicitpadd} and  \eqref{explicitqadd} can be  written as 
\begin{align} 
    P_t^n =&\  P_0+\int_0^t f(P_{\eta_n(s)}^n,Q_{\eta_n(s)}^n)ds+aW_t+R_P^n(t), \label{pnadd} \\
    Q_t^n =&\  Q_0+\int_0^t g(P_{\eta_n(s)}^n,Q_{\eta_n(s)}^n)ds+bW_t+R_Q^n(t), \label{qnadd}
\end{align}
where $R_P^n:=R_{P,1}^n+R_{P,2}^n$ and $R_Q^n:=R_{Q,1}^n+R_{Q,2}^n$ with 
\begin{align*}
    R_{P,1}^n(t):=& \int_0^t(2f_1'f+\frac{1}{2}a^2f_{11}'')(P_{\eta_n(s)}^n,Q_{\eta_n(s)}^n)(s-\eta_n(s))ds + \int_0^taf_1'(P_{\eta_n(s)}^n,Q_{\eta_n(s)}^n)(s-\eta_n(s))dW_s \\ &+\int_0^taf_1'(P_{\eta_n(s)}^n,Q_{\eta_n(s)}^n)(W_s-W_{\eta_n(s)})ds+ \frac{1}{2}\int_0^ta^2f_{11}''(P_{\eta_n(s)}^n,Q_{\eta_n(s)}^n)(W_s-W_{\eta_n(s)})^2ds, \\
    R_{P,2}^n(t):=& \int_0^ta^2f_{11}''(P_{\eta_n(s)}^n,Q_{\eta_n(s)}^n)(s-\eta_n(s))(W_s-W_{\eta_n(s)})dW_s\\
    &+\int_0^t (af_1'^2+af_{11}''f)(P_{\eta_n(s)}^n,Q_{\eta_n(s)}^n)(s-\eta_n(s))^2dW_s \\
    &+2\int_0^t(af_1'^2+af_{11}''f)(P_{\eta_n(s)}^n,Q_{\eta_n(s)}^n)(W_s-W_{\eta_n(s)})(s-\eta_n(s))ds \\
    &+3\int_0^t(f_1'^2f+\frac{1}{2}f_{11}''f^2)(P_{\eta_n(s)}^n,Q_{\eta_n(s)}^n)(s-\eta_n(s))^2ds \\
    &+\sum_{k=0}^{[nt]-1}R_1(P_{t_k}^n,Q_{t_k}^n,t_{k+1}-t_k,W_{t_{k+1}}-W_{t_k}) +R_1(P_{\eta_n(t)}^n,Q_{\eta_n(t)}^n,t-\eta_n(t),W_t-W_{\eta_n(t)}),\\
    R_{Q,1}^n(t):=&\int_0^t(2g_1'f+\frac{1}{2}a^2g_{11}'')(P_{\eta_n(s)}^n,Q_{\eta_n(s)}^n)(s-\eta_n(s))ds + \int_0^tag_1'(P_{\eta_n(s)}^n,Q_{\eta_n(s)}^n)(s-\eta_n(s))dW_s \\ &+\int_0^tag_1'(P_{\eta_n(s)}^n,Q_{\eta_n(s)}^n)(W_s-W_{\eta_n(s)})ds+ \frac{1}{2}\int_0^ta^2g_{11}''(P_{\eta_n(s)}^n,Q_{\eta_n(s)}^n)(W_s-W_{\eta_n(s)})^2ds, \\
    R_{Q,2}^n(t):=& \int_0^ta^2g_{11}''(P_{\eta_n(s)}^n,Q_{\eta_n(s)}^n)(s-\eta_n(s))(W_s-W_{\eta_n(s)})dW_s\\
    &+\int_0^t (af_1'g_1'+ag_{11}''f)(P_{\eta_n(s)}^n,Q_{\eta_n(s)}^n)(s-\eta_n(s))^2dW_s \\
    &+2\int_0^t(af_1'g_1'+ag_{11}''f)(P_{\eta_n(s)}^n,Q_{\eta_n(s)}^n)(W_s-W_{\eta_n(s)})(s-\eta_n(s))ds \\
    &+3\int_0^t(f_1'g_1'f+\frac{1}{2}g_{11}''f^2)(P_{\eta_n(s)}^n,Q_{\eta_n(s)}^n)(s-\eta_n(s))^2ds \\
    &+\sum_{k=0}^{[nt]-1}R_2(P_{t_k}^n,Q_{t_k}^n,t_{k+1}-t_k,W_{t_{k+1}}-W_{t_k}) +R_2(P_{\eta_n(t)}^n,Q_{\eta_n(t)}^n,t-\eta_n(t),W_t-W_{\eta_n(t)}).
\end{align*}

We state the convergence results for the remainder terms in the following proposition. The proof is analogous to that of \Cref{propconverge0} and is therefore omitted.
\begin{prop} \label{propconverge0add}
    Let \Cref{assumption3} hold, then $R_P^n,R_Q^n \to^{\mathbb{P}} 0$ and $nR_{P,2}^n,nR_{Q,2}^n\to^{\mathbb{P}}0$ in $\mathcal{C}([0,T];\mathbb{R})$ as $n\to \infty$.
\end{prop}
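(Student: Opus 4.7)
The plan is to mirror the proof of \Cref{propconverge0} term by term. The main simplification is that, because $a,b$ are now constants, no truncated-increment remainder $R_{P,3}^n$ appears, and for $n > 2\sup|f_1'|$ the explicit representation of \Cref{mainpropadd} holds on all of $\Omega$, so one does not need the exceptional set $\Omega^n$ used in \Cref{Sectionthree}. Throughout, the argument relies on the uniform moment bound $\sup_{t\in[0,T]} \mathbb{E}[\|(P_t^n,Q_t^n)\|^q] \le C(q)$ for every $q\ge 1$ and $n$ sufficiently large, which follows recursively from \eqref{explicitpadd}--\eqref{explicitqadd} via \Cref{assumption3}; together with the polynomial growth of the partial derivatives appearing in the remainders, this uniformly controls the $L^q$ norms of all coefficients $\phi(P_{\eta_n(s)}^n,Q_{\eta_n(s)}^n)$.

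First I would handle $R_{P,1}^n$. Each of its four summands contains a small factor of the form $s-\eta_n(s)$, $W_s - W_{\eta_n(s)}$, or $(W_s - W_{\eta_n(s)})^2$; the first is pathwise of order $1/n$, while the latter two have $L^p$ norms of order $n^{-1/2}$ and $n^{-1}$ respectively. For the three Lebesgue integrals I would bound the supremum in $t$ by the integral up to $T$, take expectation, and apply these moment estimates together with H\"older in $\omega$. For the single It\^o integral I would apply the Burkholder--Davis--Gundy (BDG) inequality. The resulting $L^1$ bound on $\sup_{t\in[0,T]}|R_{P,1}^n(t)|$ is of order $n^{-1/2}$ at worst, giving convergence in probability in $\mathcal{C}([0,T];\mathbb{R})$; $R_{Q,1}^n$ is handled identically with $(g, g_1', g_{11}'')$ in place of $(f, f_1', f_{11}'')$.

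For $nR_{P,2}^n$, I would examine each of the six summands with the additional factor $n$ in front. The two It\^o integrals have integrands of sizes $n^{-3/2}$ and $n^{-2}$, so BDG yields $\sup$-$L^2$ bounds $O(n^{-3/2})$ and $O(n^{-2})$, both surviving multiplication by $n$. The two Lebesgue integrals with mixed factors reduce after Cauchy--Schwarz in $\omega$ to bounds of orders $\int_0^T(s-\eta_n(s))^{3/2}ds = O(n^{-3/2})$ and $\int_0^T(s-\eta_n(s))^2 ds = O(n^{-2})$. The remaining contribution, where care is most needed, is the discrete sum $\sum_{k=0}^{[nt]-1} R_1(P_{t_k}^n,Q_{t_k}^n, t_{k+1}-t_k, W_{t_{k+1}}-W_{t_k})$ together with its boundary term; using the pointwise bound
\[
|R_1(\zeta,h_1,h_2)| \le K_1(1+\|\zeta\|^{\gamma_1}) \sum_{\substack{\alpha+\beta=4\\ \alpha\in\mathbb{N}^+,\beta\in\mathbb{N}}} |h_1|^\alpha |h_2|^\beta
\]
from \Cref{mainpropadd}, combined with $\mathbb{E}[|\Delta W_k|^\beta] = O(n^{-\beta/2})$ and the uniform moment bound, each summand contributes expected modulus of order $n^{-(\alpha+\beta/2)}$ with $\alpha+\beta=4$, $\alpha\ge 1$, hence $\alpha + \beta/2 \ge 5/2$; summing the $nT$ terms and multiplying by $n$ yields an overall $L^1$ bound of order $n^{-1/2}\to 0$. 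I expect the principal technical work to lie precisely in this exponent bookkeeping, verifying that the $5/2$ threshold is met uniformly over all admissible $(\alpha,\beta)$. By symmetry, $nR_{Q,2}^n$ is treated identically, only replacing $f$-derivatives with $g$-derivatives. Since $R_{P,2}^n = n^{-1}(nR_{P,2}^n)$, we simultaneously deduce $R_{P,2}^n \to^{\mathbb{P}} 0$, and combining this with the estimate for $R_{P,1}^n$ gives $R_P^n \to^{\mathbb{P}} 0$; $R_Q^n$ is disposed of in the same way.
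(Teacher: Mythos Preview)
Your proposal is correct and matches the paper's approach: the paper omits the proof entirely, stating only that it is analogous to \Cref{propconverge0}, and your plan reproduces precisely that analogy with the right simplifications (no truncation remainder $R_{P,3}^n$, no need for $\Omega^n$). The exponent bookkeeping you outline for the discrete $R_1$-sum is accurate, with the worst case $\alpha+\beta/2=5/2$ occurring at $(\alpha,\beta)=(1,3)$.
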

\begin{proof}[Proof of \Cref{mainthmadd}]
Given \eqref{pnadd}, \eqref{qnadd} and the fact that $R_P^n, R_Q^n \to^{\mathbb{P}}0$, \Cref{propmain} yields $(P^n, Q^n, W) \to^{\mathbb{P}} (P, Q, W)$ as $n \to \infty$. Consequently, we also have $(P^n, Q^n, W) \Rightarrow (P, Q, W)$.
For $U_P^n = n(P^n-P)$, comparing \eqref{Generaladdequ2d} and \eqref{pnadd} leads to
\begin{align} \label{equ_add_expressionUpn}
    U_P^n(t) = n\int_0^t[f(P_{\eta_n(s)}^n,Q_{\eta_n(s)}^n)-f(P_s,Q_s)]ds+nR_{P,1}^n(t)+nR_{P,2}^n(t),
\end{align}
where $nR_{P,2}^n\to^\mathbb{P}0$ (see \Cref{propconverge0add}). 
Note that
\begin{align} \label{nsds}
    \lim_{n\to \infty} n\int_0^t(s-\eta_n(s))ds=\frac{1}{2}t,
\end{align}
and by \cite[Theorem 1.2]{Rootzen1980}, we have
\begin{equation} \label{nsdW}
    \begin{split} 
        n\int_0^{\cdot}(s-\eta_n(s))dW_s &= n\int_0^{\cdot}(s+\frac{1}{2n}-\frac{1}{2n}-\eta_n(s))dW_s \\
        &=\frac{1}{2}W+n\int_0^{\cdot}(s-\frac{1}{2n}-\eta_n(s))dW_s \Rightarrow \frac{1}{2}W-\frac{\sqrt{3}}{6}B,
    \end{split}
\end{equation}
and 
\begin{equation} \label{nWds}
    \begin{split} 
        n\int_0^{\cdot}(W_s-W_{\eta_n(s)})ds &= n\int_0^{\cdot}\int_{\eta_n(s)}^sdW_uds = n\int_0^{\cdot}\int_u^{\eta_n(u)+1/n}dsdW_u \\
        &=n\int_0^{\cdot}(\eta_n(u)+\frac{1}{n}-u)dW_u \Rightarrow \frac{1}{2}W+\frac{\sqrt{3}}{6}B,
    \end{split}
\end{equation}
where $B$ is a Brownian motion independent of $W$. Moreover, we arrive at 
\begin{equation} \label{nW^2ds}
    \begin{split}
        n\int_0^{\cdot} (W_s-W_{\eta_n(s)})^2ds &= 2n\int_0^{\cdot} \int_{\eta_n(s)}^s(W_u-W_{\eta_n(u)})dW_uds+n\int_0^{\cdot}(s-\eta_n(s))ds \Rightarrow \frac{1}{2}\int^{\cdot}_0 ds,
    \end{split}
\end{equation}
which follows from the fact that
\begin{align*}
    &2n\int_0^{\cdot} \int_{\eta_n(s)}^s(W_u-W_{\eta_n(u)})dW_uds\\
    =&\ 2n\int_0^{\cdot}\int_u^{\eta_n(u)+1/n}(W_u-W_{\eta_n(u)})dsdW_u 
    =2n\int_0^{\cdot}(W_u-W_{\eta_n(u)})(\eta_n(u)+\frac{1}{n}-u)dW_u \\
    =&\ 2\int_0^{\cdot}(W_u-W_{\eta_n(u)})dW_u-2n\int_0^{\cdot}(W_u-W_{\eta_n(u)})(u-\eta_n(u))dW_u \to^\mathbb{P} 0
\end{align*}
in $\mathcal{C}([0,T];\mathbb{R})$.
The goodness of the integrals on the left-hand side of equations \eqref{nsds}-\eqref{nW^2ds} is justified by \cite[Theorem 2.7]{Burdzy2010}.
Given that $(P^n,Q^n)\Rightarrow(P,Q)$, it follows from \eqref{nsds}-\eqref{nW^2ds} and \cite[Lemma 3.2]{KurtzProtter1991b} that 
\begin{align*}
    nR_{P,1}^n \Rightarrow \int_0^{\cdot}(f_1'f+\frac{1}{2}a^2f_{11}'')(P_s,Q_s)ds+\int_0^{\cdot}(af_1')(P_s,Q_s)dW_s
\end{align*}
in $\mathcal C([0,T];\mathbb R)$.
Next, it remains to analyze the convergence of the first term on the right-hand side of \eqref{equ_add_expressionUpn}, which can be decomposed as
\begin{align*}
    &n\int_0^t[f(P_{\eta_n(s)}^n,Q_{\eta_n(s)}^n)-f(P_s,Q_s)]ds \\
    =& \ n\int_0^t[f(P_s^n,Q_s^n)-f(P_s,Q_s)]ds -n\int_0^t[f(P_s^n,Q_s^n)-f(P_{\eta_n(s)}^n,Q_{\eta_n(s)}^n)]ds
    =: S_1^n(t)-S_2^n(t).
\end{align*}
For $S_1^n$,
\begin{align*}
    S_1^n(t) = \int_0^t[f_1'(P_s,Q_s)U_P^n(s)+f_2'(P_s,Q_s)U_Q^n(s)]ds+S_{1_R}^n(t),
\end{align*}
where
\begin{align*}
    S_{1_R}^n(t):=&\  n \int_0^t \int_0^1 (1-\lambda)(X_s^n-X_s)^{\top}D^2f(\Theta_1(\lambda,s))(X_s^n-X_s)d\lambda ds.
\end{align*}
with $X^n:=(P^n,Q^n)^{\top}$ and $X:=(P,Q)^{\top}$. Thus, an analogous argument to that used for the first term of $I_{1_R}^n$ in \eqref{I1Rn} shows that $S_{1_R}^n\Rightarrow0$.
For $S_2^n$, plugging \eqref{explicitpadd} and \eqref{explicitqadd} into $S_2^n$ gives that
\begin{align*}
    S_2^n(t) =& \ n\int_0^tf_1'(P_{\eta_n(s)}^n,Q_{\eta_n(s)}^n)(P_s^n-P_{\eta_n(s)}^n)ds+n\int_0^tf_2'(P_{\eta_n(s)}^n,Q_{\eta_n(s)}^n)(Q_s^n-Q_{\eta_n(s)}^n)ds \\
    &+\frac{1}{2}n\int_0^t f_{11}''(P_{\eta_n(s)}^n,Q_{\eta_n(s)}^n)(P_s^n-P_{\eta_n(s)}^n)^2ds+\frac{1}{2}n\int_0^t f_{22}''(P_{\eta_n(s)}^n,Q_{\eta_n(s)}^n)(Q_s^n-Q_{\eta_n(s)}^n)^2ds \\
    &+n\int_0^t f_{12}''(P_{\eta_n(s)}^n,Q_{\eta_n(s)}^n)(P_s^n-P_{\eta_n(s)}^n)(Q_s^n-Q_{\eta_n(s)}^n)ds + S_{2_R}^n(t) \\
    =&\  n\int_0^t(f_1'f+f_2'g)(P_{\eta_n(s)}^n,Q_{\eta_n(s)}^n)(s-\eta_n(s))ds +n\int_0^t(af_1'+bf_2')(P_{\eta_n(s)}^n,Q_{\eta_n(s)}^n)(W_s-W_{\eta_n(s)})ds \\
    &+\frac{1}{2}n\int_0^t(a^2f_{11}''+b^2f_{22}''+2abf_{12}'')(P_{\eta_n(s)}^n,Q_{\eta_n(s)}^n)(W_s-W_{\eta_n(s)})^2ds+S_{21}^n(t)+S_{2_R}^n(t),
\end{align*}
where $S_{21}^n(t)$ consists of higher-order terms such as $n\int_0^t af_1'^2(P_{\eta_n(s)}^n,Q_{\eta_n(s)}^n)(s-\eta_n(s))(W_s-W_{\eta_n(s)})ds$. By a similar argument to the proof of \Cref{propconverge0}, it is clear that $S_{21}^n\Rightarrow 0$. $S_{2_R}^n$ denotes the remainder in the integral form of the Taylor expansion, that is,
\begin{align*}
    S_{2_R}^n(t) = \frac{n}{2}\int_0^t\int_0^1 (1-\lambda)^2\Big( (P_s^n-P_{\eta_n(s)}^n)\frac{\partial}{\partial P}+(Q_s^n-Q_{\eta_n(s)}^n)\frac{\partial}{\partial Q}\Big)^3f(\Theta_2(\lambda,s))d\lambda ds.
\end{align*}
As in the convergence analysis of the second term of $I_{1_R}^n$ in \eqref{I1Rn}, it holds that
\begin{align*}
    &\mathbb E\Big[\sup_{t\in[0,T]}|S_{2_R}^n(t)|^2\Big] \\
    \le &\ n^2C\mathbb E\Big[ \int_0^T \big(\|X^n_s-X_{\eta_n(s)}^n\|^6+\|X_{\eta_n(s)}^n\|^{2\gamma}\|X^n_s-X_{\eta_n(s)}^n\|^6+\|X^n_s-X_{\eta_n(s)}^n\|^{2\gamma+6}\big)ds  \Big]\to 0.
\end{align*} 
Thus, by \eqref{nsds}-\eqref{nW^2ds},
\begin{align*}
    S_2^n \Rightarrow &\ \frac{1}{2}\int_0^{\cdot} (f_1'f+f_2'g)(P_s,Q_s)ds+\frac{1}{4}\int_0^{\cdot}(a^2f_{11}''+b^2f_{22}''+2abf_{12}'')(P_s,Q_s)ds \\
    &+\frac{1}{2}\int_0^{\cdot}(af_1'+bf_2')(P_s,Q_s)dW_s+\frac{\sqrt{3}}{6}\int_0^{\cdot}(af_1'+bf_2')(P_s,Q_s)dB_s.
\end{align*}

Collecting the estimates above yields 
\begin{align*}
    U_P^n(t) = \int_0^t[f_1'(P_s,Q_s)U_P^n(s)+f_2'(P_s,Q_s)U_Q^n(s)]ds+M_1^n(t),
\end{align*}
where $M_1^n:=S_{1_R}^n-S_2^n+nR_{P,1}^n+nR_{P,2}^n$ satisfies
\begin{align*}
    M_1^n\Rightarrow &\ \frac{1}{2}\int_0^{\cdot} (f_1'f-f_2'g)(P_s,Q_s)ds+\frac{1}{4}\int_0^{\cdot}(a^2f_{11}''-b^2f_{22}''-2abf_{12}'')(P_s,Q_s)ds \\
    &+\frac{1}{2}\int_0^{\cdot}(af_1'-bf_2')(P_s,Q_s)dW_s-\frac{\sqrt{3}}{6}\int_0^{\cdot}(af_1'+bf_2')(P_s,Q_s)dB_s
\end{align*}
in $\mathcal C([0,T];\mathbb R)$.
Applying a similar analysis to $U_Q^n$, we have 
\begin{align*}
    U_Q^n(t) = \int_0^t[g_1'(P_s,Q_s)U_P^n(s)+g_2'(P_s,Q_s)U_Q^n(s)]ds+M_2^n(t),
\end{align*}
where
\begin{align*}
    M_2^n \Rightarrow &\ \frac{1}{2}\int_0^{\cdot} (g_1'f-g_2'g)(P_s,Q_s)ds+\frac{1}{4}\int_0^{\cdot}(a^2g_{11}''-b^2g_{22}''-2abg_{12}'')(P_s,Q_s)ds \\
    &+\frac{1}{2}\int_0^{\cdot}(ag_1'-bg_2')(P_s,Q_s)dW_s-\frac{\sqrt{3}}{6}\int_0^{\cdot}(ag_1'+bg_2')(P_s,Q_s)dB_s.
\end{align*}
Finally, by \Cref{rmk1}, we obtain $(U_P^n,U_Q^n,W) \Rightarrow (U_P,U_Q,W)$ and $(U_P^n,U_Q^n)$ satisfies \eqref{generalUadd} with $d=1$ and $\theta=1$. Furthermore, by \Cref{lemmastable}(\romannumeral 2), $(U_P^n,U_Q^n)\Rightarrow^{stably} U:=(U_P,U_Q)$.
\end{proof}

Similarly, the obtained asymptotic error distribution in the additive noise case also admits a stochastic Hamiltonian formulation, and the limiting distribution of the normalized Hamiltonian deviation can be established as well. The proofs follow the same arguments as those in \Cref{3.3} and are therefore omitted.
\begin{thm} \label{UisHamiltonianadd}
    The asymptotic error distribution  $U:=(U_P,U_Q)$ given in \eqref{generalUadd} still
has a Hamiltonian formulation, satisfying
    \begin{equation*} 
    \begin{split}
        d \begin{pmatrix}
            U_P \\
            U_Q
        \end{pmatrix} =& \begin{pmatrix}
            -\frac{\partial H_0}{\partial U_Q} \\
            \frac{\partial H_0}{\partial U_P}
        \end{pmatrix} dt+\begin{pmatrix}
            -\frac{\partial H_1}{\partial U_Q} \\
            \frac{\partial H_0}{\partial U_P}
        \end{pmatrix} \circ dW_t + \begin{pmatrix}
            -\frac{\partial H_2}{\partial U_Q} \\
            \frac{\partial H_0}{\partial U_P}
        \end{pmatrix} \circ dB_t,
    \end{split}
    \end{equation*}
    where 
    \begin{align*}
        H_0 =& \ \frac{1}{2}U_P^{\top}\big(\frac{\partial g}{\partial P}\big)(P,Q) U_P-U_P^{\top}\big(\frac{\partial f}{\partial P}\big)(P,Q)U_Q-\frac{1}{2}U_Q^{\top}\big(\frac{\partial f}{\partial Q}\big)(P,Q)U_Q \\
        &+(\theta-\frac{1}{2})\sum_{i=1}^d \bigg(\Big[\big(\frac{\partial g^i}{\partial P}\big)^{\top}f-\big(\frac{\partial g^i}{\partial Q}\big)^{\top}g\Big](P,Q)U_{P,i}-\Big[\big(\frac{\partial f^i}{\partial P}\big)^{\top}f-\big(\frac{\partial f^i}{\partial Q}\big)^{\top}g\Big](P,Q)U_{Q,i}\bigg) \\
        &-\sum_{i=1}^d \Big(\frac{1}{2}\theta(1-\theta)a^{\top}\frac{\partial^2 g^i}{\partial P^2}a-\big(\theta(1-\theta)-\frac{1}{2}\big)a^{\top}\frac{\partial^2 g^i}{\partial P\partial Q}b+\frac{1}{2}\theta(1-\theta)b^{\top}\frac{\partial^2 g^i}{\partial Q^2}b\Big)(P,Q)U_{P,i} \\
        &+\sum_{i=1}^d \Big(\frac{1}{2}\theta(1-\theta)a^{\top}\frac{\partial^2 f^i}{\partial P^2}a-\big(\theta(1-\theta)-\frac{1}{2}\big)a^{\top}\frac{\partial^2 f^i}{\partial P\partial Q}b+\frac{1}{2}\theta(1-\theta)b^{\top}\frac{\partial^2 f^i}{\partial Q^2}b\Big)(P,Q)U_{Q,i}, \\
        H_1 =&\  (\theta-\frac{1}{2})\sum_{i=1}^d \bigg(\Big[\big(\frac{\partial g^i}{\partial P}\big)^{\top}a-\big(\frac{\partial g^i}{\partial Q}\big)^{\top}b\Big](P,Q)U_{P,i}-\Big[\big(\frac{\partial f^i}{\partial P}\big)^{\top}a-\big(\frac{\partial f^i}{\partial Q}\big)^{\top}b\Big](P,Q)U_{Q,i}\bigg), \\
        H_2 =&\  -\frac{\sqrt{3}}{6} \sum_{i=1}^d \bigg(\Big[\big(\frac{\partial g^i}{\partial P}\big)^{\top}a+\big(\frac{\partial g^i}{\partial Q}\big)^{\top}b\Big](P,Q)U_{P,i}-\Big[\big(\frac{\partial f^i}{\partial P}\big)^{\top}a+\big(\frac{\partial f^i}{\partial Q}\big)^{\top}b\Big](P,Q)U_{Q,i}\bigg).
    \end{align*}
\end{thm}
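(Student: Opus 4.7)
The plan is to mirror the proof of Theorem \ref{UisHamiltonian}: convert the It\^o equation \eqref{generalUadd} into Stratonovich form, and then verify that the resulting equation is a canonical Hamiltonian SDE with the three specified Hamiltonians.

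First, I would perform the It\^o--Stratonovich conversion in \eqref{generalUadd}. Since both the $W$- and $B$-diffusion coefficients in \eqref{generalUadd} depend only on $(P_t,Q_t)$ (not on $U$), and since $(P_t,Q_t)$ is driven by $W$ alone with $B$ independent of $W$, only the $W$-integral contributes a correction. Using $dP=f\,dt+a\,dW$ and $dQ=g\,dt+b\,dW$ from \eqref{Generaladdequ2d}, the correction for $dU_{P,i}$ is $-\tfrac12\bigl[(\partial_P\sigma_{P,i}^W)\cdot a+(\partial_Q\sigma_{P,i}^W)\cdot b\bigr]dt$, where $\sigma_{P,i}^W=(\theta-\tfrac12)[(\partial f^i/\partial P)^\top a-(\partial f^i/\partial Q)^\top b](P,Q)$. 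Upon expansion the mixed-Hessian terms $a^\top(\partial^2 f^i/\partial P\partial Q)b$ and $b^\top(\partial^2 f^i/\partial Q\partial P)a$ coincide by Schwarz's theorem applied to $f^i$. Adding this correction to the explicit $\theta$-dependent Hessian coefficients $\tfrac12\theta^2-\tfrac14$, $\theta(1-\theta)-\tfrac12$, $\tfrac12(1-\theta)^2-\tfrac14$ already present in \eqref{generalUadd} yields, after algebraic simplification, the Stratonovich Hessian coefficients $-\tfrac12\theta(1-\theta)$, $\theta(1-\theta)-\tfrac12$, $-\tfrac12\theta(1-\theta)$, which are precisely the coefficients appearing in the last two $\sum_i$ in the definition of $H_0$.

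Second, using the SHS structure of \eqref{Generaladdequ2d}---namely $f=-\partial H/\partial Q$, $g=\partial H/\partial P$, so that the identity $\partial f/\partial P=-(\partial g/\partial Q)^\top$ holds and the matrices $\partial f/\partial Q$ and $\partial g/\partial P$ are symmetric---I would verify by direct differentiation that $-\partial H_0/\partial U_{Q,i}$ reproduces the Stratonovich drift of $dU_{P,i}$ and that $\partial H_0/\partial U_{P,i}$ reproduces that of $dU_{Q,i}$. The three rows in the definition of $H_0$ correspond, in order, to the linear-in-$U$ drift, the non-homogeneous $(\theta-\tfrac12)$-drift $(\theta-\tfrac12)[(\partial f^i/\partial P)^\top f-(\partial f^i/\partial Q)^\top g](P,Q)$, and the Stratonovich Hessian drift identified in Step~1. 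For the diffusion parts, differentiating $H_1$ in $U_Q$ (resp.\ $U_P$) reproduces the $\circ dW$-coefficient of $dU_P$ (resp.\ $dU_Q$), and similarly $H_2$ reproduces the $\circ dB$-coefficients; both computations again rely on the SHS symmetries of $f$ and $g$.

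The main obstacle is the algebraic bookkeeping in Step~1: the three $\theta$-dependent Hessian coefficients in the It\^o drift of \eqref{generalUadd} must combine precisely with the $-\tfrac12 d\langle\sigma,W\rangle$ correction---which itself uses that $\sigma_{P,i}^W$ and $\sigma_{Q,i}^W$ are functions of $(P,Q)$ driven by $W$---to yield the specific coefficients inside $H_0$. Once this identification is in place, the verification of the Hamiltonian form reduces, exactly as in the proof of Theorem \ref{UisHamiltonian}, to an application of the SHS symmetry identities and the symmetry of mixed partial derivatives of the underlying Hamiltonian $H$.
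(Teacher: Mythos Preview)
Your proposal is correct and follows exactly the approach the paper indicates: the paper omits the proof of Theorem~\ref{UisHamiltonianadd} and simply states that it ``follows the same arguments as those in \Cref{3.3},'' i.e., the proof of Theorem~\ref{UisHamiltonian}. Your two-step plan---It\^o--Stratonovich conversion using that the diffusion coefficients depend only on $(P,Q)$ and that $B\perp W$, followed by verification of the Hamiltonian identities via the SHS relations $\partial f/\partial P=-(\partial g/\partial Q)^{\top}$, $\partial f/\partial Q$ and $\partial g/\partial P$ symmetric---is precisely that argument, and your coefficient check in Step~1 (the mixed Hessian terms cancel, leaving $-\tfrac12\theta(1-\theta)$, $\theta(1-\theta)-\tfrac12$, $-\tfrac12\theta(1-\theta)$) is accurate.
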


\begin{thm} \label{thmdeltaHadd}
    If $H$ is one of the Hamiltonians in \eqref{Generaladdequ2d} and $U=(U_P,U_Q)$ is the asymptotic error distribution given in \eqref{generalUadd}, then 
    \begin{align*}
        n(H(P^n,Q^n)-H(P,Q))\Rightarrow^{stably} \Big(\frac{\partial H}{\partial P} (P,Q) \Big)^{\top} U_P+\Big(\frac{\partial H}{\partial Q}(P,Q)\Big)^{\top} U_Q
    \end{align*}
    in $\mathcal C([0,T];{\mathbb{R}})$.
\end{thm}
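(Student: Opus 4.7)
The plan is to mimic the proof of \Cref{thmdeltaH} almost line by line, exploiting the fact that in the additive noise case the normalization is $n$ and the strong convergence order of the class of symplectic methods \eqref{generalsymadd} is $1$. First, I would apply the second-order Taylor formula to $H$ at the point $(P_t,Q_t)$ to obtain
\begin{align*}
n\bigl(H(P_t^n,Q_t^n)-H(P_t,Q_t)\bigr)
&= \Big(\tfrac{\partial H}{\partial P}(P_t,Q_t)\Big)^{\top} U_P^n(t)+\Big(\tfrac{\partial H}{\partial Q}(P_t,Q_t)\Big)^{\top} U_Q^n(t) + nR^n(t),
\end{align*}
where $R^n(t)$ is the standard quadratic remainder, namely the integral $\int_0^1(1-\lambda)(X_t^n-X_t)^{\top} D^2H(\Theta(\lambda,t))(X_t^n-X_t)d\lambda$ with $X^n:=(P^n,Q^n)^{\top}$, $X:=(P,Q)^{\top}$, and $\Theta(\lambda,t):=X_t+\lambda(X_t^n-X_t)$.

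Second, I would handle the linear part. From \Cref{mainthmadd} we have $(U_P^n,U_Q^n)\Rightarrow^{stably} (U_P,U_Q)$ in $\mathcal{C}([0,T];\mathbb{R}^{2d})$, and the continuous processes $\tfrac{\partial H}{\partial P}(P,Q)$, $\tfrac{\partial H}{\partial Q}(P,Q)$ are defined on the original probability space, so trivially converge to themselves in probability. An application of \Cref{lemmastable}(i) together with the continuity of the inner product then yields
\begin{align*}
\Big(\tfrac{\partial H}{\partial P}(P,Q)\Big)^{\top} U_P^n+\Big(\tfrac{\partial H}{\partial Q}(P,Q)\Big)^{\top} U_Q^n
\Rightarrow^{stably}\Big(\tfrac{\partial H}{\partial P}(P,Q)\Big)^{\top} U_P+\Big(\tfrac{\partial H}{\partial Q}(P,Q)\Big)^{\top} U_Q
\end{align*}
in $\mathcal{C}([0,T];\mathbb{R})$.

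Third, I would show that $nR^n\to^{\mathbb{P}}0$ in $\mathcal{C}([0,T];\mathbb{R})$. Under \Cref{assumption3}, since $f=-\partial H/\partial Q$ and $g=\partial H/\partial P$ have $D^k f, D^k g$ of polynomial growth for $k=2,3,4$, it follows that $D^2 H$ is of polynomial growth. Combining this with the uniform moment bounds for $X$ and $X^n$, together with the strong convergence order $1$ of \eqref{generalsymadd} in the additive noise case (so $\mathbb{E}[\sup_{t\in[0,T]}\|X_t^n-X_t\|^{2q}]^{1/2q}\le C/n$ for every $q\ge 1$), the Cauchy--Schwarz inequality gives
\begin{align*}
\mathbb{E}\Big[\sup_{t\in[0,T]}|nR^n(t)|^2\Big]
\le C n^2 \mathbb{E}\Big[\sup_{t\in[0,T]}(1+\|X_t\|^{2\gamma}+\|X_t^n\|^{2\gamma})\|X_t^n-X_t\|^4\Big]\le \frac{C}{n^2}\to 0.
\end{align*}

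Finally, one more application of \Cref{lemmastable}(i) combines the stable convergence of the linear part with the convergence in probability of $nR^n$ to $0$, yielding the claimed stable convergence in $\mathcal{C}([0,T];\mathbb{R})$. The argument is essentially routine and contains no significant obstacle beyond verifying the quadratic remainder vanishes, which hinges precisely on the improvement of the strong order from $1/2$ to $1$ in the additive case; this improvement is exactly what matches the normalization $n$ with the $1/n^2$ decay of the quadratic term.
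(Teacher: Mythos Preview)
Your proposal is correct and follows the same Taylor-expansion strategy that the paper uses for \Cref{thmdeltaH} (the paper omits the proof of \Cref{thmdeltaHadd}, stating it is analogous). The only noteworthy difference is in how you dispose of the quadratic remainder $nR^n$. The paper's argument exploits that under \Cref{assumption3}(\romannumeral1) the functions $f=-\partial H/\partial Q$ and $g=\partial H/\partial P$ are globally Lipschitz, so $D^2H$ is \emph{bounded} (not merely of polynomial growth); it then writes $nR^n$ as the product of $n(X^n-X)$, which converges stably to $U$, and $D^2H(\Theta)(X^n-X)$, which converges to $0$ in probability by boundedness of $D^2H$, and concludes via \Cref{lemmastable}(i) without invoking any quantitative rate. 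Your route instead uses the strong order-$1$ moment bound $\mathbb{E}[\sup_t\|X_t^n-X_t\|^{2q}]^{1/2q}\le C/n$ to force $nR^n\to 0$ in $L^2$. Both are valid; the paper's is slightly slicker because it avoids the uniform-in-time $L^{2q}$ estimate (which, while standard, is not stated explicitly in the paper), whereas your argument would still go through under the weaker hypothesis that $D^2H$ has only polynomial growth.
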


\section{A new approach for deriving the asymptotic error distribution via stochastic modified equation} \label{Sectionfive}
In this section, we introduce the stochastic modified equation in the It\^o sense with respect to strong convergence (see \cite{Deng2016} in the Stratonovich sense) and propose a new approach for deriving the asymptotic error distribution. 

\subsection{Construction of stochastic modified equation}

For a stochastic differential equation
\begin{align} \label{SDE}
    dX_t = \sum_{r=0}^m f_r(X_t)dW_t^r,
\end{align}
where $X_t \in \mathbb{R}^{\bf d}$, $f_r:\mathbb{R}^{\bf d}\to \mathbb{R}^{\bf d}$, $W^r,r=1,\dots,m$ are independent 1-dimensional Brownian motions, and $t$ is denoted by $W^0$ for notational convenience. To express the stochastic modified equation, we first define the multiple It\^o integral 
\begin{align*}
    I_{\alpha,t} = \int_0^t\int_0^{s_l} \dots \int_0^{s_2} dW_{s_1}^{j_1}\dots dW_{s_{l-1}}^{j_{l-1}}dW_{s_l}^{j_l},
\end{align*}
where $\alpha=(j_1,\dots,j_l)$ with  $j_i\in \{0,1,\dots,m\}$ for $i=1,\dots,l$ is a multi-index. The length of $\alpha$ is denoted by $l(\alpha)$. A multi-index of length zero $v$ is included for completeness with $I_{v,t}=1$.
We define the stochastic processes $Y_{\alpha,t}$ for the step size $\frac 1n$ by $Y_{v,t}:=1$ and 
\begin{align*}
    Y_{\alpha,t} := \int_{\frac kn}^t\int_{\frac kn}^{s_l} \dots \int_{\frac kn}^{s_2} dW_{s_1}^{j_1}\dots dW_{s_{l-1}}^{j_{l-1}}dW_{s_l}^{j_l}, \quad \frac kn<t\le \frac {k+1}{n}, \quad k\in\mathbb{N}
\end{align*}
for $l(\alpha)>0$. It is clear that $Y_{\alpha,t}=I_{\alpha,t}$ for $0<t\le \frac 1n$.


We suppose that the stochastic modified equation with respect to strong convergence can be expressed as
\begin{align} \label{generalME}
    \tilde{X_t} = X_0+\sum_{r=0}^m \int_0^t\sum_\alpha \tilde{f}_{r,\alpha}(\tilde{X_s})Y_{\alpha,s} dW_s^r
\end{align}
with
$\tilde{f}_{r,\alpha}:\mathbb{R}^{\bf d} \to \mathbb{R}^{\bf d}$ and further denote
$\tilde{X}_t=(\tilde{X}_{t,1},\dots,\tilde{X}_{t,\bf d})$ and $\tilde{f}_{r,\alpha}=(\tilde{f}_{r,\alpha,1},\dots,\tilde{f}_{r,\alpha,\bf d})$ with $\tilde{f}_{r,\alpha,i}:\mathbb{R}^{\bf d} \to \mathbb{R}$ for $i=1,\dots,\bf d$. 
When $0<t\le \frac 1n$, it follows from the It\^o formula that
\begin{align*}
    d(\tilde{f}_{r,\alpha,i}(\tilde{X}_s)) =&\  (\nabla \tilde{f}_{r,\alpha,i}(\tilde{X}_s))^{\top}\sum_{k=0}^m\sum_{\beta}\tilde{f}_{k,\beta}(\tilde{X}_s)I_{\beta,s}dW_s^k + \frac{1}{2}(d\tilde{X}_s)^{\top} \nabla^2\tilde{f}_{r,\alpha,i}(\tilde{X}_s)(d\tilde{X}_s) \\
    =& \sum_{j=1}^d(\tilde{f}_{r,\alpha,i})_{x_j}(\tilde{X}_s)\sum_{k=0}^m\sum_{\beta}\tilde{f}_{k,\beta,j}(\tilde{X}_s)I_{\beta,s}dW_s^k \\
    &+ \frac{1}{2}\sum_{j,l=1}^d(\tilde{f}_{r,\alpha,i})_{x_jx_l}(\tilde{X}_s)\sum_{k=1}^m\sum_\beta \sum_\gamma\tilde{f}_{k,\beta,j}\tilde{f}_{k,\gamma,l}(\tilde{X}_s)I_{\beta,s}I_{\gamma,s}ds,
\end{align*}
where $(\tilde{f}_{r,\alpha,i})_{x_j} = \frac{\partial \tilde{f}_{r,\alpha,i}}{\partial x_j}$, $(\tilde{f}_{r,\alpha,i})_{x_jx_l} = \frac{\partial^2 \tilde{f}_{r,\alpha,i}}{\partial x_j \partial x_l}$. This leads to
\begin{align} \label{ME}
        \tilde{X}_{t,i} =&\  X_{0,i} + \sum_{r=0}^m \sum_\alpha \int_0^t \tilde{f}_{r,\alpha,i}(X_0)I_{\alpha,s}dW_s^r \nonumber \\
        &+ \sum_{r=0}^m \sum_\alpha \sum_{j=1}^d \sum_{k=0}^m\sum_\beta \int_0^t[\int_0^s(\tilde{f}_{r,\alpha,i})_{x_j}\tilde{f}_{k,\beta,j}(\tilde{X}_\tau)I_{\beta,\tau}dW_{\tau}^k]I_{\alpha,s}dW_s^r \\
        &+\sum_{r=0}^m\sum_\alpha \sum_{j,l=1}^d\sum_{k=1}^m\sum_\beta \sum_\gamma \frac{1}{2} \int_0^t[\int_0^s(\tilde{f}_{r,\alpha,i})_{x_jx_l}\tilde{f}_{k,\beta,j}\tilde{f}_{k,\gamma,l}(\tilde{X}_\tau)I_{\beta,\tau}I_{\gamma,\tau}d\tau]I_{\alpha,s}dW_s^r \nonumber
\end{align}
for $0<t\leq \frac 1n$.
Furthermore, the terms involving $\tilde{X}_{\tau}$  on the right-hand side of \eqref{ME} can also be expanded at $X_0$ using the It\^o formula. Then, the stochastic modified equation is obtained by matching its coefficients with those of the numerical method. 

Without loss of generality, we first give the truncated stochastic modified equation of the symplectic Euler method for 
\eqref{Generalequ} (i.e. $m=1$ and ${\bf d}=2$), from which the general result follows by analogous arguments. 
Comparing \eqref{ME} with \eqref{pn} and \eqref{qn} on $0<t\le \frac 1n$ yields $\tilde{f}_{0,v,1}=f+\frac{1}{2}a_1'a+\frac{1}{2}a_2'b$, $\tilde{f}_{0,v,2}=g+\frac{1}{2}b_1'a+\frac{1}{2}b_2'b$, $\tilde{f}_{1,v,1}=a$, $\tilde{f}_{1,v,2}=b$.
Note that for $\tilde{X}_{t,i}$ with $i=1,2$ and $0<t\le \frac 1n$, the coefficient of $\int_0^tW_sdW_s$ in the expansion form of \eqref{ME} at $X_0$ is 
\begin{align*}
    \tilde{f}_{1,(1),i}(X_0)+\sum_{j=1}^d (\tilde{f}_{1,v,i})_{x_j}\tilde{f}_{1,v,j}(X_0).
\end{align*}
Thus, it follows that
$$\tilde{f}_{1,(1),1}+(\tilde{f}_{1,v,1})_1'\tilde{f}_{1,v,1}+(\tilde{f}_{1,v,1})_2'\tilde{f}_{1,v,2}=2a_1'a,$$ 
$$\tilde{f}_{1,(1),2}+(\tilde{f}_{1,v,2})_1'\tilde{f}_{1,v,1}+(\tilde{f}_{1,v,2})_2'\tilde{f}_{1,v,2}=2b_1'a,$$ which further show that 
$\tilde{f}_{1,(1),1} = a_1'a-a_2'b$ and $\tilde{f}_{1,(1),2} = b_1'a-b_2'b$.
Plugging these coefficients into \eqref{generalME}, we have the truncated stochastic modified equation as follows 
\begin{align} \label{MEofsym}
        \tilde{P}_t^n =& \ P_0+\int_0^t (f+\frac{1}{2}a_1'a+\frac{1}{2}a_2'b)(\tilde{P}_s^n,\tilde{Q}_s^n)ds+\int_0^ta(\tilde{P}_s^n,\tilde{Q}_s^n)dW_s \nonumber \\
        &+ \int_0^t (a_1'a-a_2'b)(\tilde{P}_s^n,\tilde{Q}_s^n)(W_s-W_{\eta_n(s)})dW_s, \nonumber \\
        \tilde{Q}_t^n =& \ Q_0+\int_0^t (g+\frac{1}{2}b_1'a+\frac{1}{2}b_2'b)(\tilde{P}_s^n,\tilde{Q}_s^n)ds+\int_0^t b(\tilde{P}_s^n,\tilde{Q}_s^n)dW_s \nonumber\\
        &+\int_0^t (b_1'a-b_2'b)(\tilde{P}_s^n,\tilde{Q}_s^n)(W_s-W_{\eta_n(s)})dW_s.
\end{align}

Next, we give the truncated stochastic modified equation for the additive noise case. For simplicity, we also take $m=1$ and ${\bf d}=2$ and give the result for the symplectic Euler method.
Using the It\^o formula, we rewrite both \eqref{pnadd} and \eqref{ME} into the representation consisting of multiple It\^o integrals for $0<t\le \frac 1n$. 
Comparing coefficients yields 
$\tilde{f}_{0,v,1}=f$, $\tilde{f}_{0,v,2}=g$, $\tilde{f}_{1,v,1}=a$, $\tilde{f}_{1,v,2}=b$. 
The coefficients of the following multiple integrals are
\begin{align*}
    \int_0^t sds: \quad & \tilde{f}_{0,(0),i}(X_0)+\sum_{j=1}^2 (\tilde{f}_{0,v,i})_{x_j}\tilde{f}_{0,v,j}(X_0)+ \sum_{j=1}^2 (\tilde{f}_{0,(1),i})_{x_j}\tilde{f}_{1,v,j}  \\
    &+\sum_{j,l=1}^2\frac{1}{2}(\tilde{f}_{0,v,i})_{x_jx_l}\tilde{f}_{1,v,j}\tilde{f}_{1,v,l}(X_0),\\
    \int_0^tW_sds:\quad & \tilde{f}_{0,(1),i}(X_0)+\sum_{j=1}^2(\tilde{f}_{0,v,i})_{x_j}\tilde{f}_{1,v,j}(X_0),\\
    \int_0^t sdW_s: \quad & \tilde{f}_{1,(0),i}(X_0), \\
    \int_0^tW_sdW_s: \quad & \tilde{f}_{1,(1),i}(X_0), 
\end{align*}
where we use the fact that $a$ and $b$ are constants.
The term $\sum_{j=1}^2 (\tilde{f}_{0,(1),i})_{x_j}\tilde{f}_{1,v,j}$ in the coefficient of $\int_0^tsds$ arises from the second term on the right-hand side of \eqref{ME}, due to the identity $\int_0^t W_s^2ds = 2\int_0^t\int_0^sW_{\tau}dW_{\tau}ds+\int_0^tsds$.
Thus, we have
$\tilde{f}_{1,(1),1}=0$, $\tilde{f}_{1,(1),2}=0$, $\tilde{f}_{1,(0),1}=af_1'$, $\tilde{f}_{1,(0),2}=ag_1'$, $\tilde{f}_{0,(1),1}=-bf_2'$, $\tilde{f}_{0,(1),2}=-bg_2'$, and
\begin{align*}
    & \tilde{f}_{0,(0),1}=f_1'f-f_2'g+\frac{1}{2}a^2f_{11}''+\frac{1}{2}b^2f_{22}'', \\
    & \tilde{f}_{0,(0),2} = g_1'f-g_2'g+\frac{1}{2}a^2g_{11}''+\frac{1}{2}b^2g_{22}''.
\end{align*}
Hence, we obtain the truncated stochastic modified equation: 
\begin{align}  \label{MEofsymadd}
        \tilde{P}_t^n =& \ P_0+ \int_0^tf(\tilde{P}_s^n,\tilde{Q}_s^n)ds+aW_t +\int_0^t(f_1'f-f_2'g+\frac{1}{2}a^2f_{11}''+\frac{1}{2}b^2f_{22}'')(\tilde{P}_s^n,\tilde{Q}_s^n)(s-\eta_n(s))ds \nonumber \\
        &+\int_0^taf_1'(\tilde{P}_s^n,\tilde{Q}_s^n)(s-\eta_n(s))dW_s -\int_0^t bf_2'(\tilde{P}_s^n,\tilde{Q}_s^n)(W_s-W_{\eta_n(s)})ds, \nonumber \\
        \tilde{Q}_t^n =& \ Q_0+ \int_0^tg(\tilde{P}_s^n,\tilde{Q}_s^n)ds+bW_t +\int_0^t(g_1'f-g_2'g+\frac{1}{2}a^2g_{11}''+\frac{1}{2}b^2g_{22}'')(\tilde{P}_s^n,\tilde{Q}_s^n)(s-\eta_n(s))ds \nonumber \\
        &+\int_0^tag_1'(\tilde{P}_s^n,\tilde{Q}_s^n)(s-\eta_n(s))dW_s -\int_0^t bg_2'(\tilde{P}_s^n,\tilde{Q}_s^n)(W_s-W_{\eta_n(s)})ds.
\end{align}


\subsection{A new approach for deriving the asymptotic error distribution}
In this subsection, we give the asymptotic error distribution of the truncated stochastic modified equation, 
which is the same as that we obtained for numerical methods in previous sections. 
\begin{thm} \label{modifiedthm}
    Let $\tilde{U}_P^n(t):=\sqrt{n}(\tilde{P}_t^n-P_t)$ and $\tilde{U}_Q^n(t):=\sqrt{n}(\tilde{Q}_t^n-Q_t)$, 
    where  $(P,Q)$ is the solution to \eqref{Generalequ} and $(\tilde{P}^n,\tilde{Q}^n)$ is the truncated strong modified equation of the symplectic Euler method given in \eqref{MEofsym}. Then under \Cref{assumption1}, we have $(\tilde{U}_P^n,\tilde{U}_Q^n)\Rightarrow^{stably} U=(U_P,U_Q)$ in $\mathcal{C}([0,T];\mathbb{R}^2)$ and $U$ satisfies  \eqref{symEulerU}.
\end{thm}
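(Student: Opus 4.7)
The plan is to subtract \eqref{p}--\eqref{q} from \eqref{MEofsym}, scale by $\sqrt{n}$, and show directly that the limit satisfies the linear SDE \eqref{symEulerU}. The main conceptual advantage over the proof of \Cref{mainthm} is that \eqref{MEofsym} already uses continuous, $\{\mathcal{F}_t\}$-adapted integrands evaluated at $(\tilde{P}^n_s,\tilde{Q}^n_s)$, so neither the piecewise-constant delay $\eta_n$ nor the truncated Gaussian $\zeta_k$ appears in the natural drift and diffusion parts. The only $\eta_n$-dependent term is precisely the one that, after normalization by $\sqrt{n}$, will produce the independent Brownian motion $B$ appearing in \eqref{symEulerU}.

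First I would verify that $(\tilde{P}^n,\tilde{Q}^n)\to^{\mathbb{P}}(P,Q)$ in $\mathcal{C}([0,T];\mathbb{R}^2)$: the last integral in \eqref{MEofsym}, viewed as a perturbation of the reference SDE \eqref{p}--\eqref{q}, has $L^2$-norm bounded via the Burkholder--Davis--Gundy inequality by $\bigl(\int_0^T\mathbb{E}[(W_s-W_{\eta_n(s)})^2]\,ds\bigr)^{1/2}\to 0$, so \Cref{propmain} (or \Cref{rmk1}) yields the convergence together with the strong error bound $\sup_{t\in[0,T]}\mathbb{E}\|\tilde{X}^n_t-X_t\|^q=O(n^{-q/2})$ for every $q\geq 1$, where $\tilde{X}^n:=(\tilde{P}^n,\tilde{Q}^n)$ and $X:=(P,Q)$. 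Next I would decompose
\begin{align*}
    \tilde{U}_P^n(t) = J_1^n(t)+J_2^n(t)+J_3^n(t),
\end{align*}
where $J_1^n,J_2^n$ are the drift and $dW_s$ differences and $J_3^n(t):=\sqrt{n}\int_0^t(a_1'a-a_2'b)(\tilde{P}_s^n,\tilde{Q}_s^n)(W_s-W_{\eta_n(s)})\,dW_s$. A first-order Taylor expansion at $(P_s,Q_s)$ converts $J_1^n$ and $J_2^n$ into the drift and $W$-driven parts of \eqref{symEulerU} (with $U_P^n,U_Q^n$ replaced by $\tilde{U}_P^n,\tilde{U}_Q^n$); the quadratic remainders are of order $\sqrt{n}\,\|\tilde{X}^n_s-X_s\|^2$, which vanish in $\mathcal{C}([0,T];\mathbb{R})$ by the strong error bound above and the polynomial growth furnished by \Cref{assumption1}.

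For $J_3^n$, set $Y_n(t):=\sqrt{n}\int_0^t(W_s-W_{\eta_n(s)})\,dW_s$, so that $J_3^n(t)=\int_0^t(a_1'a-a_2'b)(\tilde{P}^n_s,\tilde{Q}^n_s)\,dY_n(s)$. By \cite[Theorem 2.1]{Rootzen1980}, $Y_n\Rightarrow\frac{1}{\sqrt{2}}B$ for a Brownian motion $B$ independent of $W$; combining with $(\tilde{P}^n,\tilde{Q}^n)\to^{\mathbb{P}}(P,Q)$, the continuity of all limit processes, and the goodness of $\{Y_n\}$ (via \cite[Theorem 2.7]{Burdzy2010} and \cite[Lemma 3.2]{KurtzProtter1991b}) gives
\begin{align*}
    J_3^n \Rightarrow \frac{1}{\sqrt{2}}\int_0^{\cdot}(a_1'a-a_2'b)(P_s,Q_s)\,dB_s
\end{align*}
in $\mathcal{C}([0,T];\mathbb{R})$, which is exactly the $B$-driven term in the first line of \eqref{symEulerU}. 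An identical treatment of $\tilde{U}_Q^n$ produces the contribution $\frac{1}{\sqrt{2}}\int_0^{\cdot}(b_1'a-b_2'b)(P_s,Q_s)\,dB_s$. Applying \Cref{rmk1} to the linear SDE satisfied by $(\tilde{U}_P^n,\tilde{U}_Q^n)$ yields weak convergence to $U$ solving \eqref{symEulerU}, and \Cref{lemmastable}(ii) (with $W$ generating $\mathcal{F}$) upgrades this to stable convergence.

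The main obstacle is securing the joint weak convergence of the triple $((\tilde{P}^n,\tilde{Q}^n),W,Y_n)$ in the product Skorohod topology with $((P,Q),W,\frac{1}{\sqrt{2}}B)$, which is the hinge for passing $J_3^n$ to its limit; this follows because $(W,Y_n)$ already converges jointly inside the proof of \cite[Theorem 2.1]{Rootzen1980}, $(\tilde{P}^n,\tilde{Q}^n)$ converges in probability, and continuity of the limits lifts componentwise Skorohod convergence to joint convergence. Compared with \Cref{mainthm}, the overall argument is substantially more transparent: because the drift and diffusion integrands in $J_1^n$ and $J_2^n$ are evaluated at $(\tilde{P}^n_s,\tilde{Q}^n_s)$ rather than at $(\tilde{P}^n_{\eta_n(s)},\tilde{Q}^n_{\eta_n(s)})$, there is no need to expand $\tilde{X}_s^n-\tilde{X}_{\eta_n(s)}^n$ explicitly or to track the truncated-Gaussian errors $\zeta_k-\xi_k$ that dominate the bookkeeping in \Cref{mainthm}.
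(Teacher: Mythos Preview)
Your proposal is correct and follows essentially the same route as the paper: subtract \eqref{p}--\eqref{q} from \eqref{MEofsym}, Taylor-expand the drift and $dW_s$ differences to extract the linear terms in $(\tilde{U}_P^n,\tilde{U}_Q^n)$, bound the quadratic remainders via the $O(n^{-1/2})$ strong error, show that the remaining $\sqrt{n}\int_0^t(a_1'a-a_2'b)(\tilde{P}_s^n,\tilde{Q}_s^n)(W_s-W_{\eta_n(s)})\,dW_s$ converges to $\frac{1}{\sqrt{2}}\int_0^\cdot(a_1'a-a_2'b)(P_s,Q_s)\,dB_s$, and conclude with \Cref{rmk1} and \Cref{lemmastable}(ii). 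The only cosmetic difference is that the paper bundles the Taylor remainders and the $B$-producing term into a single $\tilde{T}_1^n$, whereas you split them into separate $J_i^n$'s and spell out the Rootz\'en/goodness/Kurtz--Protter argument for $J_3^n$ more explicitly.
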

\begin{proof}
By Remark \ref{rmk1}, comparing \eqref{Generalequ} and \eqref{MEofsym} leads to $(\tilde{P}^n,\tilde{Q}^n)\to^{\mathbb{P}}(P,Q)$ as $n\to \infty$. Moreover,
\begin{align*}
    \tilde{U}_P^n(t) =& \ \sqrt{n}\int_0^t [(f+\frac{1}{2}a_1'a+\frac{1}{2}a_2'b)(\tilde{P}_s^n,\tilde{Q}_s^n)-(f+\frac{1}{2}a_1'a+\frac{1}{2}a_2'b)(P_s,Q_s)]ds\\
    &+\sqrt{n}\int_0^t[a(\tilde{P}_s^n,\tilde{Q}_s^n)-a(P_s,Q_s)]dW_s +\sqrt{n} \int_0^t (a_1'a-a_2'b)(\tilde{P}_s^n,\tilde{Q}_s^n)(W_s-W_{\eta_n(s)})dW_s \\
    =& \int_0^t [(f+\frac{1}{2}a_1'a+\frac{1}{2}a_2'b)_1'(P_s,Q_s)\tilde{U}_P^n(s)+(f+\frac{1}{2}a_1'a+\frac{1}{2}a_2'b)_2'(P_s,Q_s)\tilde{U}_Q^n(s)]ds \\ &+\int_0^t[a_1'(P_s,Q_s)\tilde{U}_P^n(s)+a_2'(P_s,Q_s)\tilde{U}_Q^n(s)]dW_s+ \tilde{T}_1^n(t)
\end{align*} 
where $\tilde{T}_1^n \Rightarrow \frac{1}{\sqrt{2}}\int_0^{\cdot} (a_1'a-a_2'b)(P_s,Q_s)dB_s$ as $n\to \infty$, which relies on the fact that
\begin{align*}
    \sup_{t\in[0,T]}\big(\mathbb{E}[\|\tilde{X}_t^n-X_t\|^{2q}]\big)^{1/2q} \le C(\frac{1}{n})^{1/2}
\end{align*}  
for any $q\ge1$. Similarly, we have 
\begin{align*}
    \tilde{U}_Q^n(t) =&\int_0^t [(g+\frac{1}{2}b_1'a+\frac{1}{2}b_2'b)_1'(P_s,Q_s)\tilde{U}_P^n(s)+(g+\frac{1}{2}b_1'a+\frac{1}{2}b_2'b)_2'(P_s,Q_s)\tilde{U}_Q^n(s)]ds \\ &+\int_0^t[b_1'(P_s,Q_s)\tilde{U}_P^n(s)+b_2'(P_s,Q_s)\tilde{U}_Q^n(s)]dW_s+ \tilde{T}_2^n(t)
\end{align*}
where $\tilde{T}_2^n \Rightarrow \frac{1}{\sqrt{2}}\int_0^{\cdot} (b_1'a-b_2'b)(P_s,Q_s)dB_s$ as $n\to \infty$.
Thus, by \Cref{lemmastable} and \Cref{rmk1}, we conclude that $(\tilde{U}_P^n,\tilde{U}_Q^n)\Rightarrow^{stably} U$ in $\mathcal C([0,T];\mathbb R^2)$ and $U$ satisfies \eqref{symEulerU}.
\end{proof}

\begin{thm} \label{modifiedthmadd}
    Let $\tilde{U}_P^n(t):=n(\tilde{P}_t^n-P_t)$ and $\tilde{U}_Q^n(t):=n(\tilde{Q}_t^n-Q_t)$, where $(P,Q)$ is the solution to \eqref{Generaladdequ2d} with $d=1$ and $(\tilde{P}^n,\tilde{Q}^n)$ is the truncated strong modified equation of the symplectic Euler method given in \eqref{MEofsymadd}. 
    Then under \Cref{assumption3}, we have $(\tilde{U}_P^n,\tilde{U}_Q^n)\Rightarrow^{stably} U=(U_P,U_Q)$ in $\mathcal C ([0,T];\mathbb R^2)$ and $U$ satisfies \eqref{generalUadd} with $\theta=1$. 
\end{thm}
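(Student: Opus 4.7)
The plan is to follow the template established in the proof of \Cref{modifiedthm}, adapting each step to the additive-noise modified equation \eqref{MEofsymadd}. First, I would observe that all three correction integrals appended to the unmodified drift and diffusion in \eqref{MEofsymadd} carry either a factor $(s-\eta_n(s))$ (which is deterministically bounded by $1/n$) or $(W_s-W_{\eta_n(s)})$ (whose $L^2$ norm is of order $n^{-1/2}$). Combined with the polynomial-growth bounds in \Cref{assumption3} and standard moment estimates for $(\tilde P^n,\tilde Q^n)$, an application of the Burkholder--Davis--Gundy and H\"older inequalities as in \Cref{propconverge0} shows that these three correction terms vanish in $L^2$ uniformly on $[0,T]$. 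Comparing \eqref{MEofsymadd} with \eqref{Generaladdequ2d} and invoking \Cref{propmain} together with \Cref{rmk1} then yields $(\tilde P^n,\tilde Q^n,W)\to^{\mathbb{P}}(P,Q,W)$ in $\mathcal{C}([0,T];\mathbb{R}^3)$.

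Next I would subtract \eqref{Generaladdequ2d} from \eqref{MEofsymadd}, multiply by $n$, and perform a first-order Taylor expansion of $f(\tilde P^n_s,\tilde Q^n_s)-f(P_s,Q_s)$ at $(P_s,Q_s)$. This decomposes $\tilde U_P^n$ into a leading linear part $\int_0^t[f_1'(P_s,Q_s)\tilde U_P^n(s)+f_2'(P_s,Q_s)\tilde U_Q^n(s)]ds$, a quadratic Taylor remainder, and $n$ times the three modified-equation corrections. The Taylor remainder is controlled by the defining strong-order property of the truncated modified equation, namely $\sup_{t\in[0,T]}\bigl(\mathbb{E}\|\tilde X^n_t-X_t\|^{2q}\bigr)^{1/(2q)}\le C/n$ for all $q\ge 1$; squaring, multiplying by $n^2$ and using polynomial growth of $D^2 f$ shows this remainder is negligible. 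The same argument handles $\tilde U_Q^n$ with $g$ in place of $f$.

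For the three correction contributions I would invoke the weak limits \eqref{nsds}, \eqref{nsdW} and \eqref{nWds} already established inside the proof of \Cref{mainthmadd}, combined with \cite[Lemma~3.2]{KurtzProtter1991b} and the continuity of the integrands evaluated along $(\tilde P^n,\tilde Q^n)$. Concretely, writing $F:=f_1'f-f_2'g+\tfrac{1}{2}a^2 f_{11}''+\tfrac{1}{2}b^2 f_{22}''$, the $(s-\eta_n(s))ds$ correction converges to $\tfrac{1}{2}\int_0^\cdot F(P_s,Q_s)ds$; the $af_1'(s-\eta_n(s))dW_s$ correction converges to $\tfrac{1}{2}\int_0^\cdot af_1'(P,Q)dW_s-\tfrac{\sqrt{3}}{6}\int_0^\cdot af_1'(P,Q)dB_s$; and the $-bf_2'(W_s-W_{\eta_n(s)})ds$ correction converges to $-\tfrac{1}{2}\int_0^\cdot bf_2'(P,Q)dW_s-\tfrac{\sqrt{3}}{6}\int_0^\cdot bf_2'(P,Q)dB_s$. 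The goodness required for the Kurtz--Protter lemma follows exactly as in \cite[Theorem~2.7]{Burdzy2010}, which was the justification used previously in the excerpt.

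Collecting these contributions and applying \Cref{rmk1} yields $(\tilde U_P^n,\tilde U_Q^n,W)\Rightarrow(U_P,U_Q,W)$, where $(U_P,U_Q)$ solves a linear SDE with coefficients matching \eqref{generalUadd} specialised to $\theta=1$, $d=1$; \Cref{lemmastable}(ii) then upgrades this to the stable convergence $(\tilde U_P^n,\tilde U_Q^n)\Rightarrow^{stably}(U_P,U_Q)$. I expect the main bookkeeping obstacle to be verifying that the three limiting correction terms, once added to the linear Taylor drift and diffusion, reproduce exactly the drift and martingale coefficients prescribed by \eqref{generalUadd} at $\theta=1$; in particular the $dW$ contributions from the $(s-\eta_n(s))dW_s$ and $(W_s-W_{\eta_n(s)})ds$ pieces appear with opposite signs and must be combined with the correct factor $\tfrac{1}{2}$, while the corresponding $dB$ terms both appear with coefficient $-\tfrac{\sqrt{3}}{6}$ and must be kept distinct. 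All remaining ingredients (uniform moment bounds, goodness of the good sequences $n\int_0^\cdot(s-\eta_n(s))dW_s$ and $n\int_0^\cdot(W_s-W_{\eta_n(s)})ds$, and uniqueness of the linear limiting SDE) have already been invoked in \Cref{mainthmadd} and \Cref{modifiedthm}, so no new probabilistic tool is required.
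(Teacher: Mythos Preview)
Your proposal has a genuine gap in the treatment of the third correction term $-n\int_0^t bf_2'(\tilde P_s^n,\tilde Q_s^n)(W_s-W_{\eta_n(s)})\,ds$. You claim it converges to $-\tfrac12\int_0^\cdot bf_2'(P,Q)\,dW-\tfrac{\sqrt3}{6}\int_0^\cdot bf_2'(P,Q)\,dB$ by writing it as $\int bf_2'(\tilde X^n_s)\,dY_n(s)$ with $Y_n(t)=n\int_0^t(W_s-W_{\eta_n(s)})\,ds$ and invoking Kurtz--Protter. But the sequence $\{Y_n\}$ in its finite-variation form is \emph{not} good: its total variation $n\int_0^T|W_s-W_{\eta_n(s)}|\,ds$ has expectation of order $\sqrt n$, and an explicit check with $H_n(s)=W_s$ gives $\int_0^t W_s\,dY_n(s)\to \tfrac12 t+\int_0^t W_s\,d(\tfrac12 W+\tfrac{\sqrt3}{6}B)_s$, so a Wong--Zakai--type drift appears. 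The goodness assertion used after \eqref{nWds} applies only once $Y_n$ is rewritten via Fubini as the martingale $n\int_0^\cdot(\eta_n(u)+1/n-u)\,dW_u$ with bounded quadratic variation, and that rewriting is legitimate only when the integrand is constant on grid intervals. Your integrand $bf_2'(\tilde P_s^n,\tilde Q_s^n)$ is not.

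This is precisely why the paper first expands $bf_2'(\tilde P_s^n,\tilde Q_s^n)$ around $(\tilde P_{\eta_n(s)}^n,\tilde Q_{\eta_n(s)}^n)$: the frozen leading term can be handled as you describe, while the increments $\tilde P_s^n-\tilde P_{\eta_n(s)}^n\approx a(W_s-W_{\eta_n(s)})$ and $\tilde Q_s^n-\tilde Q_{\eta_n(s)}^n\approx b(W_s-W_{\eta_n(s)})$ generate the extra terms $n\int_0^t abf_{12}''(\cdot)(W_s-W_{\eta_n(s)})^2\,ds$ and $n\int_0^t b^2f_{22}''(\cdot)(W_s-W_{\eta_n(s)})^2\,ds$, which by \eqref{nW^2ds} contribute $\tfrac12\int_0^t(abf_{12}''+b^2f_{22}'')(P,Q)\,ds$ to the limit. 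Without them your $ds$-drift would be $\tfrac12(f_1'f-f_2'g)+\tfrac14 a^2f_{11}''+\tfrac14 b^2f_{22}''$, whereas \eqref{generalUadd} at $\theta=1$ requires $\tfrac12(f_1'f-f_2'g)+\tfrac14 a^2f_{11}''-\tfrac12 abf_{12}''-\tfrac14 b^2f_{22}''$; the discrepancy $-\tfrac12(abf_{12}''+b^2f_{22}'')$ is exactly what the omitted expansion supplies. The other two corrections are handled correctly in your outline, since their integrators $n\int_0^\cdot(s-\eta_n(s))\,ds$ and $n\int_0^\cdot(s-\eta_n(s))\,dW_s$ do satisfy the goodness condition directly.
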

\begin{proof}
    Comparing \eqref{Generaladdequ2d} and \eqref{MEofsymadd}, we have $(\tilde{P}^n,\tilde{Q}^n)\to^{\mathbb{P}}(P,Q)$ as $n\to \infty$. Moreover, we obtain
    \begin{align*}
        \tilde{U}_P^n(t) = &\ n\int_0^t[f(\tilde{P}_s^n,\tilde{Q}_s^n)-f(P_s,Q_s)]ds \\
        &+n\int_0^t(f_1'f-f_2'g+\frac{1}{2}a^2f_{11}''+\frac{1}{2}b^2f_{22}'')(\tilde{P}_s^n,\tilde{Q}_s^n)(s-\eta_n(s))ds \\
        &+n\int_0^taf_1'(\tilde{P}_s^n,\tilde{Q}_s^n)(s-\eta_n(s))dW_s -n\int_0^t bf_2'(\tilde{P}_s^n,\tilde{Q}_s^n)(W_s-W_{\eta_n(s)})ds.
    \end{align*}
    For the last term of $\tilde{U}_P^n(t)$, we see that
    \begin{align*}
        n\int_0^t bf_2'(\tilde{P}_s^n,\tilde{Q}_s^n)(W_s-W_{\eta_n(s)})ds &= n\int_0^tbf_2'(\tilde{P}_{\eta_n(s)}^n,\tilde{Q}_{\eta_n(s)}^n)(W_s-W_{\eta_n(s)})ds \\
        &\quad +n\int_0^t abf_{12}''(\tilde{P}_{\eta_n(s)}^n,\tilde{Q}_{\eta_n(s)}^n)(W_s-W_{\eta_n(s)})^2ds \\
        &\quad +n\int_0^t b^2f_{22}''(\tilde{P}_{\eta_n(s)}^n,\tilde{Q}_{\eta_n(s)}^n)(W_s-W_{\eta_n(s)})^2ds + R^n(t)
    \end{align*}
    where $R^n\Rightarrow 0$ in $\mathcal{C}([0,T];\mathbb{R})$.
    In conclusion, by \eqref{nsds}-\eqref{nW^2ds} and \cite[Lemma 3.2]{KurtzProtter1991b}, we have
    \begin{align*}
        \tilde{U}_P^n(t) = \int_0^t[f_1'(P_s,Q_s)\tilde{U}_P^n+f_2'(P_s,Q_s)\tilde{U}_Q^n]ds+\tilde{M}_1^n(t),
    \end{align*}
    where
    \begin{align*}
        \tilde{M}_1^n\Rightarrow &\ \frac{1}{2}\int_0^{\cdot} (f_1'f-f_2'g)(P_s,Q_s)ds+\frac{1}{4}\int_0^{\cdot}(a^2f_{11}''-b^2f_{22}''-2abf_{12}'')(P_s,Q_s)ds \\
        &+\frac{1}{2}\int_0^{\cdot}(af_1'-bf_2')(P_s,Q_s)dW_s-\frac{\sqrt{3}}{6}\int_0^{\cdot}(af_1'+bf_2')(P_s,Q_s)dB_s.
    \end{align*}
    Similarly,
    \begin{align*}
        \tilde{U}_Q^n(t) = \int_0^t[g_1'(P_s,Q_s)\tilde{U}_P^n+g_2'(P_s,Q_s)\tilde{U}_Q^n]ds+\tilde{M}_2^n(t),
    \end{align*}
    where
    \begin{align*}
        \tilde{M}_2^n \Rightarrow &\ \frac{1}{2}\int_0^{\cdot} (g_1'f-g_2'g)(P_s,Q_s)ds+\frac{1}{4}\int_0^{\cdot}(a^2g_{11}''-b^2g_{22}''-2abg_{12}'')(P_s,Q_s)ds \\
    &+\frac{1}{2}\int_0^{\cdot}(ag_1'-bg_2')(P_s,Q_s)dW_s-\frac{\sqrt{3}}{6}\int_0^{\cdot}(ag_1'+bg_2')(P_s,Q_s)dB_s.
    \end{align*}
    Thus, by \Cref{lemmastable} and \Cref{rmk1}, we have $(\tilde{U}_P^n,\tilde{U}_Q^n)\Rightarrow^{stably} U$ in $\mathcal C([0,T]; \mathbb R^2)$ and $U$ satisfies \eqref{generalUadd} with $\theta=1$.
\end{proof}
\begin{remark}
Our results demonstrate that the truncated stochastic modified equation provides a new approach for deriving the asymptotic error distribution. This approach is more straightforward, as the truncated stochastic modified equation is formulated in terms of integrals with continuous-time adapted integrands, which facilitates the application of weak limit theorems.
In fact, denoting by $\tilde{X}^n$ the constructed truncated stochastic modified equation, by $X^n$ the continuous numerical solution, and by $X$ the exact solution, we have
\begin{align*}
    n^p(X^n-X)&=n^p(X^n-\tilde{X}^n)+n^p(\tilde{X}^n-X),
\end{align*}
where $p$ is the strong convergence order of the numerical method. 
Since $n^p(X^n-\tilde{X}^n)\Rightarrow 0$, which follows from the fact that the truncated stochastic modified equation converges to the numerical method with order higher than $p$, we can 
derive the asymptotic error distribution by calculating the limiting distribution of $n^p(\tilde{X}^n-X)$. 

\end{remark}

\section{Numerical experiments} \label{Sectionsix}
In this section, we consider the stochastic Kubo oscillator and the linear stochastic oscillator as two concrete examples, which correspond to SHSs with multiplicative and additive noise, respectively, and perform numerical experiments on the normalized Hamiltonian deviation,  demonstrating the superiority of the symplectic methods.



\subsection{Stochastic Kubo oscillator}
We consider the stochastic Kubo oscillator
\begin{align} \label{Kubo}
    d \begin{pmatrix} P_t \\ Q_t
\end{pmatrix}=\begin{pmatrix}
    -Q_t \\
    P_t
\end{pmatrix}dt +\begin{pmatrix}
    -Q_t \\ P_t
\end{pmatrix} \circ dW_t, \quad t\in (0,T],
\end{align}
with initial value $(P_0,Q_0)=(0,1)$, whose solution admits the explicit expression $P_t = -\sin(t+W_t)$ and $Q_t = \cos(t+W_t)$. The Hamiltonian function is $H(P,Q)=\frac{1}{2}P^2+\frac{1}{2}Q^2\equiv 1$.

Let $(P_{\text{E},t}^n, Q_{\text{E},t}^n)$ and $(P_{\text{sym},t}^n, Q_{\text{sym},t}^n)$ denote the continuous numerical solutions of the Euler and symplectic methods \eqref{generalsym} for \eqref{Kubo}, respectively. Their corresponding asymptotic error distributions are denoted by $U_{\text{E}}:=(U_{\text{E},P}, U_{\text{E},Q})$ and $U_{\text{sym}}:=(U_{\text{sym},P}, U_{\text{sym},Q})$.
It follows from \cite[Theorem 3.2]{JacodProtter1998} and \Cref{mainthmmulti} that 
\begin{align*}
    &\begin{cases} 
        dU_{\text{E},P}(t)=(-\frac{1}{2}U_{\text{E},P}(t)-U_{\text{E},Q}(t))\,dt-U_{\text{E},Q}(t)\,dW_t+\frac{1}{\sqrt{2}}P_t\,dB_t, \quad U_{\text{E},P}(0)=0, \\
        dU_{\text{E},Q}(t)=(U_{\text{E},P}(t)-\frac{1}{2}U_{\text{E},Q}(t))\,dt+U_{\text{E},P}(t)\,dW_t+\frac{1}{\sqrt{2}}Q_t\,dB_t, \quad U_{\text{E},Q}(0)=0, \\
    \end{cases} \\ 
    &\begin{cases} 
        dU_{\text{sym},P}(t)=(-\frac{1}{2}U_{\text{sym},P}(t)-U_{\text{sym},Q}(t))\,dt-U_{\text{sym},Q}(t)\,dW_t+\frac{2\theta-1}{\sqrt{2}}P_t\,dB_t, \quad U_{\text{sym},P}(0)=0, \\
        dU_{\text{sym},Q}(t)=(U_{\text{sym},P}(t)-\frac{1}{2}U_{\text{sym},Q}(t))\,dt+U_{\text{sym},P}(t)\,dW_t-\frac{2\theta-1}{\sqrt{2}}Q_t\,dB_t, \quad U_{\text{sym},Q}(0)=0.
    \end{cases}  
\end{align*}
Solving the above equations gives $U_{\text{E},P}(t) = -\frac{1}{\sqrt{2}}B_t\sin(t+W_t)$, $U_{\text{E},Q}(t) = \frac{1}{\sqrt{2}} B_t\cos(t+W_t)$, and
\begin{align*}
    \begin{cases}
        U_{\text{sym},P}(t) = -\frac{2\theta-1}{\sqrt{2}}\cos(t+W_t)\int_0^t \sin(2s+2W_s)dB_s +\frac{2\theta-1}{\sqrt{2}}\sin(t+W_t)\int_0^t\cos(2s+2W_s)dB_s, \\
        U_{\text{sym},Q}(t) = -\frac{2\theta-1}{\sqrt{2}}\sin(t+W_t)\int_0^t \sin(2s+2W_s)dB_s -\frac{2\theta-1}{\sqrt{2}}\cos(t+W_t)\int_0^t\cos(2s+2W_s)dB_s.
    \end{cases}
\end{align*}

With these preparations, we give the specific expression for the limiting distribution of the normalized Hamiltonian deviation. 
In fact, applying Theorem \ref{thmdeltaH} yields $\sqrt{n}(H(P_t^n,Q_t^n)-H(P_t,Q_t)) \Rightarrow^{stably} P_tU_P(t)+Q_tU_Q(t)$ in $\mathcal C([0,T];\mathbb R)$. 
Combining this with 
\begin{align*}
    &P_tU_{\text{E},P}(t)+Q_tU_{\text{E},Q}(t) = \frac{1}{\sqrt{2}}B_t,\\
    &P_tU_{\text{sym},P}(t)+Q_tU_{\text{sym},Q}(t) = -\frac{2\theta-1}{\sqrt{2}}\int_0^t \cos(2s+2W_s)dB_s,
\end{align*}
we theoretically obtain 
\begin{align*}
    &\lim_{n\to\infty} \sqrt{n}\,\mathbb E[H(P_{\text{E},t}^n,Q_{\text{E},t}^n)-H(P_t,Q_t)] =0,\\
    &\lim_{n\to\infty}\sqrt{n}\,\mathbb E[H(P_{\text{sym},t}^n,Q_{\text{sym},t}^n)-H(P_t,Q_t)] = 0,
\end{align*}
which is consistent with the property that the weak convergence orders of both the Euler method and the symplectic methods are 1. Furthermore, we also have
\begin{align*}
    &\lim_{n\to\infty} n\mathbb E\big[(H(P_{\text{E},t}^n,Q_{\text{E},t}^n)-H(P_t,Q_t))^2\big] =\frac{1}{2}t, \\
    &\lim_{n\to\infty} n \mathbb E\big[(H(P_{\text{sym},t}^n,Q_{\text{sym},t}^n)-H(P_t,Q_t))^2\big] = \frac{(2\theta-1)^2}{2}\int_0^t\tilde{\mathbb E}\big[(\cos(2s+2W_s))^2\big]ds\le \frac{1}{2}t.
\end{align*}

Below, we present numerical experiments to verify these results. The Monte Carlo method is employed to estimate the expectation in all experiments. 
In \Cref{fig1:total}, we use 10000 sample paths with final time $T=4$ and discretization parameters $n=2,5,10,20,25,50,100,200$ in \Cref{fig1:total}(A) and $n=5,10,20,25,50,100$ in \Cref{fig1:total}(B), illustrating that the symplectic Euler method behaves better than the Euler method even after taking limit for $n$.
\Cref{fig2:total} presents results by 50,000 sample paths over the time interval $[0,10]$, computing $n\mathbb E[(H_t^n-H_t)^2]$ with $n=100$ in \Cref{fig2:total}(A) and $n=1000$ in \Cref{fig2:total}(B).
We observe that, compared with the Euler method, the symplectic methods indeed exhibit a noticeably slower growth in $n\mathbb E[(H_t^n-H_t)^2]$ over time. Furthermore, our results show that as $\theta$ approaches $0.5$, the limit tends to $0$, which is consistent with our theoretical result.
\begin{figure}[htbp]
  \centering
  \begin{subfigure}[b]{0.4\textwidth} 
    \centering
    \includegraphics[width=\textwidth]{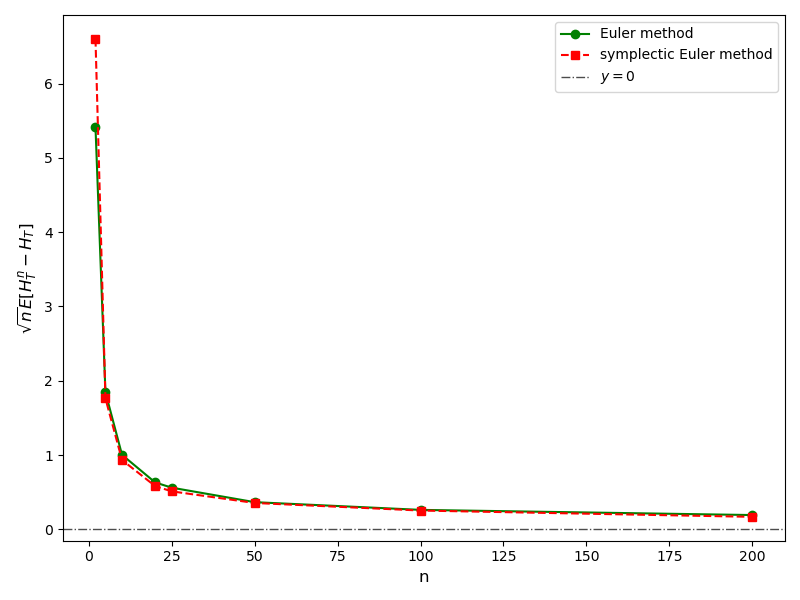} 
    \caption{$T=4$}
    \label{fig1:sub1}
  \end{subfigure}
  \hspace{2mm}
  \begin{subfigure}[b]{0.4\textwidth} 
    \centering
    \includegraphics[width=\textwidth]{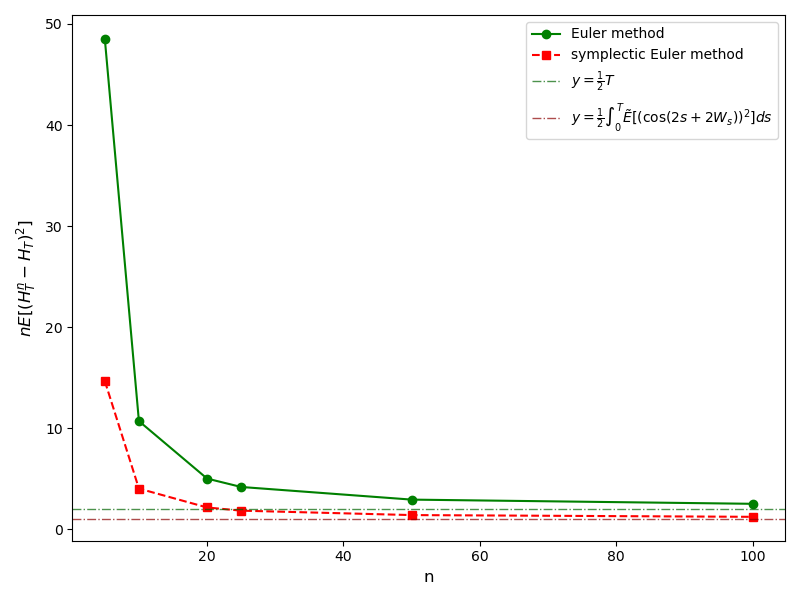}
    \caption{$T=4$}
    \label{fig1:sub2}
  \end{subfigure}
  \caption{$\sqrt{n}\mathbb E[H_T^n-H_T]$ and $n\mathbb E[(H_T^n-H_T)^2]$ for fixed $T$.}
  \label{fig1:total}
\end{figure}
\begin{figure}[htbp]
  \centering
  \begin{subfigure}[]{0.4\textwidth} 
    \centering
    \includegraphics[width=\textwidth]{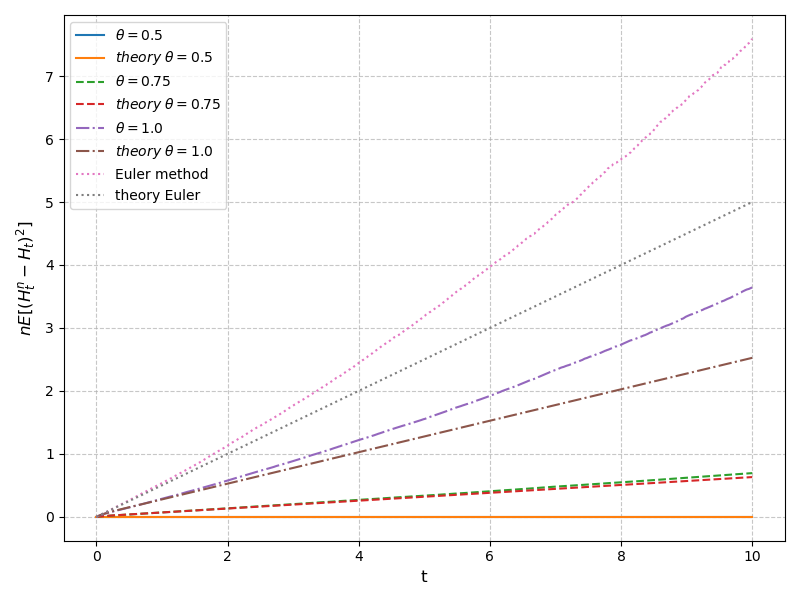} 
    \caption{$n=100$}
    \label{fig2:sub1}
  \end{subfigure}
  \hspace{2mm}
  \begin{subfigure}[]{0.4\textwidth} 
    \centering
    \includegraphics[width=\textwidth]{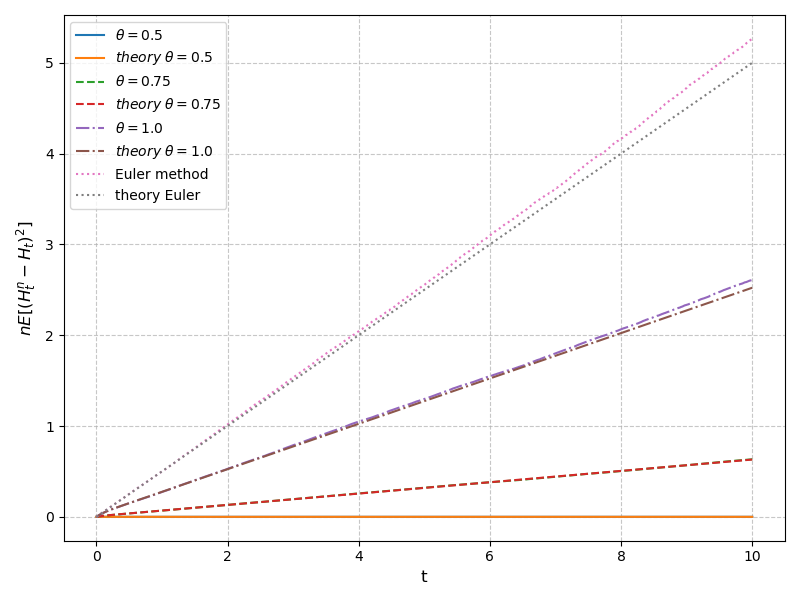}
    \caption{$n=1000$}
    \label{fig2:sub2}
  \end{subfigure}
  \caption{$n\mathbb E[(H_t^n-H_t)^2]$ for fixed $n$.}
  \label{fig2:total}
\end{figure}

\subsection{Linear stochastic oscillator}
Before presenting our results, we first give the asymptotic error distribution $U_{\text{E}}:=(U_{\text{E},P},U_{\text{E},Q})$ of Euler method $(P_{\text{E},t}^n,Q_{\text{E},t}^n)$ for \eqref{Generaladdequ2d} with $d=1$,
\begin{align*}
        U_{\text{E},P}(t) =& \int_0^t[f_1'(P_s,Q_s)U_{\text{E},P}(s)+f_2'(P_s,Q_s)U_{\text{E},Q}(s)]ds \\
        &-\frac{1}{2}\int_0^t (f_1'f+f_2'g)(P_s,Q_s)ds-\frac{1}{4}\int_0^t(a^2f_{11}''+b^2f_{22}''+2abf_{12}'')(P_s,Q_s)ds \\
        &-\frac{1}{2}\int_0^t(af_1'+bf_2')(P_s,Q_s)dW_s-\frac{\sqrt{3}}{6}\int_0^t(af_1'+bf_2')(P_s,Q_s)dB_s,\\
        U_{\text{E},Q}(t) =& \int_0^t[g_1'(P_s,Q_s)U_{\text{E},P}(s)+g_2'(P_s,Q_s)U_{\text{E},Q}(s)]ds \\
        &-\frac{1}{2}\int_0^t (g_1'f+g_2'g)(P_s,Q_s)ds-\frac{1}{4}\int_0^t(a^2g_{11}''+b^2g_{22}''+2abg_{12}'')(P_s,Q_s)ds \\
        &-\frac{1}{2}\int_0^t(ag_1'+bg_2')(P_s,Q_s)dW_s-\frac{\sqrt{3}}{6}\int_0^t(ag_1'+bg_2')(P_s,Q_s)dB_s.
\end{align*}
Then, in this subsection, we consider the linear stochastic oscillator
\begin{align} \label{linear}
    d \begin{pmatrix} P_t \\ Q_t
\end{pmatrix}=\begin{pmatrix}
    -Q_t \\
    P_t
\end{pmatrix}dt +\begin{pmatrix}
    1 \\ 0
\end{pmatrix} dW_t, \quad t\in (0,T],
\end{align}
with initial value $(P_0,Q_0)=(0,0)$, whose solution admits the explicit expression $P_t = \int_0^t \cos(t-s)dW_s$ and $Q_t = \int_0^t \sin(t-s)dW_s$. 
In this case, the asymptotic error distributions $U_{\text{E}}$ and $U_{\text{sym}}$ satisfy
\begin{align*}
    &\begin{cases} 
        dU_{\text{E},P}(t)=-U_{\text{E},Q}(t)dt+\frac{1}{2}P_tdt, \quad U_{\text{E},P}(0)=0, \\
        dU_{\text{E},Q}(t)=U_{\text{E},P}(t)dt+\frac{1}{2}Q_tdt-\frac{1}{2}dW_t-\frac{\sqrt{3}}{6}dB_t, \quad U_{\text{E},Q}(0)=0, \\
    \end{cases} \\[1ex]
    &\begin{cases} 
        dU_{\text{sym},P}(t)=-U_{\text{sym},Q}(t)dt+(\theta-\frac{1}{2})P_tdt, \quad U_{\text{sym},P}(0)=0, \\
        dU_{\text{sym},Q}(t)=U_{\text{sym},P}(t)dt-(\theta-\frac{1}{2})Q_tdt+(\theta-\frac{1}{2})dW_t-\frac{\sqrt{3}}{6}dB_t, \quad U_{\text{sym},Q}(0)=0.
    \end{cases}  
\end{align*}

With these preparations, we proceed to analyze the normalized Hamiltonian deviation corresponding to the Hamiltonian function $H(P,Q)=\frac{1}{2}P^2+\frac{1}{2}Q^2$ of the original equation \eqref{linear}. By Theorem \ref{thmdeltaHadd}, we have $n(H(P_t^n,Q_t^n)-H(P_t,Q_t)) \Rightarrow^{stably} P_tU_P(t)+Q_tU_Q(t)$ in $\mathcal{C}([0,T],\mathbb{R})$. It follows from the It\^o formula that
\begin{align*}
    &d(P_tU_{\text{E},P}(t)+Q_tU_{\text{E},Q}(t))=\frac{1}{2}(P_t^2+Q_t^2)dt+U_{\text{E},P}(t)dW_t-\frac{1}{2}Q_tdW_t-\frac{\sqrt{3}}{6}Q_tdB_t, \\
    &d(P_tU_{\text{sym},P}(t)+Q_tU_{\text{sym},Q}(t))=(\theta-\frac{1}{2})(P_t^2-Q_t^2)dt+U_{\text{sym},P}(t)dW_t+(\theta-\frac{1}{2})Q_tdW_t-\frac{\sqrt{3}}{6}Q_tdB_t.
\end{align*}
Thus, we theoretically obtain
\begin{align*}
    &\lim_{n\to\infty}n\mathbb E[H(P_{\text{E},t}^n,Q_{\text{E},t}^n)-H(P_t,Q_t)] = \frac{1}{2}\int_0^t\tilde{\mathbb E}[(P_s^2+Q_s^2)]ds=\frac{1}{4}t^2,\\
    &\lim_{n\to\infty}n\mathbb E[H(P_{\text{sym},t}^n,Q_{\text{sym},t}^n)-H(P_t,Q_t)] =(\theta- \frac{1}{2})\int_0^t\tilde{\mathbb E}[(P_s^2-Q_s^2)]ds=\frac{1}{4}(\theta-\frac{1}{2})(1-\cos(2t)).
\end{align*}

We then present numerical experiments to verify these results. In \Cref{fig3:total}, we use $2\times10^8$ sample paths with final time $T=4$ and discretization parameters $n=5,10,20,25,40,50,100$ in \Cref{fig3:total}(A) and $n=2,5,10,20,25,40,50,100$ in \Cref{fig3:total}(B).
The simulations for \Cref{fig4:total} employ $2\times10^5$ sample paths on $[0,20]$ with $n=100$ for \Cref{fig4:total}(A), and $2\times 10^8$ sample paths on $[0,6]$ with $n=40$ for \Cref{fig4:total}(B)-(D), with $\theta=1,0.75,0.1$ for (B), (C) and (D), respectively.
\begin{figure}[htbp] 
  \centering
  \begin{subfigure}[b]{0.4\textwidth} 
    \centering
    \includegraphics[width=\textwidth]{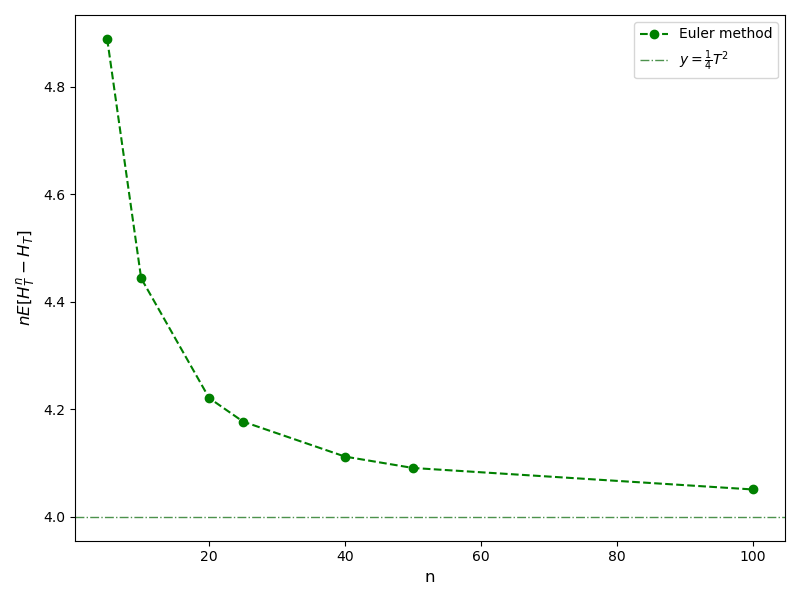} 
    \caption{$T=4$}
    \label{fig3:sub1}
  \end{subfigure}
  \hspace{2mm}
  \begin{subfigure}[b]{0.4\textwidth} 
    \centering
    \includegraphics[width=\textwidth]{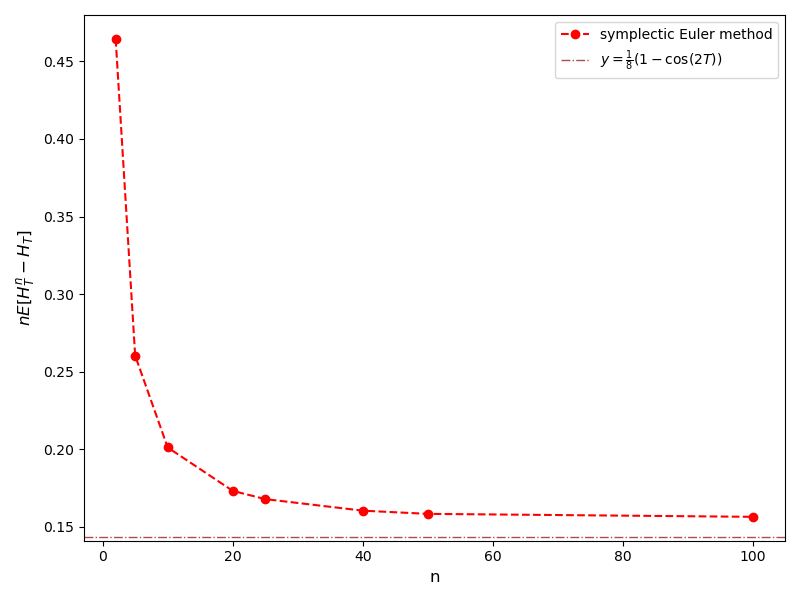}
    \caption{$T=4$}
    \label{fig3:sub2}
  \end{subfigure}
  \caption{$n\mathbb E[H_T^n-H_T]$ for fixed $T$.}
  \label{fig3:total}
\end{figure}
\begin{figure}[htbp] 
  \centering
  \begin{subfigure}[b]{0.36\textwidth} 
    \centering
    \includegraphics[width=\textwidth]{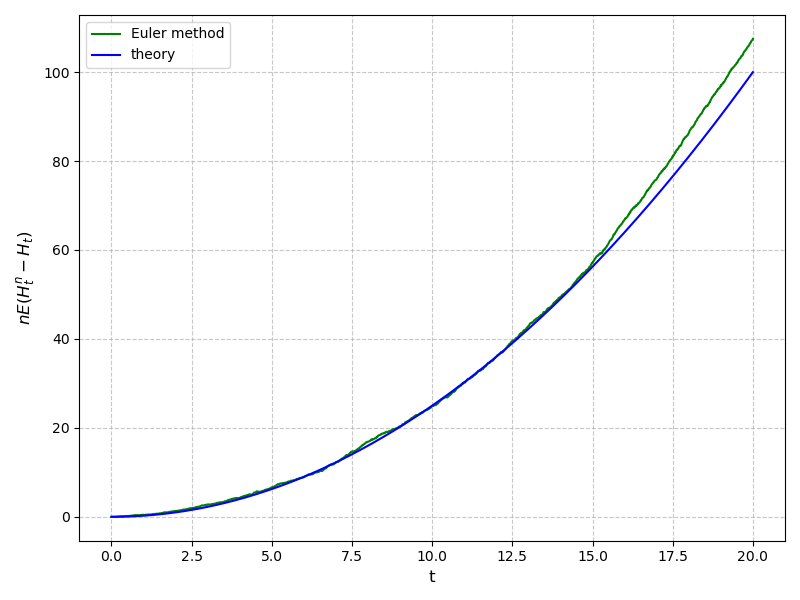} 
    \caption{$n=100$}
    \label{fig4:sub1}
  \end{subfigure}
  \hspace{2mm}
  \begin{subfigure}[b]{0.36\textwidth} 
    \centering
    \includegraphics[width=\textwidth]{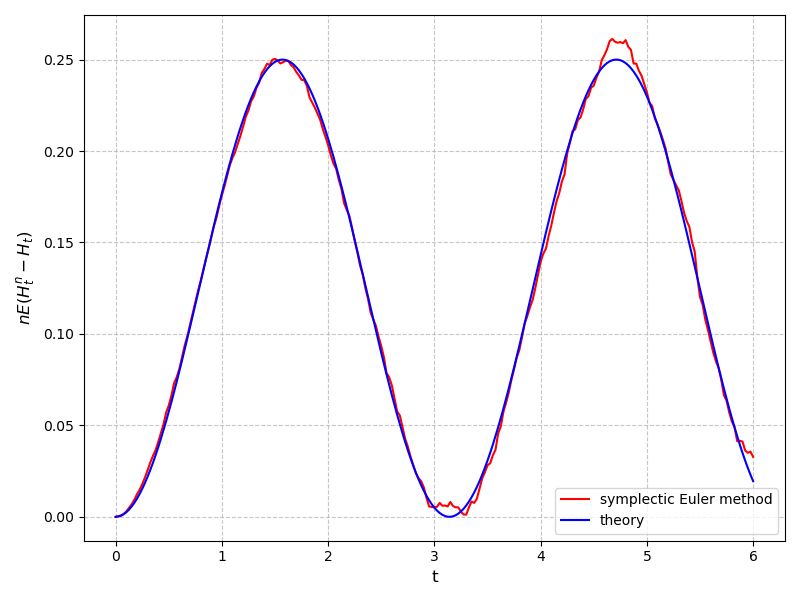}
    \caption{$n=40$}
    \label{fig4:sub2}
  \end{subfigure}
  \begin{subfigure}[b]{0.36\textwidth} 
    \centering
    \includegraphics[width=\textwidth]{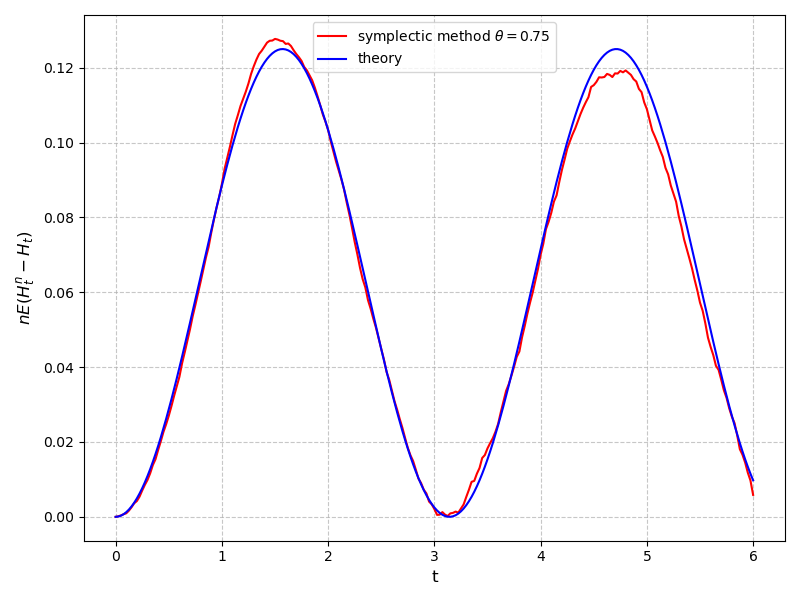}
    \caption{$n=40$}
    \label{fig4:sub3}
  \end{subfigure}
  \hspace{2mm}
  \begin{subfigure}[b]{0.36\textwidth} 
    \centering
    \includegraphics[width=\textwidth]{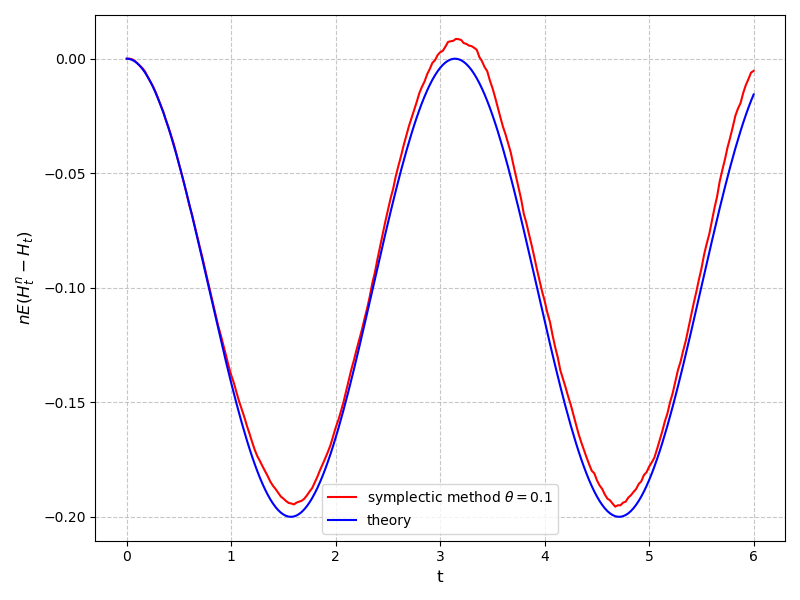}
    \caption{$n=40$}
    \label{fig4:sub4}
  \end{subfigure}
  \caption{$n\mathbb E[H_t^n-H_t]$ for fixed $n$.}
  \label{fig4:total}
\end{figure}

Our numerical results verify that symplectic methods can better simulate the original Hamiltonians even in the limit as $n$ tends to infinity. Specifically, in the limit as $n$ approaches infinity, the normalized Hamiltonian deviation grows quadratically with time for the Euler method, whereas for symplectic methods it oscillates and remains bounded.

	\bibliographystyle{plain}
	\bibliography{references}
		
\end{document}